\newtheorem{theo}{Theorem}
\newtheorem{lem}{Lemma}[section]
\newtheorem{defi}[lem]{Definition}
\newtheorem{cor}[lem]{Corollary}
\newtheorem{prop}[lem]{Proposition}
\newtheorem{rmk}[lem]{Remark}
\newcommand{\eps}{\varepsilon}
\newcommand{\R}{\mathbb{R}}
\newcommand{\N}{\mathbb{N}}
\newcommand{\Mm}{\mathcal{M}^+}
\renewcommand{\o}{\overline}
\newcommand{\p}{\partial}
\numberwithin{equation}{section}
\DeclareMathOperator{\dive}{div}
\DeclareMathOperator{\grad}{grad}
\newcommand{\rd}{\mathrm{d}}
\renewcommand{\u}{{\mathfrak u}}
\thanks{AMS Subject Classifications:  28A33, 35Q92, 49Q20, 58B20}
\begin{document}
 
\title{ A new optimal transport distance \\
 on the space of finite Radon measures }

\date{}
\maketitle     
 
\vspace{ -1\baselineskip}

{\small
\begin{center}
{\sc Stanislav Kondratyev}
\\ 
CMUC, Department of Mathematics, University of Coimbra\\ 
3001-501 Coimbra, Portugal 
\\[10pt]
{\sc L\'eonard Monsaingeon}
\\
CAMGSD, Instituto Superior T\'ecnico, University of Lisbon\\
1049-001 Lisboa, Portugal
\\[10pt]
{\sc Dmitry Vorotnikov}
\\ 
CMUC, Department of Mathematics, University of Coimbra\\
3001-501 Coimbra, Portugal \\[10pt]

\end{center}
}

\numberwithin{equation}{section}
\allowdisplaybreaks

 \smallskip

 \begin{quote}
\footnotesize
{\bf Abstract.}  
We introduce a new optimal transport distance between nonnegative
finite Radon measures with possibly different masses. The construction is based
on non-conservative continuity equations and a corresponding modified 
Benamou-Brenier formula. We establish various topological and geometrical properties of
the resulting metric space, derive some formal Riemannian structure, and develop
differential calculus following F. Otto's approach. Finally, we apply these ideas to
identify a model of animal dispersal proposed by MacCall and Cosner as a gradient
flow in our formalism and obtain new long-time convergence results.

\end{quote}

%%%%%%%%%%%%%%%%%%%%%%%%%%%%%%5
\section{Introduction} 
In the last decades the theory of Monge-Kantorovich optimal transportation problems has seen spectacular developments and provided new powerful tools, deep insights, and numerous applications in functional analysis, partial differential equations, and geometry. The list of references on this topic is steadily growing and we refer to \cite{villani08oldnew} for an extended bibliography. Central ingredients to the theory are the Kantorovich-Rubinstein-Wasserstein distances and their variants, usually defined between probability measures. In their seminal paper \cite{JKO}, Jordan, Kinderlehrer and Otto showed that certain diffusion equations can be interpreted as gradient flows with respect to the Wasserstein metric structure. This was later pushed further by Otto \cite{otto01}, who showed that the space of probability measures can in fact be endowed with a formal Riemannian structure induced by the (quadratic) Wasserstein distance. This differential structure then led to new connections between long time 
convergence of measure dynamics and functional inequalities via, e.g., entropy dissipation methods, Talagrand and logarithmic Sobolev inequalities, Bakry-\'Emery strategies. See \cite{CMcV03,villani03topics} for a review, and also Section~\ref{section:Riemannian_structure} for a brief discussion.

One of the main restrictions of the Wasserstein distances is that they are limited to measures with fixed identical masses, and the theory requires uniform tightness (control of decay at infinity via the $p$-moments).
Recently, some efforts \cite{FG,PR13properties,PR14generalized} were made to construct new optimal transport distances between measures with different masses, and some non-mass preserving reaction-diffusion systems were interpreted as gradient flows \cite{CL15,LM13}.

In this paper we introduce a new distance on the set of nonnegative finite Radon measures by means of a modified Benamou-Brenier formula. 
Our approach allows for mass variations and does not require tightness or decay conditions such as finite moments.
The classical Benamou-Brenier formula \cite{AGS06,BenamouBrenier00,villani03topics} has an interpretation that the squared quadratic Wasserstein distance $\mathcal{W}^2_2(\rho_0,\rho_1)$ between two probability measures $\rho_0$ and $\rho_1$ (with finite second moments) is the minimum of the Lagrangian action of the kinetic energy during all possible ways of transporting the original distribution $\rho_0$ of moving particles to the target one $\rho_1$ via \emph{continuity equations} $\p_t\rho+\dive(\rho \mathbf v)=0$.  In a similar spirit, our distance has two interpretations: a mechanical one through motion of charged particles (described later on in Section \ref{tdg}), and a biological one through fitness-driven dispersal of organisms described below. For both points of view the crucial role is played by the \emph{non-conservative continuity equation}
\begin{equation*}
\label{eq:continuity-in}
\partial_t\rho+\dive(\rho\nabla u)=\rho u,
\end{equation*}
which allows for mass variations through the reaction term $\rho u$ appearing in the right-hand side.
Biologically, $\rho(t,x)$ can be viewed as the time evolution of the spatial density of living organisms, and $u(t,x)$ as an intrinsic characteristic of population called \emph{fitness} (cf. \cite{cosner05,cos13}). 
The fitness manifests itself as a growth rate, and simultaneously affects the dispersal, as the species move along the gradient towards the most favorable environment. The equilibrium $u\equiv 0$ is called the \emph{ideal free distribution} \cite{FC69,fr72}, since no net movement of individuals occurs in this case. Our (squared) distance is the minimum of the Lagrangian action of the total energy, which is the sum of the kinetic energy $\rho|\nabla u|^2$ and of the potential energy $\rho|u|^2$ representing deviation from the ideal free distribution.

The advantages of our distance are that it possesses a rich underlying geometric structure and is easy to handle heuristically by the optimal transport intuition. It metrizes the narrow convergence of measures, and is lower-semicontinuous with respect to the weak-$*$ convergence.  We prove that the metric space under consideration is a complete geodesic space. The compactly supported measures are dense in this space. The Lipschitz curves (in particular, the geodesics) in this space have a clear characterization. Our distance endows the space of finite Radon measures with a formal Riemannian structure, and we are able to introduce a first- and second-order calculus \emph{\`{a} la Otto}. 

The fitness-driven dispersal model suggested in \cite{mc90,cosner05} and studied in \cite{CW13} (see also \cite{ccl08}) turns out to be a gradient flow in our formalism. This allows us to show that the solutions to this problem exponentially converge to the ideal free distribution. The convergence itself was proven in \cite{CW13} by contradiction and thus without any rate. Related fitness-driven two-animal models were investigated in \cite{ccl13,ltw14}.  In forthcoming papers \cite{KMV16-1,KMV16-2} we study an ecological model for several interacting animal populations by observing that it is also a gradient flow with respect to our structure. 
We also refer the reader to \cite{carrillo2012structured,evers2015mild,gwiazda2010nonlinear,gwiazda2010structured,hille2009embedding,ulikowska2012age} and references therein for applications of metric structures in spaces of measures (and, in particular, of the bounded-Lipschitz distance), e.g. in the context of population dynamics, pedestrian flows, and Markov chains.

For simplicity we restricted here to the quadratic cost $\rho (|\nabla u|^2+|u|^2)$, leading to the aforementioned formal Riemannian structure in the spirit of Otto.
One could otherwise choose costs of the form $\rho |\nabla u|^p,\rho|u|^q$ for different exponents $p,q>1$ and construct a whole family of distances $d_{p,q}$, but those would not enjoy the same fashionable Riemannian structure (one should then rather speak of tangent \emph{cones}).
This is similar to the dynamic formulation of the Wasserstein distances $\mathcal W_p$ of order $p$ \cite{AGS06,villani03topics}, and the gradient flows with suitable driving entropies can involve the $p$-Laplacian operator as in \cite{agueh2005existence}.
This might be applied to identify different nonlinear reaction-diffusion and population models as gradient flows with respect to some $d_{p,q}$ distances, yet those metrics can be useful in various other applications involving quantities of variable mass.

In \cite{dolbeault_nazaret_new_2009} the authors constructed a new class of pseudo-distances on the space of non-negative Radon measures via modified Benamou-Brenier formulas, and as a result some techniques are similar to the ones here. They employed \emph{conservative continuity equations} but with nonlinear mobilities. On the other hand, a  \emph{non-conservative} modifed Benamou-Brenier formula (different from ours) appears in \cite{PR13properties}. More specifically, a class of distances $W_p$ on the space of non-negative Radon measures  is constructed in  \cite{PR13properties,PR14generalized} using perturbations of the original measures in order to obtain measures of equal mass, and consequent minimization of  the sum of the classical Kantorovich-Wasserstein optimal transport cost and of the price of the perturbations which is measured with the help of the total variation distance.  These distances, exactly as ours, metrize the narrow convergence of measures. The distance $W_1$ coincides with the 
bounded-Lipschitz distance, and the distance $W_2$ between absolutely continuous measures can be calculated by minimizing a Lagrangian in the manner of Benamou-Brenier with the energy $|h|+\rho |\mathbf{v}|^2$, where $\partial_t\rho+\dive(\rho\mathbf{v})=h$.

The paper is organized as follows: The new distance is defined in Section~\ref{section:metric_space}, where we establish the aforementioned topological properties and characterize Lipschitz curves and geodesics. Section \ref{section:Riemannian_structure} is devoted to the Riemannian formalism and differential calculus {\it \`a la Otto}. We introduce the notion of \emph{trajectory geodesics} allowing us to develop the second-order calculus via some Hamilton-Jacobi equations, and also present some explicit and illustrative computations for one-point measures. In Section~\ref{section:animals} we exploit this Riemannian formalism to identify the fitness-driven ideal-free distribution model \cite{mc90,cosner05,CW13} as a gradient flow and retrieve new long-time convergence results. Finally, we opted for moving several auxiliary results, including a new entropy-entropy production inequality, to the Appendix (Section~\ref{section:appendix}).
% 

%%%%%%%%%%%%%%%%%%%%%%%%%%%%%%%%%%%%%%%%%%%%%%%%%%%%%%%%%%%%%%%%%%%%%%%%%%%%%%%%%%%%%%%%%%%%%%%%%%%%%%%5

\subsection*{Note during final preparation.}
After finalization of this article we became aware of the parallel and completely independent works \cite{peyre_1_2015,LMS_big_2015}, whose preprints appeared almost simultaneously with ours.
In these studies, the same distance is constructed, but with different approaches and techniques, and different types of results are proved.
We also refer to \cite{LMS_small,peyre_2} for some extensions and applications.

In \cite{peyre_1_2015}, the authors considered the continuity equations $\partial_t\rho+\dive(\rho\mathbf{v})=\rho u$ for \emph{independent} velocity fields  and reaction rates, and the dynamical cost is then minimized among all couples $(u,\mathbf{v})$.
It is not difficult to check at least formally that any minimizer $(u,\mathbf{v})$ must satisfy $\mathbf{v}=\nabla u$.
Thus the distance constructed therein is the same as ours, even though we restrict ourselves to potentials $(u,\nabla u)$ from the beginning.
\pagebreak
%%%%%%%%%%%%%%%%%%%%%%%%%%%%%%%%%%%%%%%%%%%%%%%%%%%%%%%%%%%%%%%%%%%%%%%%%%%%%%%%%%%%%%%%%%%%%%%%%%%%%%%5
\subsection*{Notation and conventions}
\begin{itemize}
 \item 
Throughout the whole paper and unless otherwise specified we will always denote by
$$
\Mm=\mathcal{M}^+_b(\R^d)
$$
the set of nonnegative finite Radon measures in $\R^d$.
\item
We use the following notation for sets of functions:
\begin{center}
\begin{tabular}{ll}
$\mathcal{C}_b$: & bounded continuous with $\|\phi\|_{\infty}=\sup |\phi|$; \\
$\mathcal{C}_b^1$: & bounded $\mathcal{C}^1$ with bounded first derivatives;\\
$\mathcal{C}^\infty_c$: & smooth compactly supported;\\
$\mathcal{C}_0$: & continuous and decaying at infinity;\\
$\operatorname{Lip}:$ & bounded and Lipschitz with $\|\phi\|_{\operatorname{Lip}}=\|\nabla\phi\|_{\infty}+\|\phi\|_{\infty}$.
\end{tabular}
\end{center}
\item
Given a sequence $\{\rho^k\}_{k\in\N}\subset \Mm$ and $\rho\in\Mm$ we say that:
\begin{enumerate}[(i)]
 \item $\rho^k$ \emph{converges narrowly} to $\rho$ if there holds
 $$
 \forall\,\phi\in \mathcal{C}_b(\R^d):\qquad \lim\limits_{k\to\infty}\int_{\R^d}\phi(x) \rd \rho^k(x)=\int_{\R^d}\phi(x) \rd \rho(x).
 $$
\item $\rho^k$ \emph{converges weakly-$*$}  to $\rho$ if there holds
 $$
 \forall\,\phi\in \mathcal{C}_0(\R^d):\qquad \lim\limits_{k\to\infty}\int_{\R^d}\phi(x) \rd \rho^k(x)=\int_{\R^d}\phi(x) \rd \rho(x).
 $$
\end{enumerate}

\item
Given a measure $\rho_0\in \Mm$ and a continuous function $F:\R^d\to\R^d$, the measure $F\#\rho_0$ is the pushforward of $\rho_0$ by $F$, determined by $$
\int_{\R^d}\phi\,\rd(F\#\rho_0)=\int_{\R^d}\phi\circ F\,\rd\rho_0
$$
for all test functions $\phi\in \mathcal C_b(\R^d)$.
\item
For curves $t\in[0,1]\mapsto \rho_t\in \Mm$ we write $\rho\in\mathcal{C}_{w}([0,1];\Mm)$ for the continuity with respect to the narrow topology.
\item
Given a nontrivial measure $\rho\in \Mm$ we will denote by $H^1(\rd \rho)$ the Hilbert space obtained by completion of the quotient by the seminorm kernel of the space  $\mathcal{C}^1_b(\R^d)$ equipped with the Hilbert seminorm
$$
\|\phi\|_{H^1(\rd\rho)}^2=\int_{\R^d}\left(|\nabla\phi(x)|^2+|\phi(x)|^2\right) \rd \rho(x).
$$

It is not difficult to check by functional analytic tools that the Hilbert space $H^1(\rd \rho)$ can be identified with the set
\begin{multline*}
\Big\{\u=(i(\mathfrak u),j(\mathfrak u))\,|\,i(\mathfrak u)\in L^2(\rd \rho;\R), j(\mathfrak u)\in L^2(\rd \rho;\R^d),\quad \exists \{\phi^k\}\subset \mathcal{C}^1_b(\R^d),\\
\lim\limits_{k\to\infty}\phi^k = i(\mathfrak u)\ \textrm{in}\  L^2(\rd \rho;\R), \lim\limits_{k\to\infty}\nabla \phi^k = j(\mathfrak u)\ \textrm{in}\  L^2(\rd \rho;\R^d)\Big\}.
\end{multline*} 

Note that in general the elements $\mathfrak u\in H^1(\rd \rho)$ are not functions, and $j(\mathfrak u)$ is not the distributional gradient of $i(\mathfrak u)$.
As a matter of fact, neither $i(\mathfrak u)$ nor $j(\mathfrak u)$ are distributions in general, but $i(\mathfrak u)\, \rd \rho$ and $j(\mathfrak u)\, \rd \rho$ are, and only the latter ``products'' will be considered through the paper.
Nevertheless, for the sake of intuition and presentation, we will slightly abuse the notation and simply write $u$ instead of $i(\mathfrak u)$ and $\nabla u$ instead of $j(\mathfrak u)$.
In other words, one should think of elements $\u$ in $H^1(\rd\rho)$ as \emph{couples} $(u,\nabla u)$ with $u\in L^2(\rd\rho;\R)$ and $\nabla u\in L^2(\rd\rho;\R^d)$.
It is worth pointing out that $\nabla u$ does not necessarily represent the derivative of $u$ in any sense unless $\rho$ is smooth enough.
For example, when $\rho=\delta_0$ our space $H^1(\rd\delta_0)$ is isometric to $\R\times\R^d$, and the second ``gradient'' component $j(\u)=\nabla u\in L^2(\rd\delta_0;\R^d)\cong \R^d$ cannot be retrieved from the sole knowledge of $i(\u)=u\in L^2(\rd\delta_0)\cong\R$ by ``differentiating'' $u$. Alternative existing definitions of Sobolev functions with respect to measures (see, e.g., \cite{Amb15,BF03} and the references therein) are less suitable for our purposes.  
Observe that the Hilbert norm in $H^1(\rd \rho)$ coincides with
\begin{multline*}
\|\mathfrak u\|_{H^1(\rd\rho)}^2=\int_{\R^d}\left(|j(\mathfrak u)(x)|^2+|i(\mathfrak u)(x)|^2\right) \rd \rho(x)\\
=\int_{\R^d}\left(|\nabla u(x)|^2+|u(x)|^2\right) \rd \rho(x).
\end{multline*}
Note also that if $u\in L^2(\rd \rho)$ is $C^1$-smooth and bounded, then $\mathfrak u=(u,\nabla u)\in H^1(\rd \rho)$, where $\nabla$ stands now for the classical gradient. 
\item 
Given a narrowly continuous curve $\rho\in \mathcal{C}_{w}([0,1];\Mm)$ we will denote by $L^2(0,1;L^2( \rd \rho_t))$ the Hilbert space obtained by completion of the quotient by the seminorm kernel of the space $\mathcal{C}^1_b((0,1)\times\R^d)$ equipped with the Hilbert seminorm
$$
\|\phi\|_{L^2(0,1;L^2(\rd\rho_t))}^2=\int_0^1\left(\int_{\R^d}|\phi(t,x)|^2 \rd \rho_t(x)\right) \rd t.
$$
By construction, this space is isometric to the space $L^2((0,1)\times \R^d,\rd \mu)$, where the measure $\rd \mu=\rd t \otimes \rd \rho_t\in \Mm((0,1)\times\R^d)$ is defined by disintegration as
$$
\forall \phi\in\mathcal{C}_b((0,1)\times\R^d):\quad 
\iint\limits_{(0,1)\times\R^d}\phi(t,x) \rd \mu(t,x):=\int_0^1\left(\int_{\R^d}\phi(t,x) \rd \rho_t(x)\right) \rd t.
$$
Take any $u\in L^2(0,1;L^2(\rd\rho_t))$ and a sequence $\{\phi^k\}\subset \mathcal{C}^1_b((0,1)\times\R^d)$ converging to $u$ in $L^2((0,1)\times \R^d,\rd \mu)$.
Passing to a subsequence if necessary, we may assume that for a.a. $t\in (0,1)$ there exists the $L^2(\rd\rho_t)$-limit of $\phi^k_t$. The limit $u_t:=\lim\limits_{k\to\infty}\phi^k_t\in L^2(\rd\rho_t)$ 
is well-defined (does not depend on $\phi^k$) for a.a. $t$, and
$$
\|u\|^2_{L^2(0,1;L^2(\rd\rho_t))}=\int_0^1\|u_t\|^2_{L^2(\rd\rho_t)}\rd t.
$$ 

\item
Given a narrowly continuous curve $\rho\in \mathcal{C}_{w}([0,1];\Mm)$ we will denote by $L^2(0,1;H^1( \rd \rho_t))$ the Hilbert space obtained by completion of the quotient by the seminorm kernel of the space $\mathcal{C}^1_b((0,1)\times\R^d)$ equipped with the Hilbert seminorm
$$
\|\phi\|_{L^2(0,1;H^1(\rd\rho_t))}^2=\int_0^1\left(\int_{\R^d}\left(|\nabla\phi(t,x)|^2+|\phi(t,x)|^2\right) \rd \rho_t(x)\right) \rd t.
$$
As above, one can prove that $L^2(0,1;H^1( \rd \rho_t))$  can be identified with the set \begin{multline*}\Big\{\mathfrak u=(i(\mathfrak u),j(\mathfrak u))\,|\,i(\mathfrak u)\in L^2(0,1;L^2( \rd \rho_t)), j(\mathfrak u)\in L^2(0,1;L^2(\rd \rho_t;\R^d)),\\ \exists \{\phi^k\}\subset \mathcal{C}^1_b((0,1)\times\R^d), \lim\limits_{k\to\infty}\phi^k = i(\mathfrak u)\ \textrm{in}\  L^2(0,1;L^2( \rd \rho_t)), \\ \lim\limits_{k\to\infty}\nabla \phi^k = j(\mathfrak u)\ \textrm{in}\  L^2(0,1;L^2(\rd \rho_t;\R^d))\Big\}.\end{multline*} 
One can see that if $\mathfrak u\in L^2(0,1;H^1(\rd\rho_t))$ then $\mathfrak u_t:=(i(\mathfrak  u)_t,j(\mathfrak  u)_t)\in H^1(\rd\rho_t)$ is well-defined for a.e. $t\in (0,1)$, and
$$
\|\mathfrak   u\|^2_{L^2(0,1;H^1(\rd\rho_t))}=\int_0^1\|\mathfrak   u_t\|^2_{H^1(\rd\rho_t)}\rd t.
$$

In the same spirit as above, we will abuse the notation and write $u$ instead of $i(\mathfrak   u)$ and $\nabla u$ instead of $j(\mathfrak   u)$, for any $\mathfrak   u\in L^2(0,1;H^1( \rd \rho_t))$. Then the Hilbert norm in $L^2(0,1;H^1(\rd\rho_t))$ is $$
\|\mathfrak   u\|_{L^2(0,1;H^1(\rd\rho_t))}^2=\int_0^1\left(\int_{\R^d}\left(|\nabla u_t(x)|^2+|u_t(x)|^2\right) \rd \rho_t(x)\right) \rd t.
$$

\item
Given a narrowly continuous curve $\rho\in\mathcal{C}_w([0,1];\Mm)$ and $\mathfrak u\in L^2(0,1;H^1( \rd \rho_t))$ we say that
\begin{equation}
\label{eq:continuity}
\partial_t\rho_t+\dive(\rho_t\nabla u_t)=\rho_t u_t
\end{equation}
is satisfied in the distributional sense $\mathcal{D}'((0,1)\times\R^d)$ if
$$
-\int_0^1\int_{\R^d}\partial_t\phi \,\rd\rho_t(x)\rd t
=
\int_0^1\int_{\R^d}(\nabla\phi\cdot\nabla u_t+\phi u_t)\rd\rho_t(x)\rd t
$$
for all $\phi\in \mathcal{C}^\infty_c((0,1)\times\R^d)$.
We will often refer to \eqref{eq:continuity} as the \emph{non-conservative continuity equation}.
For any such $\rho\in \mathcal{C}_w([0,1];\Mm)$, an easy approximation argument shows that in fact
\begin{equation}
\label{eq:formulation_PDE_t0_t1}
\forall \phi\in \mathcal{C}^1_b(\R^d):\qquad
\int_{\R^d}\phi(\rd\rho_{t}-\rd\rho_{s})=\int_{s}^{t}\int_{\R^d}(\nabla\phi\cdot\nabla u_\tau+\phi u_\tau)\rd\rho_\tau(x) \rd\tau
\end{equation}
holds for all fixed $0\leq s\leq t\leq 1$, which is equivalent to the previous weak formulation if $t\mapsto \rho_t$ is narrowly continuous.
\item
The \emph{bounded-Lipschitz distance} between two measures $\rho_0,\rho_1\in \Mm$ is
$$
d_{BL}(\rho_0,\rho_1)=\sup\limits_{\|\phi\|_{\operatorname{Lip}}\leq 1}\left|\int_{\R^d}\phi(\rd \rho_1-\rd \rho_0)\right|.
$$
Useful properties are that the metric space $(\Mm,d_{BL})$ is complete and that $d_{BL}$ metrizes the narrow convergence on $\Mm$.
These facts are well-known \cite{dud} if we replace $\Mm$ by the space of probability measures.
The reduction of the general case to measures of unit mass is not involved. We remark that these properties may also be indirectly deduced from the results of \cite{PR13properties} and \cite{PR14generalized}. Another useful observation is that the supremum can be restricted to smooth compactly supported functions. This follows from the tightness of a set consisting of two finite Radon measures.
\item Finally, $B_R$ are open balls of radius $R>0$ in $\R^d$ centered at zero, and $C$ is a generic positive constant. 
\end{itemize}
%
%%%%%%%%%%%%%%%%%%%%%%%%%%%%%%%%%%%%%%%%%%%%%%%%%%%%%%%%%%%%%%%%%%%%%%%%%%%%%%%%%%%%%%%%%%%%%%%%%%%%%%%5
\section{The metric space $(\Mm,d)$}
\label{section:metric_space}

%%%%%%%%%%%%%%%%%%%%%%%%%%%%%%%%%%%%%%%%%%%%%%%%%%%%%%%%%%%%%%%%%%%%%%%%%%%%%%%%%%%%%%%%%%%%%%%%
\subsection{Definition and first properties}
\begin{defi} \label{d:metr}
Given two finite Radon measures $\rho_0,\rho_1\in \Mm(\R^d)$ we define
\begin{equation}\label{e:mini}
d^2(\rho_0,\rho_1)=\inf\limits_{\mathcal{A}(\rho_0,\rho_1)} \int_0^1\left(\int_{\R^d}(|\nabla u_t|^2+|u_t|^2)\rd\rho_t\right)\rd t,
\end{equation}
where the admissible set $\mathcal{A}(\rho_0,\rho_1)$ consists of all couples $(\rho_t,\mathfrak u_t)_{t\in [0,1]}$ such that
\begin{equation*}
\left\{ 
\begin{array}{l} 
\rho\in\mathcal{C}_w([0,1];\Mm),\\
\rho|_{t=0}=\rho_0;\quad \rho|_{t=1}=\rho_1,\\
\mathfrak u\in L^2(0,T;H^1(\rd\rho_t)),\\
\p_t\rho_t+\dive(\rho_t\nabla u_t)=\rho_tu_t \quad \mbox{in the distributional sense}.
\end{array}
\right.
\end{equation*}
\end{defi}
\noindent
As already anticipated, we shall prove shortly that
\begin{theo} 
 $d$ is a distance on $\Mm(\R^d)$.
 \label{theo:d_distance}
\end{theo}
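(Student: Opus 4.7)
The plan is to check the four metric axioms in turn, allowing at first that $d$ may take values in $[0,+\infty]$. Non-negativity is built into the definition, and $d(\rho,\rho)=0$ is witnessed by the constant curve $\rho_t\equiv\rho$ with $\mathfrak u_t\equiv 0$, which trivially solves the non-conservative continuity equation.

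Symmetry will follow from time reversal. Given any admissible couple $(\rho_t,\mathfrak u_t)\in\mathcal{A}(\rho_0,\rho_1)$, I would set $\tilde\rho_t:=\rho_{1-t}$ and $\tilde{\mathfrak u}_t:=-\mathfrak u_{1-t}$ (so that $\tilde u_t=-u_{1-t}$ and ``$\nabla \tilde u_t$''$=-\nabla u_{1-t}$ in the $H^1(\rd\rho_{1-t})$-sense). A direct substitution into the weak form \eqref{eq:formulation_PDE_t0_t1} shows that $(\tilde\rho_t,\tilde{\mathfrak u}_t)\in\mathcal{A}(\rho_1,\rho_0)$, while the quadratic integrand $|\nabla u|^2+|u|^2$ is invariant under the sign flip, so the action is preserved.

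For the triangle inequality I would glue curves after a suitable time rescaling. For a curve $(\rho_t,\mathfrak u_t)_{t\in[0,1]}$ from $\rho_0$ to $\rho_1$ with action $A^2$, the reparametrization $\hat\rho_s:=\rho_{s/\lambda}$, $\hat{\mathfrak u}_s:=\lambda^{-1}\mathfrak u_{s/\lambda}$ for $s\in[0,\lambda]$ again satisfies the non-conservative continuity equation (the factor $\lambda^{-1}$ absorbs the Jacobian in time), and has action $A^2/\lambda$; an analogous rescaling on $[\lambda,1]$ converts a second curve from $\rho_1$ to $\rho_2$ with action $B^2$ into one with action $B^2/(1-\lambda)$. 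Concatenating at time $\lambda$ yields a narrowly continuous admissible curve from $\rho_0$ to $\rho_2$, and the choice $\lambda=A/(A+B)$ minimizes $A^2/\lambda+B^2/(1-\lambda)$ to exactly $(A+B)^2$. Taking infima over admissible couples gives $d(\rho_0,\rho_2)\le d(\rho_0,\rho_1)+d(\rho_1,\rho_2)$.

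The genuinely nontrivial axiom is the implication $d(\rho_0,\rho_1)=0\Rightarrow\rho_0=\rho_1$, and this is the step I expect to be the main obstacle. The strategy is to compare $d$ with the bounded-Lipschitz distance, which already metrizes narrow convergence. For any admissible $(\rho_t,\mathfrak u_t)$ and any $\phi\in\mathcal{C}^1_b(\R^d)$ with $\|\phi\|_{\operatorname{Lip}}\le 1$, the identity \eqref{eq:formulation_PDE_t0_t1} together with the pointwise bound $|\nabla\phi\cdot\nabla u_t+\phi u_t|\le\|\phi\|_{\operatorname{Lip}}(|\nabla u_t|^2+|u_t|^2)^{1/2}$ and Cauchy--Schwarz give
\begin{equation*}
\left|\int_{\R^d}\phi\,(\rd\rho_1-\rd\rho_0)\right|\le \|\phi\|_{\operatorname{Lip}}\,A\,\Big(\int_0^1\rho_t(\R^d)\,\rd t\Big)^{1/2},
\end{equation*}
where $A^2$ is the action of the curve. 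I would then control the total mass along the curve by plugging $\phi\equiv 1$ into \eqref{eq:formulation_PDE_t0_t1}, obtaining $m(t):=\rho_t(\R^d)\le m(0)+\int_0^t\sqrt{m(\tau)}\sqrt{\int|u_\tau|^2\rd\rho_\tau}\,\rd\tau$, and a Gronwall-type argument on $\sqrt{m(t)}$ giving $\sup_t\sqrt{m(t)}\le\sqrt{m(0)}+A/2$. Hence
\begin{equation*}
d_{BL}(\rho_0,\rho_1)\le A\bigl(\sqrt{m(0)}+A/2\bigr),
\end{equation*}
and taking an infimum over admissible couples yields $d_{BL}(\rho_0,\rho_1)\le d(\rho_0,\rho_1)(\sqrt{\rho_0(\R^d)}+d(\rho_0,\rho_1)/2)$. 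Since $d_{BL}$ is a genuine metric on $\Mm$, the vanishing of $d$ forces $\rho_0=\rho_1$, completing the proof.
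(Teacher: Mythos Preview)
Your argument is correct and follows essentially the same route as the paper: symmetry by time reversal, the triangle inequality by rescaled concatenation with the optimal splitting parameter $\lambda=A/(A+B)$, and indiscernibility via a comparison with $d_{BL}$ together with a uniform mass bound along admissible curves. Your Gronwall-type control $\sup_t\sqrt{m(t)}\le\sqrt{m(0)}+A/2$ is a slightly sharper variant of the paper's Lemma~\ref{lem:fundamental_dBL_estimate}, which bounds $m_t\le 2(\max\{m_0,m_1\}+E)$ by the same Cauchy--Schwarz mechanism; both lead to the same conclusion.

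The one point you leave open is finiteness: you say you will ``allow at first that $d$ may take values in $[0,+\infty]$'' but never return to rule this out, so as written you have only shown that $d$ is an extended metric. The paper closes this gap explicitly by exhibiting, for any $\nu_0\in\Mm$, the curve $\nu_t=(1-t)^2\nu_0$ with potential $u_t=-2/(1-t)$, $\nabla u_t=0$, which connects $\nu_0$ to $0$ with finite cost $4\,\rd\nu_0(\R^d)$; any two measures can then be joined through $0$ with finite action. You should add this (or an equivalent construction) to complete the proof.
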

\noindent Before going into the proof we need a preliminary result, to be used repeatedly in the sequel:
\begin{lem}[Bounded-Lipschitz and mass estimate]
Let $\rho\in \mathcal{C}_w([0,1];\Mm)$ be a narrowly continuous curve, assume that the non-conservative continuity equation $\partial_t\rho_t+\dive(\rho_t\nabla u_t)=\rho_tu_t$ is satisfied in the distributional sense for some potential $\mathfrak u\in L^2(0,T;H^1(\rd\rho_t))$ with finite energy
$$
E=E[\rho;\mathfrak u]=\|\mathfrak u\|^2_{L^2(0,T;H^1(\rd\rho_t))}=\int_0^1\int_{\R^d}(|\nabla u_t|^2+|u_t|^2)\rd\rho_t(x)\rd t,
$$
and let $M:=2(\max\{m_0,m_1\}+E)$ with $m_i=\rd\rho_i(\R^d)$.
Then the masses are bounded uniformly in time, $m_t=\rd\rho_t(\R^d)\leq M$ and
\begin{equation}
\label{eq:fundamental_dBL_estimate}
\forall\, \phi\in \mathcal{C}^1_b(\R^d):\qquad \left|\int_{\R^d}\phi(\rd\rho_t-\rd\rho_s)\right| \leq (\|\nabla \phi\|_{\infty}+\|\phi\|_{\infty})\sqrt{ME}|t-s|^{1/2}
\end{equation}
for all $0\leq s\leq t\leq 1$.
\label{lem:fundamental_dBL_estimate}
\end{lem}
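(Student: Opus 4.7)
The plan is to prove the two bounds in sequence, treating the mass bound first and then using it together with Cauchy--Schwarz in the integrated weak formulation \eqref{eq:formulation_PDE_t0_t1} to obtain the Hölder-$1/2$ bounded-Lipschitz estimate.

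\textbf{Step 1 (mass bound).} Since $\rho\in\mathcal{C}_w([0,1];\Mm)$ and $\phi\equiv 1\in\mathcal{C}_b(\R^d)$, the mass function $t\mapsto m_t=\rd\rho_t(\R^d)$ is continuous on $[0,1]$, so $M^*:=\sup_{t\in[0,1]}m_t$ is finite. Testing \eqref{eq:formulation_PDE_t0_t1} against the constant function $\phi\equiv 1\in\mathcal{C}^1_b(\R^d)$ gives, for all $0\le s\le t\le 1$,
$$
m_t-m_s=\int_s^t\int_{\R^d}u_\tau\,\rd\rho_\tau(x)\,\rd\tau.
$$
Cauchy--Schwarz in $L^2(\rd\rho_\tau)$ yields $\bigl|\int u_\tau\,\rd\rho_\tau\bigr|\le\sqrt{m_\tau}\,\|u_\tau\|_{L^2(\rd\rho_\tau)}$, and then Cauchy--Schwarz in time gives
$$
m_t\le m_0+\sqrt{M^*}\,\sqrt{t}\,\sqrt{E}\le m_0+\sqrt{M^*E},\qquad m_t\le m_1+\sqrt{M^*E}
$$
(the second inequality by symmetry, integrating from $t$ to $1$). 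Taking the supremum in $t$ and using $\sqrt{M^*E}\le\tfrac{1}{2}(M^*+E)$ gives $M^*\le 2\max\{m_0,m_1\}+E\le M$, proving the uniform mass bound.

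\textbf{Step 2 (bounded-Lipschitz estimate).} For any $\phi\in\mathcal{C}^1_b(\R^d)$, \eqref{eq:formulation_PDE_t0_t1} reads
$$
\int_{\R^d}\phi(\rd\rho_t-\rd\rho_s)=\int_s^t\int_{\R^d}(\nabla\phi\cdot\nabla u_\tau+\phi u_\tau)\,\rd\rho_\tau(x)\,\rd\tau.
$$
Applying Cauchy--Schwarz pointwise in $\R^{d+1}$ together with $\sqrt{a^2+b^2}\le a+b$ for $a,b\ge 0$ bounds the integrand by $(\|\nabla\phi\|_\infty+\|\phi\|_\infty)\sqrt{|\nabla u_\tau|^2+|u_\tau|^2}$. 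Integrating in space via Cauchy--Schwarz in $L^2(\rd\rho_\tau)$,
$$
\int_{\R^d}\sqrt{|\nabla u_\tau|^2+|u_\tau|^2}\,\rd\rho_\tau\le\sqrt{m_\tau}\,\|\mathfrak u_\tau\|_{H^1(\rd\rho_\tau)}.
$$
Finally, Cauchy--Schwarz in $\tau$ together with Step~1 gives
$$
\int_s^t\sqrt{m_\tau}\,\|\mathfrak u_\tau\|_{H^1(\rd\rho_\tau)}\,\rd\tau\le\sqrt{\int_s^t m_\tau\,\rd\tau}\,\sqrt{E}\le\sqrt{ME}\,|t-s|^{1/2}.
$$
Combining these inequalities yields \eqref{eq:fundamental_dBL_estimate}.

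\textbf{Main obstacle.} The estimates are essentially straightforward chains of Cauchy--Schwarz. The only subtle point is the circular-looking self-referential inequality $m_t\le m_0+\sqrt{M^*E}$ used to bound $M^*$; this is handled by taking the supremum and invoking the elementary inequality $\sqrt{M^*E}\le\tfrac12(M^*+E)$ to absorb $M^*$ to the left. Testing against $\phi\equiv 1$ (which is $\mathcal{C}^1_b$ but not compactly supported) is legal thanks to the extended formulation \eqref{eq:formulation_PDE_t0_t1}, already established by the approximation argument mentioned right after \eqref{eq:continuity}.
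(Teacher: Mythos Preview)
Your proof is correct and follows essentially the same approach as the paper's: both first observe that $M^*=\sup_t m_t$ is finite by narrow continuity, then test $\phi\equiv 1$ in \eqref{eq:formulation_PDE_t0_t1} and use Cauchy--Schwarz to obtain the self-referential inequality $M^*\le m_i+\sqrt{M^*E}$, and finally chain Cauchy--Schwarz (space, then time) in \eqref{eq:formulation_PDE_t0_t1} to get the H\"older-$1/2$ estimate. The only differences are cosmetic: the paper derives the bounded-Lipschitz estimate first (in terms of the as-yet-unbounded $m=M^*$) and then bounds $M^*$, whereas you reverse the order; and your use of $\sqrt{M^*E}\le\tfrac12(M^*+E)$ is a slightly cleaner way to close the self-referential inequality than the paper's ``elementary algebra''.
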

\begin{proof}
By narrow continuity of $t\mapsto \rho_t$ the masses $m_t$ are uniformly bounded and $m=\max\limits_{t\in [0,t]} m_t$ is finite.
 Using the Cauchy-Schwarz inequality in \eqref{eq:formulation_PDE_t0_t1} gives
\begin{multline*}
 \left|\int_{\R^d}\phi(\rd \rho_t-\rd \rho_s)\right|
 = \left|\int_{s}^{t}\int_{\R^d}(\nabla\phi\cdot\nabla u_\tau+\phi u_\tau) \rd \rho_\tau \rd \tau\right| \\
  \leq \int_s^t\left(\int_{\R^d}(|\nabla\phi|^2+|\phi|^2)\rd\rho_\tau\right)^{1/2}\left(\int_{\R^d}(|\nabla u_\tau|^2+|u_\tau|^2)\rd\rho_\tau\right)^{1/2} \rd \tau\\
  \leq ( \|\nabla \phi\|_{\infty} + \|\phi\|_{\infty}) \sqrt{m}\cdot |t-s|^{1/2}\left(\int_0^1\int_{\R^d}(|\nabla u_\tau|^2+|u_\tau|^2)\rd\rho_\tau \rd\tau\right)^{1/2}\\
 \leq (\|\nabla \phi\|_{\infty}+\|\phi\|_{\infty})\sqrt{mE}|t-s|^{1/2},
\end{multline*}
and it is enough to estimate $m\leq M= 2(\max\{m_0,m_1\}+E)$ as in our statement.
Choosing $\phi\equiv 1$ we obtain from the previous estimate $|m_t-m_s|\leq \sqrt{mE}|t-s|^{1/2}$. Let $t_0\in [0,1]$ be any time when $m_{t_0}=m$: choosing $t=t_0$ and $s=0$ we immediately get $m\leq m_0+\sqrt{mE}|t_0-0|^{1/2}\leq m_0+\sqrt{mE}$, and some elementary algebra bounds $m\leq 2(m_0+E)$. Exchanging the roles of $\rho_0,\rho_1$ we get similarly $m\leq 2(m_1+E)$, and finally $m\leq M$.
\end{proof}
\begin{rmk}
The paths and energies can easily be scaled in time as follows: if $(\rho_t,\mathfrak u_t)_{t\in [0,1]}$ connects $\rho_0$ to $\rho_1$  with energy $E[\rho; \mathfrak u]$, then for any $T>0$ the path $(\o\rho_s,\o{ \mathfrak u}_s)=\left(\rho_{\frac{s}{T}},\frac{1}{T}\mathfrak u_{\frac{s}{T}}\right)$ connects $\rho_0$ to $\rho_1$ in time $s\in [0,T]$ with energy
\begin{align*}
E[\o\rho;\o {\mathfrak u}]
& =\int_0^T\left(\int_{\R^d}(|\nabla \o u_s|^2+|\o u_s|)\rd\o\rho_s\right)\rd s\\
& =\frac{1}{T}\int_0^1\left(\int_{\R^d}(|\nabla  u_t|^2+|u_t|)\rd\rho_t\right)\rd t=\frac{1}{T}E[\rho;\mathfrak u].
\end{align*}
\label{rmk:time_scaling}
\end{rmk}

\begin{proof}[Proof of Theorem~\ref{theo:d_distance}]
Let us first show that $d(\rho_0,\rho_1)$ is always finite for any $\rho_0,\rho_1\in \Mm$.
Indeed for any finite measure $\nu_0\in \Mm$ it is easy to see that $\nu_t=(1-t)^2\nu_0$ and $\mathfrak u_t=(u_t,\nabla u_t)=\left(-\frac{2}{1-t},0\right)$ give a narrowly continuous curve $t\mapsto \nu_t\in \Mm$ connecting $\nu_0$ to zero, and an easy computation shows that this path has finite energy $E[\nu;\mathfrak u]=4\rd\nu_0(\R^d)<\infty$ (this curve is actually the geodesic between $\nu_0$ and $0$, see later on the proof of Proposition~\ref{prop:geodesic_rho_to_0} for the details). Rescaling time as in remark~\ref{rmk:time_scaling}, it is then easy to connect any two measures $\rho_0,\rho_1\in \Mm$ in time $t\in [0,1]$ by first connecting $\rho_0$ to $0$ in time $t\in [0,1/2]$ and then connecting $0$ to $\rho_1$ in time $t\in [1/2,1]$ with cost exactly $E=2(4\rd\rho_0(\R^d)+4\rd\rho_1(\R^d))<\infty$.

In order to show that $d$ is really a distance, observe first that the symmetry $d(\rho_0,\rho_1)=d(\rho_1,\rho_0)$ is obvious by definition.

For the indiscernability, assume that $\rho_0,\rho_1\in \Mm$ are such that $d(\rho_0,\rho_1)=0$.
Let $\left(\rho^k_t,\u^k_t\right)_{t\in [0,1]}$ be any minimizing sequence in \eqref{e:mini}, i-e $\lim\limits_{k\to \infty}E[\rho^k;\u^k]=d^2(\rho_0,\rho_1)=0$.
By Lemma~\ref{lem:fundamental_dBL_estimate} we see that the masses $m^k_t=\rd\rho^k_t(\R^d)$ are uniformly bounded, $\sup\limits_{t\in [0,1],\,k\in\N}m^k_t\leq M$.
For any fixed $\phi\in \mathcal{C}^\infty_c(\R^d)$ the fundamental estimate \eqref{eq:fundamental_dBL_estimate} gives
\begin{align*} 
\left|\int_{\R^d}\phi(\rd\rho_1-\rd\rho_0)\right| \leq (\|\nabla\phi\|_{\infty}+\|\phi\|_{\infty})\sqrt{M}\cdot\sqrt{E[\rho^k;\u^k]}.
\end{align*}
Since $\lim\limits_{k\to\infty} E[\rho^k;\u^k]=d^2(\rho_0,\rho_1)=0$ we conclude that $\int_{\R^d} \phi(\rd\rho_1-\rd\rho_0)=0$ for all $\phi\in \mathcal{C}^\infty_c(\R^d)$, thus $\rho_1=\rho_0$ as desired.

As for the triangular inequality, fix any $\rho_0,\rho_1,\nu\in \Mm$ and let us prove that $d(\rho_0,\rho_1)\leq d(\rho_0,\nu)+d(\nu,\rho_1)$.
We can assume that all three distances are nonzero, otherwise the triangular inequality trivially holds by the previous point.
Let now $(\underline{\rho}^k_t,\underline{\u}^k_t)_{t\in[0,1]}$ be a minimizing sequence in the definition of $d^2(\rho_0,\nu)=\lim\limits_{k\to\infty}E[\underline{\rho}^k;\underline \u^k]$, and let similarly $(\overline{\rho}^k_t,\overline{\u}^k_t)_{t\in[0,1]}$ be such that $d^2(\nu,\rho_1)=\lim\limits_{k\to\infty}E[\overline{\rho}^k;\overline \u^k]$.
For fixed $\tau\in (0,1)$ let $\left(\rho_t,\u_t\right)$ be the continuous path obtained by first following $\left(\underline{\rho}^k,\frac{1}{\tau}\underline{\u}^k\right)$ from $\rho_0$ to $\nu$ in time $\tau$, and then following $\left(\overline{\rho}^k,\frac{1}{1-\tau}\o \u^k\right)$ from $\nu$ to $\rho_1$ in time $1-\tau$.
Then $\left(\rho^k_t,\u^k_t\right)_{t\in[0,1]}$ is an admissible path connecting $\rho_0$ to $\rho_1$, hence by definition of our distance and the explicit time scaling in Remark~\ref{rmk:time_scaling} we get that
$$
d^2(\rho_0,\rho_1)\leq E[\rho^k;\u^k]=\frac{1}{\tau}E[\o \rho^k;\o \u^k]+\frac{1}{1-\tau}E[\underline \rho^k;\underline \u^k].
$$
Letting $k\to\infty$ we obtain for any fixed $\tau\in (0,1)$
$$
d^2(\rho_0,\rho_1)\leq \frac{1}{\tau}d^2(\rho_0,\nu)+\frac{1}{1-\tau}d^2(\nu,\rho_1).
$$
Finally choosing $\tau=\frac{d(\rho_0,\nu)}{d(\rho_0,\nu)+d(\nu,\rho_1)}\in (0,1)$ yields
$$
d^2(\rho_0,\rho_1)\leq\frac{1}{\tau}d^2(\rho_0,\nu)+\frac{1}{1-\tau}d^2(\nu,\rho_1)=(d(\rho_0,\nu)+d(\nu,\rho_1))^2
$$
and the proof is complete.
\end{proof}

As an immediate consequence of Lemma~\ref{corm} we get
\begin{cor}
\label{corm} The elements of a bounded set in $(\Mm,d)$ have uniformly bounded mass.
\end{cor}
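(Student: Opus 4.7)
The plan is to reduce the statement directly to the mass bound already established in Lemma~\ref{lem:fundamental_dBL_estimate}. Boundedness of a set $S\subset(\Mm,d)$ means there exists a reference measure $\nu\in\Mm$ and a radius $R>0$ such that $d(\nu,\rho)\leq R$ for every $\rho\in S$. By the definition of $d$ as an infimum over admissible paths, for each $\rho\in S$ we can choose an admissible couple $(\rho^\rho_t,\mathfrak u^\rho_t)_{t\in[0,1]}\in\mathcal A(\nu,\rho)$ whose energy satisfies
\begin{equation*}
E[\rho^\rho;\mathfrak u^\rho]\leq d^2(\nu,\rho)+1\leq R^2+1.
\end{equation*}

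Next I would apply Lemma~\ref{lem:fundamental_dBL_estimate} to this path. The lemma (more precisely, the argument inside its proof) shows that the running mass $m_t=\rd\rho^\rho_t(\R^d)$ satisfies $m:=\sup_{t\in[0,1]} m_t\leq 2(m_0+E[\rho^\rho;\mathfrak u^\rho])$ where $m_0=\rd\nu(\R^d)$ is the mass of the initial endpoint. Since $\rho$ itself appears as the terminal datum $\rho^\rho_{t=1}$ of the path, its mass is controlled by $m$, hence
\begin{equation*}
\rd\rho(\R^d)\leq m\leq 2\bigl(\rd\nu(\R^d)+R^2+1\bigr).
\end{equation*}
The right-hand side is independent of $\rho\in S$, which is exactly the uniform mass bound claimed.

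There is essentially no obstacle here: the only subtlety is that $m$ appears on both sides of the inequality $m\leq m_0+\sqrt{mE}$ that underlies Lemma~\ref{lem:fundamental_dBL_estimate}, but this has already been solved inside the proof of that lemma by elementary algebra on the quadratic inequality, so it may simply be cited. It is worth noting that the choice of the reference measure $\nu$ is irrelevant; one could equivalently take $\nu=0$ (since $d(0,\rho)$ is finite by the computation recalled in the proof of Theorem~\ref{theo:d_distance}), in which case the bound reduces to $\rd\rho(\R^d)\leq 2(R^2+1)$ with $R=\sup_{\rho\in S}d(0,\rho)$.
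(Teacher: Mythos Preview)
Your proof is correct and is exactly the argument the paper has in mind: the corollary is stated as an immediate consequence of Lemma~\ref{lem:fundamental_dBL_estimate}, and you have simply spelled out the details of that implication. The only difference is that you cite the intermediate inequality $m\leq 2(m_0+E)$ from inside the proof of the lemma rather than the symmetric bound $M=2(\max\{m_0,m_1\}+E)$ from its statement, but either version works here.
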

The converse statement is also true, see Corollary \ref{corm1} below. 
Another property, easily following from Remark~\ref{rmk:time_scaling}, is
\begin{lem} \label{l:hk}
 If $(\rho_t,\u_t)_{t\in [0,1]}$ is a narrowly continuous curve with total energy $E$ then $t\mapsto \rho_t$ is $1/2$-H\"older continuous w.r.t. $d$, and more precisely
 $$
 \forall\, t_0,t_1\in [0,1]:\qquad d(\rho_{t_0},\rho_{t_1})\leq \sqrt{E}|t_0-t_1|^{1/2}.
 $$
\end{lem}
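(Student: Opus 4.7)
The plan is to reduce to an application of the time-rescaling identity recorded in Remark~\ref{rmk:time_scaling}. Given the curve $(\rho_t,\u_t)_{t\in[0,1]}$ with total energy $E$ and two times $t_0<t_1$, I will first restrict the curve to the subinterval $[t_0,t_1]$ and observe that its partial energy
\[
E_{[t_0,t_1]}:=\int_{t_0}^{t_1}\int_{\R^d}\bigl(|\nabla u_\tau|^2+|u_\tau|^2\bigr)\rd\rho_\tau\,\rd\tau
\]
is bounded above by $E$, and that the restricted curve is still narrowly continuous, starts at $\rho_{t_0}$, ends at $\rho_{t_1}$, and satisfies the non-conservative continuity equation on $(t_0,t_1)\times\R^d$ by restriction of test functions.

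Next I would reparametrize to the unit interval. Setting $T:=t_1-t_0$ and $\tilde\rho_s:=\rho_{t_0+sT}$, $\tilde{\mathfrak u}_s:=T\,\mathfrak u_{t_0+sT}$ for $s\in[0,1]$, a direct computation (entirely analogous to the one in Remark~\ref{rmk:time_scaling}) shows that the couple $(\tilde\rho,\tilde{\mathfrak u})$ still solves
\[
\partial_s\tilde\rho_s+\dive(\tilde\rho_s\nabla\tilde u_s)=\tilde\rho_s\tilde u_s
\]
in the distributional sense on $(0,1)\times\R^d$ and lies in the admissible class $\mathcal A(\rho_{t_0},\rho_{t_1})$. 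A change of variables $\tau=t_0+sT$ in the energy integral yields
\[
E[\tilde\rho;\tilde{\mathfrak u}]=T^2\int_0^1\int_{\R^d}\bigl(|\nabla u_{t_0+sT}|^2+|u_{t_0+sT}|^2\bigr)\rd\rho_{t_0+sT}\,\rd s
=T\cdot E_{[t_0,t_1]}.
\]

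Combining these with the definition of $d$ gives
\[
d^2(\rho_{t_0},\rho_{t_1})\le E[\tilde\rho;\tilde{\mathfrak u}]=T\,E_{[t_0,t_1]}\le |t_1-t_0|\,E,
\]
which is exactly the claimed Hölder bound. I do not expect any real obstacle here: the only point to be careful about is that the scaling of $\mathfrak u$ by the factor $T$ is the correct one that simultaneously preserves the non-conservative continuity equation under $t\mapsto t_0+sT$ and produces the $T^2$ factor in the energy that, combined with the Jacobian $T$ of the change of variables, yields the single factor $T$ ultimately responsible for the $1/2$-Hölder exponent.
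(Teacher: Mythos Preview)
Your proof is correct and is essentially the same as the paper's: both restrict the curve to $[t_0,t_1]$, reparametrize to $[0,1]$ via $t=t_0+(t_1-t_0)s$ with the potential rescaled by $T=t_1-t_0$, and invoke the energy scaling of Remark~\ref{rmk:time_scaling} to obtain $d^2(\rho_{t_0},\rho_{t_1})\le T\,E_{[t_0,t_1]}\le |t_1-t_0|\,E$. Your write-up simply spells out the intermediate steps more explicitly.
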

\begin{proof}
Rescaling in time and connecting $\rho_{t_0}$ to $\rho_{t_1}$ by the path $(\rho_s,(t_1-t_0)\u_s)_{s\in [0,1]}$ with $t=t_0+(t_1-t_0)s$, the resulting energy scales as $d^2(\rho_{t_0},\rho_{t_1})\leq E[\o\rho;\o \u]$ $\leq E|t_0-t_1|$.
\end{proof}
Before proceeding with the study of the topological properties of our metric we need some more results.
\begin{prop}\label{prop:geodesic_rho_to_0}
For any $\rho_0\in \Mm$ there holds $d(\rho_0,0)=2\sqrt{m_0}$ with $m_0=\rd\rho_0(\R^d)$, and the geodesic is explicitly given by $\rho_t=(1-t)^2\rho_0$ and $\mathfrak u_t=(u_t,\nabla u_t)=\left(-\frac{2}{1-t},0\right)$. 
\end{prop}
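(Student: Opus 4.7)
The plan is to establish the equality by two matching inequalities, constructing an admissible path to obtain the upper bound $d(\rho_0,0)\leq 2\sqrt{m_0}$, and then using a mass-control argument coupled with Cauchy--Schwarz for the matching lower bound.

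For the upper bound, I would directly verify that the proposed pair $(\rho_t,\mathfrak u_t)=\bigl((1-t)^2\rho_0,\,(-\tfrac{2}{1-t},0)\bigr)$ is an element of $\mathcal A(\rho_0,0)$. Narrow continuity of $t\mapsto(1-t)^2\rho_0$ on $[0,1]$ is immediate (it converges narrowly to $0$ at $t=1$ since the mass $(1-t)^2 m_0\to 0$), and a direct substitution shows that
\begin{equation*}
\partial_t\rho_t = -2(1-t)\rho_0 = (1-t)^2\rho_0\cdot\Bigl(-\tfrac{2}{1-t}\Bigr)=\rho_t u_t,
\end{equation*}
with $\nabla u_t\equiv 0$, so the non-conservative continuity equation is satisfied in the distributional sense. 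A short computation yields
\begin{equation*}
\|\mathfrak u\|^2_{L^2(0,1;H^1(\rd\rho_t))} = \int_0^1 \int_{\R^d}\frac{4}{(1-t)^2}\rd\rho_t\,\rd t = \int_0^1 4 m_0\,\rd t = 4 m_0<\infty,
\end{equation*}
which confirms $\mathfrak u\in L^2(0,1;H^1(\rd\rho_t))$ and gives $d^2(\rho_0,0)\leq 4 m_0$.

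For the matching lower bound, the key observation is that plugging the admissible test function $\phi\equiv 1\in \mathcal C^1_b(\R^d)$ into the integrated weak formulation \eqref{eq:formulation_PDE_t0_t1} yields, for any admissible $(\rho_t,\mathfrak u_t)$ connecting $\rho_0$ to $0$, that $t\mapsto m_t$ is absolutely continuous with $\dot m_t=\int_{\R^d}u_t\,\rd\rho_t$ for a.e. $t$. Cauchy--Schwarz then gives the pointwise estimate
\begin{equation*}
|\dot m_t|^2\leq m_t\int_{\R^d}|u_t|^2\rd\rho_t \leq m_t\int_{\R^d}(|u_t|^2+|\nabla u_t|^2)\rd\rho_t,
\end{equation*}
so that on the set $\{m_t>0\}$ one has $\bigl|\tfrac{d}{dt}\sqrt{m_t}\bigr|^2\leq \tfrac14\int(|u_t|^2+|\nabla u_t|^2)\rd\rho_t$. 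Integrating from $0$ to $1$ and applying Cauchy--Schwarz once more in time,
\begin{equation*}
\sqrt{m_0}=\bigl|\sqrt{m_1}-\sqrt{m_0}\bigr|\leq \int_0^1\Bigl|\tfrac{d}{dt}\sqrt{m_t}\Bigr|\rd t\leq \tfrac12\sqrt{E[\rho;\mathfrak u]},
\end{equation*}
so $E[\rho;\mathfrak u]\geq 4m_0$ for every admissible path. Taking the infimum produces $d(\rho_0,0)\geq 2\sqrt{m_0}$, matching the upper bound and showing that the explicit curve realizes the infimum, hence is a (minimizing) geodesic.

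The main subtlety I expect is justifying that $t\mapsto\sqrt{m_t}$ is absolutely continuous with a.e.\ derivative $\dot m_t/(2\sqrt{m_t})$ up to and including times where $m_t$ vanishes, so that the integration of $\bigl|\tfrac{d}{dt}\sqrt{m_t}\bigr|$ is legitimate on all of $[0,1]$. This can be handled by a standard cutoff argument (integrating $\tfrac{d}{dt}\sqrt{m_t+\eps}$ and letting $\eps\to 0^+$ via dominated convergence on the right-hand side), or by noting that on the open set $\{m_t>0\}$ the function is smooth while outside the integrand vanishes; the construction of the explicit extremal path also requires a brief check that $t=1$ does not cause problems because $\rho_t$ degenerates to zero with the correct rate, making both the singular potential $u_t$ and the energy density integrable.
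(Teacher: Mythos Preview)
Your argument is correct and arrives at the same conclusion, but by a genuinely different route than the paper. The paper proceeds in two structural steps: first it shows, via Jensen's inequality, that for \emph{any} admissible path $(\rho_t,\mathfrak u_t)$ connecting $\rho_0$ to $0$ the spatially averaged curve $\tilde\rho_t=\frac{m_t}{m_0}\rho_0$ with constant-in-space potential $\tilde u_t=\langle u_t\rangle_{\rd\rho_t}$ is again admissible with no greater energy, thereby reducing the problem to curves of the form $\rho_t=\lambda(t)\rho_0$; then it solves the resulting one-dimensional variational problem $\inf\int_0^1 m_0|\lambda'|^2/\lambda\,\rd t$ explicitly to identify $\lambda(t)=(1-t)^2$. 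Your approach bypasses this reduction entirely: you extract the lower bound directly from the mass evolution $\dot m_t=\int u_t\,\rd\rho_t$ and two applications of Cauchy--Schwarz, obtaining $\sqrt{m_0}\leq\tfrac12\sqrt{E}$ for every admissible path.

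Your argument is shorter and more elementary for the pure distance computation; the paper's averaging step, on the other hand, makes transparent \emph{why} the optimal strategy involves no transport ($\nabla u_t\equiv 0$), and the same reduction is reused verbatim in the proof of Proposition~\ref{prop:scaling_lambda}. Your handling of the $\sqrt{m_t}$ regularity issue is fine; the $\eps$-regularization you outline works cleanly since $|\dot m_t|^2\leq m_t\int|u_t|^2\rd\rho_t\leq(m_t+\eps)\int|u_t|^2\rd\rho_t$ gives a bound on $\tfrac{d}{dt}\sqrt{m_t+\eps}$ uniform in $\eps$.
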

Note in particular that $\nabla u_t\equiv 0$, which means that the optimal strategy to send a measure $\rho_0$ to zero is always to ``squeeze it down'' without any transport.
\begin{proof}
We start by showing that in the minimization problem in the definition of $d^2(\rho_0,0)$ one can restrict to paths of the form $\rho_t=\lambda(t)\rho_0$, and then we compute the optimal $\lambda(t)$ from which we recover $\left(\rho_t,\mathfrak u_t\right)$ with $u_t$ constant in space, i-e $\nabla u_t\equiv 0$.\\
{\it Step 1: no transport is involved}. Let $(\rho_t,\mathfrak u_t)_{t\in [0,1]}$ be any admissible path connecting $\rho_0$ to $\rho_1=0$ with finite energy. We claim that
$$
\tilde{\rho}_t:=\frac{m_t}{m_0}\rho_0,\qquad \tilde{u}_t:=\left<u_t\right>_{\rd\rho_t}=\frac{1}{m_t}\int_{\R^d}u_t \rd\rho_t,\qquad \nabla \tilde u_t\equiv 0
$$
always gives an admissible path (i-e $\mathfrak {\tilde u}=(\tilde u,\nabla\tilde u)\in L^2(0,1;H^1(\rd \rho_t))$) with lesser energy, where $m_t=\rd\rho_t(\R^d)$ denotes the mass at time $t$ as before.
Note that because the initial path $\rho_t$ is narrowly continuous we have in particular that $t\mapsto m_t$ is continuous, and we can thus assume that $m_t>0$ in $[0,1)$ (otherwise $\rho_{t_0}=0$ is attained for some time $t_0\in (0,1)$ so whatever happens after $t_0$ can only costs an unnecessary extra energy, and scaling in time $t=st_0$ with $s\in (0,1)$ decreases the total cost).
This continuity also shows that $\tilde{\rho}_t$ connects $\rho_0$ to $0$, since $m_t\to 0$ implies narrow convergence $\tilde\rho_t\to 0$ when $t\to 1$.
Since $\tilde{u}_t=\left<u_t\right>_{\rd\rho_t}$ is constant in space and $\nabla\tilde u_t\equiv 0$ we have by Jensen's inequality
\begin{multline*}
E[\tilde{\rho};\tilde{\mathfrak u}] 
  =\int_0^1 \left((0+\left<u_t\right>^2_{\rd\rho_t})m_t\right)\rd t \\
  \leq \int_0^1\left<|u_t|^2\right>_{\rd\rho_t}m_t\,\rd t
  =  \int_0^1 \left(\int_{\R^d}|u_t|^2\rd\rho_t\right)\rd t\leq E[\rho;\mathfrak u],
\end{multline*}
which shows that $(\tilde\rho_t,\tilde{\mathfrak u}_t)$ has lesser energy as claimed.

It remains to show that this path solves the non-conservative continuity equation.
Taking $\phi(x)\equiv 1$ in the weak formulation of the non-conservative continuity equation satisfied by $(\rho_t,\mathfrak u_t)$ and exploiting $\sup_{t}m_t\leq M$ it is easy to check that $\frac{d}{dt}m_t=\int_{\R^d}u_t \rd \rho_t\in L^2(0,1)$. Then in the sense of distributions $\mathcal{D}'((0,1)\times \R^d)$ we have
\begin{align*}
\p_t\tilde{\rho}_t
 = \p_t\left(\frac{m_t}{m_0}\rho_0\right)& =\frac{d m_t}{dt}\cdot\frac{\rho_0}{m_0} =\left(\int_{\R^d}u_t \rd\rho_t\right)\frac{\rho_0}{m_0}\\
& =\left(\frac{1}{m_t}\int_{\R^d}u_t \rd\rho_t\right)\frac{m_t}{m_0}\rho_0 =\tilde{u}_t\tilde{\rho}_t,
\end{align*}
and we conclude recalling that by construction $\nabla\tilde u_t\equiv 0$ in the advection term.\\
{\it Step 2: computing the geodesic.} By the previous step it is enough to minimize over all $(\rho_t,\mathfrak u_t)$ such that $\rho_t=\lambda(t)\rho_0$ with $\lambda(t)>0$ in $[0,1)$, and constant-in-space potentials $\nabla u_t\equiv 0$. For such paths it is easy to realize that necessarily $u_t=\frac{\lambda'(t)}{\lambda(t)}$, thus we only have to solve the minimization problem
$$
\inf\left\{m_0\int_0^1\left|\frac{\lambda '(t)}{\lambda(t)}\right|^2\lambda(t)\rd t:\qquad \lambda(0)=1,\,\lambda(1)=0\right\},
$$
where $m_0=\rd\rho_0(\R^d)$.
It is a simple exercise in the calculus of variations to check that the unique minimizer is
$$
\lambda(t)=(1-t)^2,\qquad u(t)=\frac{\lambda'(t)}{\lambda(t)}=-\frac{2}{1-t},\qquad \rho_t=\lambda(t)\rho_0=(1-t)^2\rho_0
$$
and the explicit computation finally gives
$$
d^2(\rho_0,0)
= m_0\int_0^1\left|u(t)\right|^2\lambda(t)\rd t = m_0\int_0^1\left|\frac{2}{1-t}\right|^2(1-t)^2\rd t = 4 m_0
$$
as claimed. 
\end{proof}
As an easy consequence of the previous Proposition~\ref{prop:geodesic_rho_to_0} we obtain
\begin{cor}
\label{corm1} Subsets of $\Mm$ with uniformly bounded mass are bounded in $(\Mm,d)$ as $\rd\rho(\R^d)\leq M\Rightarrow d(\rho,0)\leq 2\sqrt{M}$.
\end{cor}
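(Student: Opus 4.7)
The plan is to read off the bound directly from Proposition~\ref{prop:geodesic_rho_to_0}, which gives the exact formula $d(\rho,0)=2\sqrt{\rho(\R^d)}$. If $S\subset\Mm$ satisfies $\rho(\R^d)\leq M$ for all $\rho\in S$, then applying the proposition to each element of $S$ yields $d(\rho,0)\leq 2\sqrt{M}$, which is precisely the inequality stated in the corollary.

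To confirm that this establishes boundedness of $S$ in the metric sense, I would invoke the triangle inequality proved in Theorem~\ref{theo:d_distance}: for any $\rho_1,\rho_2\in S$ one has $d(\rho_1,\rho_2)\leq d(\rho_1,0)+d(0,\rho_2)\leq 4\sqrt{M}$, so the diameter of $S$ is at most $4\sqrt{M}<\infty$. There is really no obstacle here — the entire content of the corollary sits in the explicit geodesic construction used to prove Proposition~\ref{prop:geodesic_rho_to_0}, and the corollary is just the immediate quantitative statement that the map $\rho\mapsto d(\rho,0)$ is controlled by $\sqrt{\rho(\R^d)}$.
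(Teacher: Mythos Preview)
Your proof is correct and matches the paper's own approach: the paper simply states that the corollary is an easy consequence of Proposition~\ref{prop:geodesic_rho_to_0}, which is precisely what you do by reading off $d(\rho,0)=2\sqrt{\rho(\R^d)}\leq 2\sqrt{M}$. The additional triangle-inequality remark confirming metric boundedness is fine but not strictly needed, since $d(\rho,0)\leq 2\sqrt{M}$ already says $S$ lies in the closed ball of radius $2\sqrt{M}$ about $0$.
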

\begin{prop}[Scaling properties]
\label{prop:scaling_lambda}
For any $\lambda\in\R^+$ and measures\linebreak $\rho_0,\rho_1\in \Mm$ we have
\begin{equation}
\label{eq:scaling_rho_lambda.rho}
d(\rho_0,\lambda \rho_0)=2\sqrt{m_0}\left|1-\sqrt{\lambda}\right|
\end{equation}
and
\begin{equation}
\label{eq:scaling_lambda.rho0_lambda.rho1}
d(\lambda\rho_0,\lambda\rho_1)=\sqrt{\lambda}d(\rho_0,\rho_1).
\end{equation}
\end{prop}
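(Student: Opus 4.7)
The plan is to handle the two identities separately. The homogeneity \eqref{eq:scaling_lambda.rho0_lambda.rho1} is essentially free from the linearity of the equation and energy in $\rho$, while \eqref{eq:scaling_rho_lambda.rho} will require constructing an explicit competitor for the upper bound and then adapting the averaging argument from Proposition~\ref{prop:geodesic_rho_to_0} for the lower bound.

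First I would prove \eqref{eq:scaling_lambda.rho0_lambda.rho1}. Given any admissible path $(\rho_t,\mathfrak u_t)\in\mathcal{A}(\rho_0,\rho_1)$, the rescaled couple $(\lambda\rho_t,\mathfrak u_t)$ is admissible for the pair $(\lambda\rho_0,\lambda\rho_1)$ because the non-conservative continuity equation is linear in $\rho$ while the potential is unchanged; moreover the energy functional is also linear in $\rho$, giving $E[\lambda\rho;\mathfrak u]=\lambda E[\rho;\mathfrak u]$. Taking the infimum yields $d^2(\lambda\rho_0,\lambda\rho_1)\leq\lambda d^2(\rho_0,\rho_1)$, and applying the same observation with $\lambda^{-1}$ to the pair $(\lambda\rho_0,\lambda\rho_1)$ delivers the reverse inequality.

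Next I would establish the upper bound in \eqref{eq:scaling_rho_lambda.rho} via the one-parameter ansatz suggested by Proposition~\ref{prop:geodesic_rho_to_0}: set $\mu(t)=1+t(\sqrt\lambda-1)$, $\rho_t=\mu(t)^2\rho_0$, and $\mathfrak u_t=(u_t,0)$ with the space-constant value $u_t=2\mu'(t)/\mu(t)$. The non-conservative continuity equation then reduces to $\partial_t(\mu^2\rho_0)=2\mu\mu'\rho_0=u_t\mu^2\rho_0$, which is satisfied, and integrating $u_t^2$ against $\mu^2\rho_0$ gives energy $4m_0(\sqrt\lambda-1)^2$.

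For the matching lower bound, given any admissible $(\rho_t,\mathfrak u_t)\in\mathcal{A}(\rho_0,\lambda\rho_0)$, I would first rule out paths that vanish at some intermediate time $t_0\in(0,1)$: splitting such a path and using Proposition~\ref{prop:geodesic_rho_to_0} together with the time-rescaling from Remark~\ref{rmk:time_scaling} produces $E\geq 4m_0/t_0+4\lambda m_0/(1-t_0)\geq 4m_0(1+\sqrt\lambda)^2$, which strictly exceeds the target $4m_0(1-\sqrt\lambda)^2$. Hence one may assume $m_t>0$ on $[0,1]$ and repeat Step~1 of the proof of Proposition~\ref{prop:geodesic_rho_to_0} verbatim: the replacement $\tilde\rho_t=(m_t/m_0)\rho_0$, $\tilde u_t=\langle u_t\rangle_{\rd\rho_t}$, $\nabla\tilde u_t\equiv 0$ remains admissible (now connecting $\rho_0$ to $\lambda\rho_0$ since $m_1=\lambda m_0$) and has smaller energy by Jensen's inequality. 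This reduces the problem to minimizing $m_0\int_0^1(\lambda'(t))^2/\lambda(t)\,\rd t$ among positive scalar functions $\lambda(\cdot)$ with $\lambda(0)=1$ and $\lambda(1)=\lambda$; the substitution $\mu=\sqrt\lambda$ converts the integrand into $4(\mu'(t))^2$, whose minimum is attained by the affine interpolation and equals $4m_0(\sqrt\lambda-1)^2$. The main obstacle I anticipate is precisely this reduction to a one-dimensional variational problem, and in particular the exclusion of zero-crossing paths, but both steps follow the pattern of the earlier proposition once the ``loss through zero'' estimate above is in hand.
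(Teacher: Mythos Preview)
Your proposal is correct and follows essentially the same route as the paper: linearity in $\rho$ for \eqref{eq:scaling_lambda.rho0_lambda.rho1}, and the averaging/Jensen reduction to a one-dimensional variational problem for \eqref{eq:scaling_rho_lambda.rho}. The paper's proof is terser, simply writing ``argue as in the proof of Proposition~\ref{prop:geodesic_rho_to_0}'' and noting that the optimal $\lambda(t)$ is again a second-order polynomial; your explicit treatment of the zero-crossing case (via the estimate $E\geq 4m_0/t_0+4\lambda m_0/(1-t_0)\geq 4m_0(1+\sqrt\lambda)^2$) is in fact a point the paper glosses over, since the original argument in Proposition~\ref{prop:geodesic_rho_to_0} for discarding such paths was specific to the target $\rho_1=0$.
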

\noindent Remark that \eqref{eq:scaling_rho_lambda.rho} can be rephrased more intrinsically as: ``if $\rho_0,\rho_1\in \Mm$ are proportional measures then $d(\rho_0,\rho_1)=2|\sqrt{m_0}-\sqrt{m_1}|$'', and also that the previous Proposition~\ref{prop:geodesic_rho_to_0} is a particular case with $\lambda=0$.
\begin{proof}
For the first part it is easy to argue as in the proof of Proposition~\ref{prop:geodesic_rho_to_0} to see that it is enough to minimize over all paths $\rho_t=\lambda(t)\rho_0$, with $u_t=\frac{\lambda'(t)}{\lambda(t)},\nabla u_t\equiv 0$ (i-e averaging in space decreases the energy) and the constraints $\lambda(0)=1$, $\lambda(1)=\lambda$.
Finding the optimal $\lambda(t)$ is again a simple exercise in the calculus of variations, and the explicit computation leads to \eqref{eq:scaling_rho_lambda.rho} (the minimizer $\lambda(t)$ is again a second order polynomial). 

For \eqref{eq:scaling_lambda.rho0_lambda.rho1}, note that our statement is trivial if $\lambda=0$ so we can assume that $\lambda>0$. We denote below $\mu_0=\lambda\rho_0,\mu_1=\lambda\rho_1$. 
Let $\left(\rho^k_t,\u^k_t\right)_{t\in [0,1]}$ be a minimizing sequence in the definition of $d^2(\rho_0,\rho_1)$. Because both the non-conservative continuity equation and the energy are linear in $\rho$ we see that $(\lambda \rho^k_t,\u^k_t)$ is an admissible path connecting $\mu_0$ to $\mu_1$, and
$$
d^2(\mu_0,\mu_1)\leq E[\lambda \rho^k;\u^k]=\lambda E[\rho^k;\u^k]\underset{k\to\infty}{\rightarrow} \lambda d^2(\rho_0,\rho_1).
$$
The other inequality is obtained similarly: if $(\mu^k_t,\mathfrak v^k_t)$ is a minimizing sequence for $d^2(\mu_0,\mu_1)$ then $(\frac{1}{\lambda}\mu^k_t,\mathfrak v^k_t)$ connects $\rho_0$ to $\rho_1$, thus
$$
d^2(\rho_0,\rho_1)\leq E[\mu^k/\lambda;\mathfrak v^k]=\frac{1}{\lambda}E[\mu^k;\mathfrak v^k]\underset{k\to\infty}{\rightarrow}\frac{1}{\lambda}d^2(\mu_0,\mu_1)
$$
and the proof is complete.
\end{proof}
\subsection{Topological properties}
\begin{theo} \label{dens}
The compactly supported measures are dense in $(\Mm,d)$: for any $\rho\in \Mm$ and $\eps>0$ there exists $\rho'\in \Mm$ compactly supported such that $d(\rho,\rho')\leq \eps$.
\end{theo}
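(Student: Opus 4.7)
Given $\rho\in\Mm$ and $\varepsilon>0$, my candidate compactly supported measure will be $\rho':=\rho|_{\o B_R}$ for $R$ sufficiently large. Since $\rho$ is a finite Radon measure, $R\mapsto\rho(B_R)$ is monotone bounded, so I may choose $R$ with $\rho(\partial B_R)=0$ and the tail mass $m(\sigma):=\rho(\R^d\setminus\o B_R)<\varepsilon^2/16$, where $\sigma:=\rho|_{\R^d\setminus\o B_R}$. The plan is then to exhibit an admissible curve from $\rho$ to $\rho'$ whose energy is at most $\varepsilon^2/4$, yielding $d(\rho,\rho')\le\varepsilon/2<\varepsilon$.

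Motivated by Proposition~\ref{prop:geodesic_rho_to_0}, the idea is to leave the inner part $\rho'$ untouched and simply contract the tail $\sigma$ down to zero without any transport. Concretely, take
\[
\rho_t:=\rho'+(1-t)^2\sigma,\qquad \mathfrak u_t\ \text{corresponding to}\ u_t:=-\tfrac{2}{1-t}\mathbf 1_{\R^d\setminus\o B_R},\ \nabla u_t:=0.
\]
Then $\rho_0=\rho$, $\rho_1=\rho'$, and $t\mapsto\rho_t$ is narrowly continuous. A direct distributional computation (integrating by parts in $t$, using $\rho(\partial B_R)=0$ so that the decomposition $\rho_t=\rho'+(1-t)^2\sigma$ has $\rho_t$-a.e.\ disjoint pieces) shows that the non-conservative continuity equation $\partial_t\rho_t+\dive(\rho_t\nabla u_t)=\rho_t u_t$ holds. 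The formal energy is
\[
E=\int_0^1\!\!\int\!\Bigl(|\nabla u_t|^2+|u_t|^2\Bigr)\rd\rho_t\,\rd t=\int_0^1\frac{4}{(1-t)^2}\cdot(1-t)^2 m(\sigma)\,\rd t=4m(\sigma)<\varepsilon^2/4,
\]
which would give the desired bound via $d(\rho,\rho')\le\sqrt{E}=2\sqrt{m(\sigma)}$.

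The substantive step is to certify that $(u_t,0)$ is a genuine element of $L^2(0,1;H^1(\rd\rho_t))$. I will approximate by smooth cutoffs $\phi^k(t,x):=-\frac{2\chi_k(x)}{1-t}$, with $\chi_k\in\mathcal C_b^\infty(\R^d)$ satisfying $\chi_k=0$ on $\o B_R$, $\chi_k=1$ on $\R^d\setminus B_{R+\delta_k}$, and $|\nabla\chi_k|\le C/\delta_k$. Crucially the transition annulus $A_k:=B_{R+\delta_k}\setminus\o B_R$ lies entirely in the exterior, where $\rho_t=(1-t)^2\sigma$; thus on $A_k$,
\[
\int_{A_k}|\phi^k-u_t|^2\rd\rho_t\le\tfrac{4}{(1-t)^2}\cdot(1-t)^2\sigma(A_k)=4\sigma(A_k),
\]
so $\|\phi^k-u_t\|_{L^2(0,1;L^2(\rd\rho_t))}^2\le 4\sigma(A_k)\to0$ as $\delta_k\to0$ (by $\rho(\partial B_R)=0$). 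The same cancellation of $(1-t)^{-2}$ against the factor $(1-t)^2$ carried by $\rho_t$ on $A_k$ is what makes this convergence work.

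The hardest step will be the gradient convergence: the analogous estimate yields $\|\nabla\phi^k\|_{L^2(0,1;L^2(\rd\rho_t;\R^d))}^2\le 4C^2\sigma(A_k)/\delta_k^2$, so one needs to choose $\delta_k$ adaptively to the local concentration of $\sigma$ near $\partial B_R$. To sidestep the case where this ratio fails to vanish (e.g.\ when $\rho$ has positive radial density at $\partial B_R$), I will first pre-approximate $\rho$ in $d$ by a measure $\tilde\rho$ having a genuine radial gap near $\partial B_R$: the mass of $\rho$ in a thin shell of width $\eta$ can be slightly displaced inward at a $d$-cost that vanishes with $\eta$ (by the scaling and triangle-inequality machinery of Section~\ref{section:metric_space}). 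For such $\tilde\rho$ the supports of the inner and outer pieces are separated by a positive distance, the smooth cutoff $\chi_k$ can be chosen to be locally constant on each piece, hence $\nabla\chi_k\equiv0$ on $\mathrm{supp}(\tilde\rho_t)$, and $(u_t,0)\in H^1(\rd\tilde\rho_t)$ is immediate. The triangle inequality $d(\rho,\rho')\le d(\rho,\tilde\rho)+d(\tilde\rho,\tilde\rho|_{\o B_R})+d(\tilde\rho|_{\o B_R},\rho')$ then absorbs the pre-regularization error into $\varepsilon$.
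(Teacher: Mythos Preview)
Your two-step outline --- first manufacture an annular gap, then kill the exterior mass with the ``squeeze-down'' potential from Proposition~\ref{prop:geodesic_rho_to_0} --- is exactly the strategy the paper uses, and your diagnosis that the discontinuous potential $-\tfrac{2}{1-t}\mathbf 1_{\R^d\setminus\o B_R}$ cannot be placed in $H^1(\rd\rho_t)$ without a gap is correct.

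The difference is in how the gap is created, and here your plan has a genuine loose end. You propose to ``slightly displace inward'' the mass in a thin shell. This is underspecified, and if taken literally it causes trouble: any smooth potential whose gradient points inward on $R<|x|<R+\eta$ will also move the interior mass, so $\tilde\rho|_{\o B_R}\neq\rho'$, and your last triangle-inequality term $d(\tilde\rho|_{\o B_R},\rho')$ does \emph{not} vanish for free. To control it you would have to separate the transplanted shell mass from the original $\rho'$ inside $\o B_R$ --- which is precisely the ``discontinuous indicator potential'' problem you are trying to avoid, now recurring at a smaller scale. The appeal to ``scaling and triangle-inequality machinery'' does not resolve this circularity.

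The paper sidesteps this by pushing the exterior mass \emph{outward} rather than inward. Concretely, one takes a globally Lipschitz potential $U$ with $U(x)=|x|-R$ for $R\le|x|\le R+1$ and $U\equiv 0$ on $\o B_R$ (so $(U,\nabla U)\in H^1(\rd\rho_t)$ is automatic, no cutoff approximation needed), and runs the non-conservative continuity equation on the exterior piece alone. After a short time $\delta$ the exterior support sits in $\{|x|\ge R+\delta\}$ while the interior $\rho'=\rho|_{B_R}$ is literally untouched. This costs $O(\sqrt{\o m_0\,\delta})$ with $\o m_0=\rho(\R^d\setminus B_R)$ small, and leaves you with $\tilde\rho|_{B_R}=\rho'$ exactly, so the final triangle-inequality term is zero. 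Replacing your inward displacement by this outward push makes your plan complete and essentially identical to the paper's proof.
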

\begin{proof}
Observe that $\rho$ has arbitrarily small mass outside of $B_R$ for large $R$. The argument goes in two steps: first we create an annular gap around $|x|=R$ with arbitrarily small cost, i-e construct a measure $\tilde{\rho}$ which has support in $B_R\cup (\R^d\setminus B_{R+\delta})$ for some small $\delta>0$ such that $d(\rho,\tilde{\rho})\leq \eps/2$ and the mass of $\tilde{\rho}$ outside of $B_{R+\delta}$ is still small. The second step consists in sending all the exterior mass $\rd\tilde\rho(\R^d\setminus B_{R+\delta})$ to zero while keeping the interior part $\tilde\rho|_{B_R}$ unchanged (the fact that we can do both simultaneously relies on the gap of size $\delta>0$). In fact we will do so without modifying the original measure $\rho$ inside $B_R$, so that really in the end we will take $\rho '=\rho|_{B_R}$ with $d(\rho,\rho')\leq \eps$.

\noindent{\it Step 1: creating the gap}. For fixed $R>0$ let us first decompose
$$
\rho=\underline{\rho}+\overline{\rho}\qquad \mbox{with }\underline{\rho}:=\rho\mathbf{1}_{B_R}\mbox{ and }\overline{\rho}:=\rho\mathbf{1}_{\R^d\setminus B_R}.
$$
Let also $U(x)=U_R(x)\in \mathcal{C}^{\infty}(\R^d)$ be any smooth function such that
$$
U(x)=\left\{
\begin{array}{ll}
|x|-R	& \mbox{if }R\leq  |x|\leq R+1\\
2 & \mbox{if }|x|>R+2
\end{array}
\right.
\quad \mbox{and}\quad |U|,|\nabla U|\leq 2.
$$
By \cite[Prop. 3.6]{maniglia2007probabilistic} we can solve
$$
\left\{
\begin{array}{l}
\partial_{t}\overline{\rho}_t+\dive(\o{\rho}_t\nabla U)=\o{\rho}_tU\\
\o{\rho}|_{t=0}=\o\rho,
\end{array}
\right. 
$$
and $t\mapsto \o\rho_t$ is narrowly continuous. Moreover by construction the initial datum $\o\rho$ has support in $\R^d\setminus B_R$, so for $t\in[0,1]$ it is easy to see that the measure $\o\rho_t$ has support in $\R^d\setminus B_{R+t}$ (the characteristics $\frac{dx}{dt}=\nabla U(x)$ diverge radially away from $B_R$, with constant speed $|\nabla U|=1$ for $|x|\gtrsim R$). Denoting the mass
$$
\o m_t:=\int_{\R^d}\rd\o\rho_t=\int_{\R^d\setminus B_{R}}\rd\o\rho_t=\int_{\R^d\setminus B_{R+t}}\rd\o\rho_t
$$
it is easy to check from the non-conservation continuity equation that
$$
\left|\frac{d }{dt}\o m_t\right|=\left|\int_{\R^d}U\rd\o\rho_t\right|\leq \int_{\R^d}|U|\rd\o\rho_t\leq 2 \o m_t,
$$
so that
$$
\o m_t \leq e^{2t}\o m_0
$$
for $t\in[0,1]$. Define now
$$
\rho_t:=\underline{\rho}+\o \rho_t
\qquad\mbox{and}\qquad
u_t(x)=\left\{
\begin{array}{ll}
0 & \mbox{if }|x|< R\\
U(x) & \mbox{if } |x|\geq R
\end{array}
\right.
$$
(observe that $u_t$ is constant in time, Lipschitz-continuous in space, and uniformly bounded by $|u_t(x)|\leq 2$). By construction for $t>0$ the measure $\rho_t$ splits into a measure $\underline{\rho}$ with support in $B_R$ and a measure $\o\rho_t$ with support in $\R^d\setminus B_{R+t}$.
Exploiting the fact that these supports stay at distance $t>0$ away from each other it is easy to check that $(\rho_t,\u_t)$ solves the non-conservative continuity equation in the sense of distributions (for $t>0$ the ``matching'' at $|x|=R$ is never seen!).
The positive gap moreover allows to compute for a.e. $t\in(0,1)$
\begin{multline*}
\int_{\R^d}(|\nabla u_t|^2+|u_t|^2)\rd\rho_t
 = \int_{B_R}(|\nabla u_t|^2+|u_t|^2)\rd\underline{\rho}+ \int_{ B_{R+t}^{\complement}}(|\nabla u_t|^2+|u_t|^2)\rd\o{\rho}_t\\
 \leq 0 +(\|\nabla U\|^2_{\infty}+\|U\|^2_{\infty})\int_{\R^d\setminus B_{R+t}}\rd\o{\rho}_t
 \leq C\o m_t \leq Ce^{2t}\o m_0,
\end{multline*}
which shows that this path has finite energy. In particular by Lemma~\ref{l:hk} $t\mapsto\rho_t$ is continuous with respect to our metric $d$, and $d(\rho,\rho_t)$ is small for small $t>0$.
\\
Now for fixed $\eps>0$ there exists $R>0$ such that $\o m_0=\rd\rho(\R^d\setminus B_R)\leq \eps$.
Choosing $t=\delta>0$ small enough we get that the measure $\tilde{\rho}:=\o\rho_{t=\delta}$ satisfies $\operatorname{supp}(\tilde{\rho})\subset B_R\cup(\R^d\setminus B_{R+\delta})$,
$$
\tilde{\rho}|_{B_R}=\rho|_{B_R},
\qquad
\rd\tilde{\rho}(\R^d\setminus B_{R+\delta})=\o m_{\delta}\leq e^{2\delta}\o m_0\leq 2\eps,
$$
and
$$
d(\rho,\tilde{\rho})\leq \eps.
$$
\noindent
{\it Step 2: sending the exterior mass to zero.} Choose now any smooth function $\tilde{u}(x)$ such that
$$
\tilde{u}(x)=
\left\{
\begin{array}{ll}
 0 & \mbox{if }|x|\leq R\\
 1 & \mbox{if }|x|\geq R+\delta
\end{array}
\right.
$$
and let
$$
\tilde{u}_t(x):=-\frac{2}{1-t}\tilde{u}(x)
\quad\mbox{and}\quad
\tilde\rho_t:=\tilde{\rho}\mathbf{1}_{B_R}+(1-t)^2\tilde{\rho}\mathbf{1}_{\R^d\setminus B_{R+\delta}}.
$$
Since $\tilde{\rho}_t$ always has a gap between $|x|< R$ and $|x|\geq R+\delta$ it is straightforward to check that $(\tilde\rho_t,\tilde{\u}_t)$ is an admissible curve connecting $\tilde{\rho}$ to $\rho'=\rho\mathbf{1}_{B_R}=\tilde{\rho}\mathbf{1}_{B_R}$ (in particular $\tilde{\rho}_t$ is narrowly continuous and solves the non-conservative continuity equation), with cost exactly
\begin{align*}
\int_0^1\left(\int_{\R^d}(|\nabla\tilde u_t|^2+|\tilde u_t|^2)\rd\tilde\rho_t\right)\rd t
& =0+\int_0^1\left(\int_{\R^d\setminus B_{R+\delta}}(|\nabla\tilde u_t|^2+|\tilde u_t|^2)\rd\tilde\rho_t\right)\rd t\\
& =4\o m_{\delta}\leq 8\eps.
\end{align*}
Indeed $\tilde{u}_t\equiv 0$ in $\overline B_{R}$, the measure $\tilde{\rho}_t$ does not charge the annulus $R<|x|<R+\delta$, and by construction outside of $B_{R+\delta}$ the potential $\tilde{u}_t$ it is exactly the geodesic between $\rho'$ to zero, see the proof of Proposition~\ref{prop:geodesic_rho_to_0}.
By the triangular inequality we finally get
$$
d(\rho,\rho')\leq d(\rho,\tilde{\rho})+d(\tilde\rho,\rho')\leq \eps+\sqrt{8\eps},
$$
and, since $\eps>0$ was arbitrary and $\rho'$ is compactly supported, the conclusion follows.
\end{proof}
\begin{prop}
\label{prop:d<W2}
Let $\mathcal{P}_2(\R^d)\subset \Mm_b(\R^d)$ be the set of Borel probability measures with finite second moments and $\mathcal{W}_2$ the quadratic Kantorovich-Rubinstein-Wasserstein distance defined on $\mathcal{P}_2$. Then for any $\rho_0,\rho_1\in \mathcal{P}_2$ there holds
\begin{equation*}\label{e:vasour}
d(\rho_0,\rho_1)\leq \mathcal{W}_2(\rho_0,\rho_1).
\end{equation*}
\end{prop}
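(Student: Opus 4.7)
The plan is to convert an optimal Benamou--Brenier path for $\mathcal{W}_2$ into an admissible path for $d$ by correcting it with the solution of a linear elliptic problem. First I would reduce by approximation to the case where $\rho_0,\rho_1$ are smooth probability densities of compact support, using the lower semicontinuity of $d$ (foreshadowed in the introduction) together with the standard continuity of $\mathcal{W}_2$ under convolution-regularization. In that smooth regime the classical Benamou--Brenier theorem furnishes a smooth curve $(\rho_t,\nabla\psi_t)_{t\in[0,1]}$ satisfying the \emph{conservative} continuity equation $\p_t\rho_t+\dive(\rho_t\nabla\psi_t)=0$ and realizing $\int_0^1\int_{\R^d}|\nabla\psi_t|^2\,\rd\rho_t\,\rd t=\mathcal{W}_2^2(\rho_0,\rho_1)$.

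Next, at each fixed $t$ I would define $w_t$ as the unique element of $H^1(\rd\rho_t)$ solving the linear elliptic equation
$$\dive(\rho_t\nabla w_t)-\rho_t w_t=\rho_t\psi_t,$$
obtained by Lax--Milgram applied to the inner product $\langle\cdot,\cdot\rangle_{H^1(\rd\rho_t)}$ with right-hand side $\varphi\mapsto-\int\psi_t\varphi\,\rd\rho_t$. Setting $u_t:=\psi_t+w_t\in H^1(\rd\rho_t)$, admissibility for $d$ is a one-line verification:
$$\p_t\rho_t+\dive(\rho_t\nabla u_t)=0+\dive(\rho_t\nabla w_t)=\rho_t(\psi_t+w_t)=\rho_t u_t.$$

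For the energy bound I would use the weak formulation $\langle w_t,\varphi\rangle_{H^1(\rd\rho_t)}=-\int\psi_t\varphi\,\rd\rho_t$ with two choices of test function. Taking $\varphi=w_t$ and applying Cauchy--Schwarz gives $\|w_t\|_{H^1(\rd\rho_t)}\leq\|\psi_t\|_{L^2(\rd\rho_t)}$, while $\varphi=\psi_t$ yields $\langle w_t,\psi_t\rangle_{H^1(\rd\rho_t)}=-\|\psi_t\|_{L^2(\rd\rho_t)}^2$. Expanding
$$\|u_t\|^2_{H^1(\rd\rho_t)}=\|\psi_t\|^2_{H^1(\rd\rho_t)}+2\langle\psi_t,w_t\rangle_{H^1(\rd\rho_t)}+\|w_t\|^2_{H^1(\rd\rho_t)}$$
and substituting the two identities produces the clean estimate
$$\|u_t\|^2_{H^1(\rd\rho_t)}\leq\|\psi_t\|^2_{H^1(\rd\rho_t)}-\|\psi_t\|_{L^2(\rd\rho_t)}^2=\int|\nabla\psi_t|^2\,\rd\rho_t.$$
Integrating in $t$ delivers $d^2(\rho_0,\rho_1)\leq\mathcal{W}_2^2(\rho_0,\rho_1)$ for the smooth approximants, and the approximation argument concludes the proof.

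The main obstacle will be the regularity and measurability step: I need to check that $t\mapsto w_t$ is measurable with values in $H^1(\rd\rho_t)$ so that $\mathfrak u=(u_t,\nabla u_t)$ genuinely defines an element of $L^2(0,1;H^1(\rd\rho_t))$, and that the cross-term integration by parts used to test the elliptic equation against $\psi_t$ is licit for the regularity at hand. Performing the construction first on smooth compactly supported densities (where $\psi_t$ is classically $C^\infty$ and the elliptic theory is standard) and then passing to the limit via Theorem~\ref{dens} and the lower semicontinuity of $d$ should bypass these technicalities.
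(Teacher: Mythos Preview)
Your argument is correct, and the core mechanism is the same as the paper's: one projects the Benamou--Brenier dynamics onto the $H^1(\rd\rho_t)$ structure via Lax--Milgram/Riesz. In fact your $u_t=\psi_t+w_t$ coincides with the paper's $u_t$, since both satisfy $-\dive(\rho_t\nabla u_t)+\rho_t u_t=-\dive(\rho_t\nabla\psi_t)$.

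The difference is only in execution, and the paper's route is shorter. Rather than decomposing $u_t=\psi_t+w_t$ (which forces $\psi_t\in L^2(\rd\rho_t)$ and hence your approximation detour), the paper applies Riesz directly to the linear form $\zeta_t=-\dive(\rho_t\mathbf v_t)$ acting on $H^1(\rd\rho_t)$ by $\phi\mapsto\int\mathbf v_t\cdot\nabla\phi\,\rd\rho_t$. This needs only $\mathbf v_t\in L^2(\rd\rho_t;\R^d)$, never the potential itself, and the energy bound $\|\u_t\|_{H^1(\rd\rho_t)}=\|\zeta_t\|_{H^{-1}(\rd\rho_t)}\leq\|\mathbf v_t\|_{L^2(\rd\rho_t)}$ is immediate from the Riesz isometry, replacing your two-test-function expansion in one stroke. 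This eliminates the need for smooth approximants, the appeal to lower semicontinuity of $d$, and the measurability worries you flag at the end. Your computation is a nice explicit verification of the abstract norm inequality, but the direct functional-analytic step is both cleaner and applies verbatim to any $\mathbf v_t$ (not necessarily a gradient).
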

\begin{proof}
It is well known \cite{villani03topics,villani08oldnew} that $\left(\mathcal{P}_2(\R^d),\mathcal{W}_2\right)$ is a geodesic space, so we can connect $\rho_0$ and $\rho_1$ by a constant-speed geodesic path $\rho_t$ in this Wasserstein space. By \cite[Theorem 13.8]{villani08oldnew} there exists a velocity field $\mathbf v_t\in L^\infty(0,1;L^2(\rd\rho_t))$ such that
\begin{equation}
\label{e:ce2}
\p_t\rho_t+\dive(\rho_t\mathbf  v_t)=0
\end{equation}
holds in the distributional sense, and $t\mapsto\rho_t$ is narrowly continuous (since it is continuous with values in the stronger Wasserstein metric topology).

For almost every $t\in (0,1)$ the distribution $\zeta_t=-\dive(\rho_t \mathbf v_t)\in \mathcal{D}'(\R^d)$ defines by duality a continuous linear form on $H^1(\rd\rho_t)$ with norm $\|\zeta_t\|_{H^{-1}(\rd\rho_t)}\leq \|\mathbf v_t\|_{L^2(\rd\rho_t)}$, thus by the Riesz representation theorem  we can find a unique $\u_t=(u_t,\nabla u_t)\in H^1(\rd\rho_t)$ such that
$$
\left(\u_t,.\right)_{H^1(\rd\rho_t)}=\zeta_t
\quad \mbox{and}\quad
\|\u_t\|_{H^1(\rd\rho_t)}=\|\zeta_t\|_{H^{-1}(\rd\rho_t)}\leq \|\mathbf v_t\|_{L^2(\rd\rho_t)}.
$$
In particular by definition of $(.,.)_{H^1(\rd\rho_t)}$ for a.e. $t\in (0,1)$ there holds
$$
-\dive(\rho_t\nabla u_t)+\rho_t u_t=-\dive(\rho_t \mathbf v_t)\qquad\mbox{in }\mathcal{D}'(\R^d),
$$
and by \eqref{e:ce2} it is easy to check that $\partial_t \rho_t +\dive(\rho_t\nabla u_t)=\rho_t u_t$ in the sense of distributions.
Since $t\mapsto\rho_t$ is narrowly continuous we see that $(\rho_t,\u_t)$ is an admissible path in the definition of our distance, and in particular
$$
d^2(\rho_0,\rho_1)\leq E[\rho;\u]=\int_0^1\|\u_t\|^2_{H^1(\rd\rho_t)}\rd t\leq \int _0^1\|\mathbf v_t\|^2_{L^2(\rd\rho_t)}\rd t.
$$  
By the Benamou-Brenier formula \cite{AGS06,BenamouBrenier00}, the right-hand side coincides with the squared Wasserstein distance $\mathcal{W}_2^2(\rho_0, \rho_1)$ and the proof is achieved.
\end{proof}

\begin{theo}
\label{theo:equiv}
The metric $d$ is topologically equivalent to the bounded-Lipschitz metric $d_{BL}$, i-e $d(\rho^k,\rho)\to 0$ if and only if $d_{BL}(\rho^k,\rho)\to 0$. Moreover, $(\Mm,d)$ is a complete metric space, $d$ metrizes the narrow convergence of measures on $\Mm(\R^d)$, 
and for any $\rho_0,\rho_1\in \Mm$ with masses $m_0,m_1$ there holds
\begin{equation}
 \label{eq:dBL_loc_equivalent_d}
 d_{BL}(\rho_0,\rho_1)\leq 6\sqrt{m_0+m_1}d(\rho_0,\rho_1)
\end{equation}
\end{theo}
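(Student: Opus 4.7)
The plan is to prove the four claims in the following order: the quantitative estimate \eqref{eq:dBL_loc_equivalent_d}, the easy implication $d\Rightarrow d_{BL}$, the harder reverse implication $d_{BL}\Rightarrow d$, and finally completeness and narrow metrization as corollaries. For \eqref{eq:dBL_loc_equivalent_d}, I apply Lemma~\ref{lem:fundamental_dBL_estimate} at $s=0,\,t=1$ to an infimizing sequence of admissible paths and take the supremum over $\phi$ with $\|\phi\|_{\operatorname{Lip}}\leq 1$, yielding $d_{BL}(\rho_0,\rho_1)\leq \sqrt{ME}$ with $M=2(\max\{m_0,m_1\}+E)$. Letting $E\searrow d^2(\rho_0,\rho_1)$ and using the crude a priori bound $d(\rho_0,\rho_1)\leq d(\rho_0,0)+d(0,\rho_1)=2(\sqrt{m_0}+\sqrt{m_1})$ from Proposition~\ref{prop:geodesic_rho_to_0} (hence $d^2\leq 8(m_0+m_1)$), I get $M\leq 18(m_0+m_1)$ and conclude $d_{BL}\leq 3\sqrt{2}\sqrt{m_0+m_1}\,d\leq 6\sqrt{m_0+m_1}\,d$. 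The implication $d$-convergence $\Rightarrow$ $d_{BL}$-convergence is then immediate: $d$-convergent sequences are $d$-bounded, hence of uniformly bounded mass by Corollary~\ref{corm}, and \eqref{eq:dBL_loc_equivalent_d} concludes.

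The heart of the proof is the reverse implication $\rho^k\to\rho$ narrowly $\Rightarrow$ $d(\rho^k,\rho)\to 0$. Narrow convergence gives $m_k\to m$ where $m=\rho(\R^d)$. If $m=0$, Proposition~\ref{prop:geodesic_rho_to_0} yields $d(\rho^k,0)=2\sqrt{m_k}\to 0$ directly. Otherwise, for fixed $\eps>0$, tightness (Prokhorov) provides $R$ with $\rho^k(B_R^c),\rho(B_R^c)<\eps$ for all $k$ large; I further choose $R$ so that $\p B_R$ carries no $\rho$-mass (all but countably many $R$ work), which ensures the restrictions $\rho^k|_{B_R}\to\rho|_{B_R}$ narrowly and $\rho^k(B_R)\to\rho(B_R)$. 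Using the gap-creation technique from the proof of Theorem~\ref{dens}, I pay a $d$-cost of order $\sqrt{\eps}$ to pass from $\rho^k$ (resp.\ $\rho$) to its restriction $\rho^k|_{B_R}$ (resp.\ $\rho|_{B_R}$). It then remains to estimate $d(\rho^k|_{B_R},\rho|_{B_R})$ between compactly supported measures with mass converging to $\rho(B_R)>0$: rescaling to probability measures $\mu^k=\rho^k|_{B_R}/\rho^k(B_R)\to \mu=\rho|_{B_R}/\rho(B_R)$ narrowly in the fixed compact set $\overline{B_R}$, where narrow convergence and $\mathcal{W}_2$-convergence coincide for uniformly compactly supported probabilities, I combine the scaling formula \eqref{eq:scaling_lambda.rho0_lambda.rho1} with Proposition~\ref{prop:d<W2} to obtain $d(\rho^k|_{B_R},\rho|_{B_R})\to 0$. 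The triangle inequality then yields $\limsup_k d(\rho^k,\rho)=O(\sqrt{\eps})$, and taking $\eps\to 0$ concludes.

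Completeness follows directly: a $d$-Cauchy sequence has uniformly bounded mass by Corollary~\ref{corm}, hence is $d_{BL}$-Cauchy by \eqref{eq:dBL_loc_equivalent_d}, so it converges narrowly to some $\rho$ by completeness of $d_{BL}$, and the reverse implication upgrades this to $d(\rho^k,\rho)\to 0$. Narrow metrization then follows since $d$ is topologically equivalent to $d_{BL}$, which is known to metrize narrow convergence. The main obstacle is the gap-creation and compact-support reduction in the reverse implication: one cannot naively truncate $\rho^k$ by multiplication with $\mathbf{1}_{B_R}$, since $d$-distances between such truncations and the originals are not obviously small, so the explicit construction with a smooth radial vector field from Theorem~\ref{dens} is essential in controlling the truncation cost by the exterior mass, and the choice of $R$ avoiding $\rho$-charged spheres is essential for preserving narrow convergence of the restrictions.
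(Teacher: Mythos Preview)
Your proposal is correct and follows essentially the same route as the paper's proof: the quantitative estimate \eqref{eq:dBL_loc_equivalent_d} is obtained from Lemma~\ref{lem:fundamental_dBL_estimate} combined with the crude bound $d^2(\rho_0,\rho_1)\leq C(m_0+m_1)$ coming from Proposition~\ref{prop:geodesic_rho_to_0}; the hard implication $d_{BL}\Rightarrow d$ uses tightness, the choice of a radius $R$ with $\rho(\partial B_R)=0$ to preserve narrow convergence of restrictions, the gap-creation construction from Theorem~\ref{dens} to control the truncation cost, and finally the rescaling to probabilities via Proposition~\ref{prop:scaling_lambda} together with the comparison $d\leq\mathcal{W}_2$ from Proposition~\ref{prop:d<W2}. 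The only differences are organizational (you prove \eqref{eq:dBL_loc_equivalent_d} first, the paper proves the reverse implication first) and a slightly sharper constant in your mass bound; in the renormalization step you should also invoke \eqref{eq:scaling_rho_lambda.rho} to adjust the masses $\rho^k(B_R)\to\rho(B_R)$ before applying \eqref{eq:scaling_lambda.rho0_lambda.rho1}, as the paper does explicitly.
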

\begin{proof} [Proof of Theorem~\ref{theo:equiv}]
We proceed in two steps: in step 1 we first prove that convergence in $d_{BL}$ implies convergence in $d$, and then in step 2 we establish \eqref{eq:dBL_loc_equivalent_d} and use this to deduce that any Cauchy sequence for $d$ is Cauchy for $d_{BL}$. Recalling that $(\Mm,d_{BL})$ is complete, we see by step 2 that any Cauchy sequence in $(\Mm,d)$ is Cauchy in $(\Mm,d_{BL})$ and therefore converges in $(\Mm,d_{BL})$, so by step 1 it converges in $(\Mm,d)$ as well. Also, since any converging sequence is Cauchy this immediately implies that any $d$-converging sequence is also $d_{BL}$-converging. Finally, since $d_{BL}$ metrizes the narrow convergence clearly so does $d$.\\
For convenience we split the first step into 1a and 1b: the former essentially reduces the problem to compactly supported (sequences of) measures, and the latter concludes by renormalizing to unit masses and comparing $d$ with the Wasserstein distance $\mathcal{W}_2$.

\emph{Step 1a.}
Take any converging sequence $\rho^k\to\rho$ in $(\Mm,d_{BL})$. 
Because the bounded-Lipschitz distance metrizes the narrow convergence the masses converge $m^k \to m$, and the sequence $\{\rho^k\}$ is tight. We want to prove that it converges in $(\Mm,d)$.\\
Since only a countable number of concentric spheres in $\R^d$ can have a positive Radon measure, there exists a sequence $R_n\to \infty$ such that $\rd\rho(\partial B_{R_n})=0$. By \cite[Prop. 1.203]{fonseca_leoni_07_modern}, $\rd\rho(A)\leq \underset{k \to \infty}{\liminf\,}\rd \rho^k(A)$ for any open set $A\subset \R^d$, and $\rd\rho(B_{R_n})= \underset{k \to \infty}{\lim\,} \rd\rho^k(B_{R_n})$. Hence, $\rd\rho|_{B_{R_n}}(A)\leq \underset{k \to \infty}{\liminf\,} \rd\rho^k|_{B_{R_n}}(A)$ for any open $A\subset \R^d$, and $\rd\rho|_{B_{R_n}}({\R^d})= \underset{k \to \infty}{\lim\,} \rd\rho^k|_{B_{R_n}}({\R^d})$. 
Consequently, by \cite[Prop. 1.206]{fonseca_leoni_07_modern},
\begin{equation}
\label{eq:rho_k_CV_narrow}
\forall \,n\in\N:\qquad\rho^k|_{B_{R_n}}\underset{k \to \infty}{\longrightarrow} \rho|_{B_{R_n}} \mbox{ narrowly}. 
\end{equation}
Owing to the tightness of $\{\rho^k\}_{k\in \N}$ and using the same construction as in the proof of Theorem \ref{dens} one easily deduces that
$$
d(\rho|_{B_{R_n}},\rho)+\sup\limits_k d(\rho^k|_{B_{R_n}},\rho^k)\underset{n \to \infty}{\longrightarrow} 0,
$$
thus by triangular inequality
$$
d(\rho^k,\rho)\leq d(\rho^k,\rho^k|_{B_{R_n}})+d(\rho^k|_{B_{R_n}},\rho|_{B_{R_n}})+d(\rho|_{B_{R_n}},\rho)
$$
it suffices to prove that
\begin{equation}
\label{eq:CV_rho^k_rho_restriction_Bn}
\forall  \,n\in \N:\qquad \rho^k|_{B_{R_n}}\underset{k \to \infty}{\longrightarrow} \rho|_{B_{R_n}}\mbox{ in }(\Mm,d).
\end{equation}

{\it Step 1b.} From now on we argue for fixed $n$, and denote for simplicity $\rho^k_n=\rho^k|_{B_{R_n}}$ and $\rho_n=\rho|_{B_{R_n}}$.
Letting $m^k_n=\rd\rho^k(B_{R_n})$ and $m_n=\rd\rho(B_{R_n})$ we see by \eqref{eq:rho_k_CV_narrow} that $m^k_n\to m_n$ when $k\to \infty$. If the limit $\rd\rho_n(\R^d)=m_n=0$ then $\rho_n=0$ in $\Mm$, so by Proposition~\ref{prop:geodesic_rho_to_0} we have $d(\rho^k_n,\rho_n)=d(\rho^k_n,0)=2\sqrt{m^k_n}\to 0$ and \eqref{eq:CV_rho^k_rho_restriction_Bn} clearly holds. Otherwise using $m_k^n\to m_n>0$ we have by the scaling Proposition~\ref{prop:scaling_lambda}
\begin{align*}
d(\rho^k_n,\rho_n)
&	\leq d\left(\rho^k_n,\frac{m_n}{m^k_n}\rho^k_n\right)+d\left(\frac{m_n}{m^k_n}\rho^k_n,\rho_n\right)\\
& = 2\left|\sqrt{m^k_n}-\sqrt{m_n}\right|+\sqrt{m_n}d\left(\frac{\rho^k_n}{m^k_n},\frac{\rho_n}{m_n}\right),
\end{align*}
and it suffices to prove that the sequence of renormalized \emph{probability} measures $\o\rho^k_n=\frac{\rho^k_n}{m^k_n}$ converges to $\o\rho_n=\frac{\rho_n}{m_n}$. Note that by construction $\o\rho^k_n,\o\rho_n$ are supported in a fixed ball $B_{R_n}$, so in particular they have uniformly bounded second moments. Applying Proposition~\ref{prop:d<W2} we see that $d(\o\rho^k_n,\o\rho_n)\leq \mathcal{W}_2(\o\rho^k_n,\o\rho_n)$, and recalling that $m^k_n\to m_n>0$ and that $\rho^k_n\to \rho_n$ narrowly it is easy to see that $\o\rho^k_n\to \o\rho_n$ narrowly as well. Applying \cite[Thm. 7.12]{villani03topics} we conclude that $\mathcal{W}_2(\o\rho^k_n,\o\rho_n)\to 0$, whence $d(\o\rho^k_n,\o\rho_n)\to 0$ and \eqref{eq:CV_rho^k_rho_restriction_Bn} holds as desired.

\emph{Step 2}.
Fix $\rho_0,\rho_1$, and let $(\rho_t,\mathfrak u_t)$ be any admissible path from $\rho_0$ to $\rho_1$ with finite energy $E$.
Taking the supremum over $\phi$ in $\eqref{eq:fundamental_dBL_estimate}$ we get $d_{BL}(\rho_0,\rho_1)\leq \sqrt{ME}$,
where $M=2(\max\{m_0,m_1\}+E)$ as in Lemma~\ref{lem:fundamental_dBL_estimate}.
Choosing now a minimizing sequence instead of an arbitrary path and taking the limit we essentially obtain the same estimate with $E=\lim E[\rho^k;\mathfrak u^k]=d^2(\rho_0,\rho_1)$, whence
$$
d_{BL}(\rho_0,\rho_1)\leq \sqrt{2(\max\{m_0,m_1\}+d^2(\rho_0,\rho_1))}d(\rho_0,\rho_1).
$$
By the triangular inequality and Proposition~\ref{prop:geodesic_rho_to_0} we control $d^2(\rho_0,\rho_1)\leq 4(d^2(\rho_0,0)+d^2(0,\rho_1))=16(m_0+m_1)$, which immediately yields \eqref{eq:dBL_loc_equivalent_d}.

Finally, let $\rho^k$ be a Cauchy sequence in $(\Mm,d)$ with mass $m^k=\rd\rho^k(\R^d)$. Since Cauchy sequences are bounded we control $4 m^k=d^2(\rho^k,0)\leq C$ uniformly in $k$, thus from \eqref{eq:dBL_loc_equivalent_d} we see that
$$
d_{BL}(\rho^p,\rho^q)\leq 6\sqrt{m^p+m^q}d(\rho^p,\rho^q)\leq Cd(\rho^p,\rho^q).
$$
As a consequence $\rho^k$ is Cauchy for the bounded-Lipschitz distance and the proof is achieved.

\end{proof}

\begin{rmk}
\label{rmk:ll}
Observe that we did not prove that the distances $d$ and $d_{BL}$ are \emph{Lipschitz equivalent}, in the sense that the identity map $\operatorname{id}:(\Mm,d)\to (\Mm,d_{BL})$ may not be globally bi-Lipschitz.
This is actually impossible due to the different scalings: From Proposition~\ref{prop:scaling_lambda} we know that $d(\lambda\rho_0,\lambda\rho_1)=\sqrt\lambda d(\rho_0,\rho_1)$, but it is easy to check that $d_{BL}(\lambda\rho_0,\lambda\rho_1)=\lambda d(\rho_0,\rho_1)$.
However, our estimate \eqref{eq:dBL_loc_equivalent_d} shows that $\operatorname{id}:(\Mm,d)\to (\Mm,d_{BL})$ is Lipschitz on bounded sets.
\end{rmk}
%
%%%%%%%%%%%%%%%%%%%%%%%%%%%%%

%%%%%%%%%%%%%%%%%%%%%%%%%%

\subsection{Characterization of Lipschitz curves}
\begin{theo}\label{theo:lipschitz_curves}
Let $\{\rho_t\}_{t\in [0,1]}$ be a $L$-Lipschitz curve w.r.t. our metric $d$. Then there exists a potential $\u\in L^2(0,1;H^1(\rd\rho_t))$ such that
$$
\p_t\rho_t+\dive(\rho_t\nabla u_t)=\rho_tu_t\qquad \mbox{in }\mathcal{D}'((0,1)\times\R^d),
$$
and
$$
\|\u_t\|_{H^1(\rd\rho_t)}\leq L\qquad \mbox{a.e. }t\in (0,1).
$$
Conversely if $t\mapsto \rho_t\in \Mm$ is a narrowly continuous and $(\rho_t,\u_t)_{t\in [0,1]}$ solves the non-conservative continuity equation with $\|\u_t\|_{H^1(\rd\rho_t)}\leq L$ for a.e. $t\in (0,1)$, then $t\mapsto\rho_t$ is $L$-Lipschitz with respect to the distance $d$.
\end{theo}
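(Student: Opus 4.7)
The theorem contains two implications. The \emph{converse} follows directly from the time rescaling of Remark~\ref{rmk:time_scaling}: given $(\rho_t,\mathfrak u_t)_{t\in[0,1]}$ solving the non-conservative continuity equation with $\|\mathfrak u_t\|_{H^1(\rd\rho_t)}\leq L$ a.e., one restricts to any $[t_0,t_1]\subset[0,1]$ and rescales onto the unit interval; the resulting pair lies in $\mathcal A(\rho_{t_0},\rho_{t_1})$ and its energy evaluates to $(t_1-t_0)\int_{t_0}^{t_1}\|\mathfrak u_\tau\|^2_{H^1(\rd\rho_\tau)}\rd\tau\leq L^2(t_1-t_0)^2$, proving $d(\rho_{t_0},\rho_{t_1})\leq L(t_1-t_0)$.

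For the \emph{direct} direction, my plan is an Ambrosio--Gigli--Savar\'e partition-and-glue construction. For each $n\in\N$ and dyadic node $t_k^n=k2^{-n}$, the Lipschitz hypothesis gives $d(\rho_{t_k^n},\rho_{t_{k+1}^n})\leq L2^{-n}$; the definition of $d$ then supplies a near-optimal admissible path in $\mathcal A(\rho_{t_k^n},\rho_{t_{k+1}^n})$. Rescaling these via Remark~\ref{rmk:time_scaling} onto the physical intervals $[t_k^n,t_{k+1}^n]$ and concatenating yields a curve $(\rho^n_t,\mathfrak u^n_t)_{t\in[0,1]}$ that solves the continuity equation on $(0,1)$, agrees with $\rho$ at every dyadic time, and has total energy $\int_0^1\|\mathfrak u^n_t\|^2_{H^1(\rd\rho^n_t)}\rd t\leq L^2+o(1)$. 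Corollary~\ref{corm} and Lemma~\ref{lem:fundamental_dBL_estimate} supply a uniform mass bound, while Lemma~\ref{l:hk} applied cell by cell, combined with the Lipschitz hypothesis, yields $\sup_{t\in[0,1]}d(\rho^n_t,\rho_t)\to 0$; in particular $\rho^n_t\to\rho_t$ narrowly and uniformly in $t$.

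The main technical obstacle is passing to the limit in the potentials, which live in the varying Hilbert spaces $H^1(\rd\rho^n_t)$. I would recast everything through the scalar measure $\eta^n:=u^n_t\,\rd\rho^n_t\otimes\rd t$ and the vector measure $\mathbf m^n:=\nabla u^n_t\,\rd\rho^n_t\otimes\rd t$ on $(0,1)\times\R^d$: Cauchy--Schwarz against the uniform bounds on mass and energy controls their total variations, so along a subsequence $\eta^n\rightharpoonup^*\eta$ and $\mathbf m^n\rightharpoonup^*\mathbf m$. The classical joint lower semicontinuity of the Benamou--Brenier functional $(\rho,\mathbf m,\eta)\mapsto\int(|d\mathbf m/d\rho|^2+|d\eta/d\rho|^2)\,\rd\rho$ under weak-$*$ convergence (cf.\ \cite{AGS06,dolbeault_nazaret_new_2009}), together with the narrow convergence $\rho^n\to\rho$, identifies $\eta,\mathbf m$ as absolutely continuous with respect to $\rd t\otimes\rd\rho_t$ with densities $u_t,\nabla u_t$ assembling into $\mathfrak u\in L^2(0,1;H^1(\rd\rho_t))$ of squared norm at most $L^2$. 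Passing to the limit in the weak formulation of the continuity equation is then straightforward since test functions $\phi\in\mathcal C^\infty_c$ have derivatives in $\mathcal C_0$.

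Finally, to upgrade $\int_0^1\|\mathfrak u_t\|^2\rd t\leq L^2$ to the pointwise a.e.\ bound $\|\mathfrak u_t\|_{H^1(\rd\rho_t)}\leq L$, I would replace $\mathfrak u$ by the pointwise Hilbert projection onto the affine set of admissible potentials for $\rho$ at each $t$ (existence and measurability coming from convexity and standard selection arguments). Since the admissibility constraint is $t$-pointwise, this minimal $\mathfrak u$ is \emph{time-local}: its restriction to any $[s,t]$ remains the minimum-norm admissible potential there, so by the converse direction and the scaling of Remark~\ref{rmk:time_scaling} we get $\int_s^t\|\mathfrak u_\tau\|^2\rd\tau=d^2(\rho_s,\rho_t)/(t-s)\leq L^2(t-s)$, and Lebesgue differentiation concludes. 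The most delicate step of the whole plan is the joint lower-semicontinuity argument, which becomes clean once the potentials are identified with the vector-valued measures above.
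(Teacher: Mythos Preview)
Your converse direction is fine and matches the paper. For the direct implication you take a genuinely different route from the paper: a global partition--glue--compactness argument in the spirit of \cite[Thm.~8.3.1]{AGS06}, whereas the paper works \emph{pointwise in time}. It fixes $\psi\in\mathcal C^\infty_c(\R^d)$, observes that $\Psi(t)=\int\psi\,\rd\rho_t$ is Lipschitz (via \eqref{eq:dBL_loc_equivalent_d}), and at each differentiability point $t_0$ uses the $\eps$-optimal curves of Lemma~\ref{lem:eq:dist_approx_geodesic} between $\rho_{t_0}$ and $\rho_{t_0+h}$ to bound $|\Psi'(t_0)|\leq L\|\psi\|_{H^1(\rd\rho_{t_0})}$. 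Riesz representation in $H^1(\rd\rho_{t_0})$ then produces $\u_{t_0}$ with $\|\u_{t_0}\|_{H^1(\rd\rho_{t_0})}\leq L$ \emph{directly}, and a separability argument makes the exceptional set independent of $\psi$. This avoids any global compactness and yields the pointwise bound for free.

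Your plan has two gaps as written. First, the Benamou--Brenier lower semicontinuity you invoke produces limit densities $(r,v)\in L^2(\rd\rho_t)\times L^2(\rd\rho_t;\R^d)$, but there is no reason the pair should lie in $H^1(\rd\rho_t)$ as defined in this paper: that space carries the additional constraint of being a limit of couples $(\phi,\nabla\phi)$, which weak-$*$ convergence of $(\eta^n,\mathbf m^n)$ does not preserve. You must insert a Riesz step (exactly as the paper does in the proof of Theorem~\ref{theo:d_LSC_weak*} via Proposition~\ref{Ban}): the functional $\psi\mapsto\int(\nabla\psi\cdot v+\psi\,r)\rd\rho_t$ is bounded on $H^1(\rd\rho_t)$ by $\|(r,v)\|_{L^2\times L^2}$, and its Riesz representative $\u_t$ then solves the continuity equation with norm no larger.

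Second, your upgrade from $\int_0^1\|\u_t\|^2\rd t\leq L^2$ to the a.e.\ bound is stated incorrectly. The converse direction gives $d^2(\rho_s,\rho_t)\leq (t-s)\int_s^t\|\u_\tau\|^2\rd\tau$, which is the \emph{wrong} inequality for your purposes; your claimed equality $\int_s^t\|\u_\tau\|^2\rd\tau=d^2(\rho_s,\rho_t)/(t-s)$ is false in general (it would force the curve to be a geodesic on every subinterval). The correct fix is to localize the construction itself: for each dyadic subinterval your glued path already satisfies $\int_{t_k^n}^{t_{k+1}^n}\|\u^n_\tau\|^2\rd\tau\leq L^2\,2^{-n}(1+\eps_n)$, so summing and passing to the lsc limit with test functions supported in $(s,t)\times\R^d$ gives $\int_s^t\|\u_\tau\|^2\rd\tau\leq L^2(t-s)$ for all dyadic $s,t$, and Lebesgue differentiation concludes.
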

\noindent

Before proceeding with the proof we will need the following technical lemma:
\begin{lem}\label{lem:eq:dist_approx_geodesic}
For any $\rho_0,\rho_1\in \Mm$ and $\eps>0$ there exists a narrowly continuous curve $\rho_t\in \mathcal{C}_w([0,1],\Mm)$ connecting $\rho_0$ to $\rho_1$ and a potential $ \u\in L^2(0,1;H^1(\rd\rho_t))$, both depending on $\eps$, solving the non-conservative continuity equation and such that
\begin{equation}
\label{eq:dist_approx_geodesic_1}
\forall \tau\in [0,1]:\qquad d(\rho_0,\rho_\tau)\leq (1+\eps)d(\rho_0,\rho_1)
\end{equation}
with also
\begin{equation}
\label{eq:eq:dist_approx_geodesic_2}
d^2(\rho_0,\rho_1)\leq E[\rho; \u]=\int_0^1\left(\int_{\R^d}(|\nabla  u_t|^2+| u_t|^2)\rd \rho_t\right)\rd t\leq (1+\eps)^2d^2( \rho_0, \rho_1).
\end{equation}
\end{lem}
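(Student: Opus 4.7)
The statement asks for a path that is simultaneously almost energy-minimal and ``roughly constant-speed'' in the sense that the distance from $\rho_0$ to an intermediate point $\rho_\tau$ cannot exceed the total distance (up to the factor $1+\eps$). My plan is to simply pick a sufficiently good competitor in the definition of $d^2(\rho_0,\rho_1)$ and then invoke the H\"older estimate of Lemma~\ref{l:hk} to control $d(\rho_0,\rho_\tau)$ for all $\tau$ at once.

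More concretely: by the very definition of $d^2(\rho_0,\rho_1)$ as an infimum over the admissible set $\mathcal A(\rho_0,\rho_1)$, and using that $(1+\eps)^2>1$, I can select a single admissible pair $(\rho_t,\u_t)_{t\in[0,1]}\in\mathcal A(\rho_0,\rho_1)$ whose energy satisfies
\[
d^2(\rho_0,\rho_1)\;\le\;E[\rho;\u]\;\le\;(1+\eps)^2\, d^2(\rho_0,\rho_1).
\]
The left inequality is automatic from admissibility, and the right one is the defining property of a near-minimizer. This already yields the second assertion~\eqref{eq:eq:dist_approx_geodesic_2} of the lemma.

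For the first assertion~\eqref{eq:dist_approx_geodesic_1}, I apply Lemma~\ref{l:hk} to the very same path $(\rho_t,\u_t)$: since its total energy is $E[\rho;\u]$, we have
\[
d(\rho_0,\rho_\tau)\;\le\;\sqrt{E[\rho;\u]}\,|\tau-0|^{1/2}\;\le\;\sqrt{E[\rho;\u]}\;\le\;(1+\eps)\,d(\rho_0,\rho_1)
\]
for every $\tau\in[0,1]$, where in the last step I used the energy bound just established together with $\tau\le 1$. Combining the two displays gives both conclusions of the lemma with the same pair $(\rho,\u)$.

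There is no real obstacle here; the construction is essentially a one-line application of the definition of $d$ together with Lemma~\ref{l:hk}. The only point worth double-checking is that Lemma~\ref{l:hk}, as stated earlier, indeed applies to an arbitrary admissible pair of finite energy on $[0,1]$ and yields exactly the H\"older-$1/2$ control with constant $\sqrt{E}$; once that is granted, the bound $\sqrt{\tau}\le 1$ absorbs the time factor and no separate reparametrization argument is needed.
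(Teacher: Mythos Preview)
Your proof is correct and follows essentially the same approach as the paper: pick a near-minimizing admissible pair and bound $d(\rho_0,\rho_\tau)$ by the square root of its energy via a time-rescaling argument. The only difference is cosmetic---you invoke Lemma~\ref{l:hk} directly, whereas the paper re-derives that time-rescaling estimate inline; both arrive at $d^2(\rho_0,\rho_\tau)\le E[\rho;\u]\le(1+\eps)^2 d^2(\rho_0,\rho_1)$.
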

\begin{proof}
If $\{\rho^k_t, \u^k_t\}_{k\in \N}$ is any minimizing sequence in the definition of $d^2(\rho_0,\rho_1)$ then \eqref{eq:eq:dist_approx_geodesic_2} is obviously satisfied for large $k\geq k_0$ and we only have to check that \eqref{eq:dist_approx_geodesic_1} holds as well if $k$ is large enough.
Note that for any $k\in \N$ and fixed $\tau\in [0,1]$, a simple time reparametrization $s=\tau t$ gives an admissible curve $(\o\rho_s,\o \u_{s})_{s\in [0,1]}:=(\rho^k_{\tau s},\tau \u^k_{\tau s})_{s\in [0,1]}$ connecting $\rho_0$ to $\rho_{\tau}$ in time $s\in [0,1]$. By definition of our distance, changing variables, and because $\tau\leq 1$, we get
\begin{multline*}
d^2(\rho_0,\rho_\tau)
 \leq \int_0^1\left(\int_{\R^d}(|\nabla \o u_s|^2+|\o u_s|^2)\rd \o\rho_s\right)\rd s\\
 = \tau\int_0^\tau\left(\int_{\R^d}(|\nabla  u_t^k|^2+| u_t^k|^2)\rd\rho_t^k\right)\rd t\\
 \leq \int_0^1\left(\int_{\R^d}(|\nabla u^k_t|^2+| u^k_t|^2)\rd\rho_t\right)\rd t.
\end{multline*}
Then for $k$ large enough the last term above converges to $d^2(\rho_0,\rho_1)$ and the conclusion follows.
\end{proof}
\begin{proof}[Proof of Theorem~\ref{theo:lipschitz_curves}]
The argument is similar to \cite[Thm. 13.8]{villani08oldnew} and \cite[Thm. 8.3.1]{AGS06}.
Fix any $\psi\in \mathcal{C}^\infty_c(\R^d)$. Since we assumed that
$$
\forall \,t,s\in [0,1]:\qquad d(\rho_t,\rho_s)\leq L|t-s|
$$
and because locally our distance is Lipschitz-stronger than the bounded-Lipschitz one (see Remark~\ref{rmk:ll}), we see that
$$
t\mapsto \Psi(t):=\int_{\R^d}\psi \,\rd\rho_t
$$
is locally Lipschitz thus differentiable almost everywhere. Fix any time $t_0\in [0,1]$ where $\Psi$ is differentiable, and let $h_n\to 0$ with $t_0+h_n\in [0,1]$.
Taking $\eps=\frac{1}{n}$, $\tilde\rho_0=\rho_{t_0}$, and $\tilde\rho_1=\rho_{t_0+h_n}$, let $(\tilde{\rho}^n_t,\tilde \u^n_t)_{t\in [0,1]}$ be any curve from Lemma~\ref{lem:eq:dist_approx_geodesic} connecting $\rho_{t_0}$ to $\rho_{t_0+h_n}$ in time $t\in [0,1]$ while solving the non-conservative continuity equation and satisfying \eqref{eq:dist_approx_geodesic_1}\eqref{eq:eq:dist_approx_geodesic_2}. Using the Cauchy-Schwarz inequality (first in space and then in time) gives that
\begin{multline*}
\left|\frac{\Psi(t_0+h_n)-\Psi(t_0)}{h_n}\right|
 =\left|\frac{1}{h_n}\int_{\R^d}\psi(\rd\rho_{t_0+h_n}-\rd\rho_{t_0})\right|\\
  =\left|\frac{1}{h_n}\int_0^1\left(\int_{\R^d}(\nabla \psi\cdot\nabla \tilde u_t^n+\psi \tilde u_t^n) \rd \tilde\rho_t^n\right) \rd t\right|\\
  \leq \underbrace{\left(\int_0^1\left(\int_{\R^d}(|\nabla \psi|^2 +|\psi|^2) \rd \tilde\rho_t^n\right) \rd t\right)^{\frac{1}{2}}}_{:=A_n}\\
  \times
\underbrace{\frac{1}{h_n}\left(\int_0^1\left(\int_{\R^d}(|\nabla \tilde u^n_t|^2 +|\tilde u^n_t|^2) \rd \tilde\rho_t^n\right) \rd t\right)^{\frac{1}{2}}}_{:=B_n}.
\end{multline*}
From the previous Lemma~\ref{lem:eq:dist_approx_geodesic} and the definition of $\tilde\rho^n$ we first estimate, with $\eps=\frac{1}{n}$ in \eqref{eq:eq:dist_approx_geodesic_2},
$$
\limsup\limits_{n\to\infty} B_n\leq \limsup\limits_{n\to\infty}(1+1/n)\frac{d(\rho_{t_0},\rho_{t_0+h_n})}{h_n}\leq L.
$$
In order to handle the term $A_n$, let us define by duality the measures $\mu^n\in \Mm((0,1)\times\R^d)$ as
$$
\forall \phi\in\mathcal{C}_b((0,1)\times\R^d):\quad
\iint\limits_{(0,1)\times\R^d}\phi(t,x) \rd \mu^n(t,x):=\int_0^1\left(\int_{\R^d}\phi(t,x) \rd \tilde\rho^n_t(x)\right) \rd t.
$$
Using again Lemma~\ref{lem:eq:dist_approx_geodesic}, and in particular \eqref{eq:dist_approx_geodesic_1} with $\eps=1/n$, we see that for all fixed $t\in [0,1]$ there holds
$$
d(\rho_{t_0},\tilde{\rho}^n_t)\leq (1+1/n)d(\rho_{t_0},\rho_{t_0+h_n})\to 0
$$
when $n\to\infty$ (because the path $\rho_t$ is Lipschitz). By Theorem~\ref{theo:equiv} we get
$$
\forall\,t\in[0,1]:\qquad \tilde\rho^n_t\to \rho_{t_0}\quad\mbox{ narrowly}.
$$
A simple application of Lebesgue's dominated convergence shows that $\mu^n$ converges narrowly to the measure $\mu$ similarly defined by
$$
\forall \phi\in\mathcal{C}_b((0,1)\times\R^d):\quad
\iint\limits_{(0,1)\times\R^d}\phi(t,x) \rd \mu(t,x):=\int_0^1\left(\int_{\R^d}\phi(t,x) \rd \rho_{t_0}(x)\right) \rd t.
$$
Since we consider $\psi=\psi(x)$ only and the limit $\mu$ actually does not act in time we get
\begin{multline*}
A_n\to \left(\int_0^1\left(\int_{\R^d}(|\nabla \psi|^2 +|\psi|^2) \rd \rho_{t_0}\right) \rd t\right)^{\frac{1}{2}}\\
=\left(\int_{\R^d}(|\nabla \psi|^2 +|\psi|^2) \rd \rho_{t_0}\right)^{1/2}=\|\psi\|_{H^1(\rd \rho_{t_0})}.
\end{multline*}
Thus for fixed $\psi\in \mathcal{C}^\infty_c(\R^d)$ and at any point of differentiability $t_0\in [0,1]$ we get that
$$
\left|\frac{d\Psi}{dt}(t_0)\right|\leq L\|\psi\|_{H^1(\rd \rho_{t_0})}.
$$
The distribution $\zeta(t_0)\in \mathcal{D}'(\R^d)$ defined by $\langle\zeta(t_0),\psi\rangle_{\mathcal D',\mathcal D}=\frac{d\Psi}{d t}(t_0)$
is thus a continuous linear form on $H^1(\rd \rho_{t_0})$ with $\|\zeta(t_0)\|_{H^{-1}(\rd \rho_{t_0})}\leq L$.
By the Riesz representation theorem in $H^1(\rd \rho_{t_0})$ we can then find a unique potential $\u_{t_0}\in H^1(\rd \rho_{t_0})$ such that
\begin{equation} \label{e:risz}
\zeta(t_0)=\left(\u_{t_0},\cdot\right)_{H^1(\rd \rho_{t_0})}\quad \Rightarrow \quad -\dive(\rho_{t_0}\nabla u_{t_0})+\rho_{t_0}u_{t_0}=\zeta(t_0)\quad\mbox{in }\mathcal{D}'(\R^d),
\end{equation}
and 
$$
\|\u_{t_0}\|_{H^1(\rd \rho_{t_0})}= \|\zeta(t_0)\|_{H^{-1}(\rd \rho_{t_0})}\leq L
$$
(if $\rho_{t_0}=0$ in $\Mm(\R^d)$ we simply define $\u_{t_0}\equiv 0$). 

Recalling that $\Psi(t)=\int_{\R^d}\psi \rd \rho_t$ is a.e. differentiable, we see by \eqref{e:risz} that
\begin{equation}
\frac{d}{dt}\int_{\R^d}\psi \rd \rho_t=\int_{\R^d}(\nabla\psi \cdot \nabla u_t + \psi u_t) \rd \rho_t\qquad\mbox{a.e. }t\in (0,1).
\label{eq:diff_lipschitz_ae}
\end{equation}
A subtle issue is that the ``almost every $t\in (0,1)$'' set of differentiability of $\Psi$ might depend here on the choice of $\psi$. Arguing by density and separability as in \cite[Thm. 13.8]{villani08oldnew} we can conclude nonetheless that \eqref{eq:diff_lipschitz_ae} holds for any $\psi\in \mathcal{C}^\infty_c(\R^d)$, which is of course an admissible weak formulation for $\partial_t \rho_t+\dive(\rho_t\nabla u_t)=\rho_t u_t$.
Moreover by construction we have 
$$
\|\u_t\|_{H^1(\rd\rho_t)}\leq L \qquad \mbox{ a.e. }t\in (0,1)
$$
as desired.

Conversely, assume that $(\rho_t,\u_t)_{t\in [0,1]}$ solves the non-conservative continuity equation with $\|\u_t\|_{H^1(\rd\rho_t)}\leq L$ for a.e. $t\in (0,1)$ and that $\rho\in \mathcal{C}_w([0,1];\Mm)$.
Then clearly the total energy $E\leq L^2$.
For any $0\leq t_0\leq t_1$ it is easy to scale in time and connect $\rho_{t_0},\rho_{t_1}$ with cost $(t_1-t_0)\int_{t_0}^{t_1}\|\u_t\|^2_{H^1(\rd\rho_t)}\rd t$, thus $d^2(\rho_{t_0},\rho_{t_1})\leq L^2|t_1-t_0|^2$ and the proof is complete.
\end{proof}

\subsection{Lower semicontinuity of the metric with respect to the weak-$*$ topology}

\begin{defi} Let $(X,\varrho)$ be a metric space, $\sigma$ be a locally compact Hausdorff topology on $X$. We say that the distance $\varrho$ is sequentially lower semicontinuous with respect to $\sigma$ if
for all $\sigma$-converging sequences
$x_k\overset{\sigma}{\rightarrow}x$, $y_k\overset{\sigma}{\rightarrow}y$
one has \begin{equation*}\label{e:lsq1}\varrho(x,y)\leq \liminf_{k\to \infty} \varrho(x_k,y_k).\end{equation*} \end{defi}

\begin{theo} \label{theo:d_LSC_weak*} The distance $d$ is sequentially lower semicontinuous with respect to the weak-$*$ topology on $\Mm$.
\end{theo}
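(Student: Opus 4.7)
The plan is the standard compactness-and-lower-semicontinuity strategy familiar from optimal transport. Set $\ell := \liminf_{k \to \infty} d(\rho^k_0, \rho^k_1)$; we may assume $\ell < +\infty$ and, along a subsequence, $d(\rho^k_0, \rho^k_1) \to \ell$. Because weak-$*$ convergent sequences are norm-bounded in $\Mm$, the endpoint masses $m^k_0$ and $m^k_1$ are uniformly bounded. For each $k$ I would pick a near-optimal admissible path $(\rho^k_t, \mathfrak u^k_t)$ with energy $E_k \leq d^2(\rho^k_0, \rho^k_1) + 1/k$, so $E_k \to \ell^2$. Lemma~\ref{lem:fundamental_dBL_estimate} then yields a uniform mass bound $m^k_t \leq M$ along the full path.

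First I would linearise the problem by passing to momentum and reaction variables. On the product space $[0,1] \times \R^d$ introduce the measures
\begin{equation*}
\rd\mu^k := \rd t \otimes \rd\rho^k_t, \qquad \rd\mathfrak m^k := (\nabla u^k_t\, \rd\rho^k_t) \otimes \rd t, \qquad \rd\mathfrak r^k := (u^k_t\, \rd\rho^k_t) \otimes \rd t.
\end{equation*}
Fubini and Cauchy--Schwarz, combined with $m^k_t \leq M$, bound the total variations $|\mathfrak m^k|([0,1]\times\R^d), |\mathfrak r^k|([0,1]\times\R^d) \leq \sqrt{ME_k}$, while $\mu^k$ has mass at most $M$. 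The non-conservative continuity equation becomes the \emph{linear} identity $\partial_t \mu^k + \dive_x \mathfrak m^k = \mathfrak r^k$ in $\mathcal{D}'((0,1)\times\R^d)$. Passing to a further subsequence, Banach--Alaoglu delivers weak-$*$ limits $\mu^k \overset{*}{\rightharpoonup} \mu$, $\mathfrak m^k \overset{*}{\rightharpoonup} \mathfrak m$, $\mathfrak r^k \overset{*}{\rightharpoonup} \mathfrak r$ in the dual of $\mathcal C_0([0,1]\times\R^d)$, and the limit triple still satisfies the same linear continuity equation.

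Next I would invoke the joint convex lower semicontinuity of the Benamou--Brenier type functional
\begin{equation*}
F(\mu, \mathfrak m, \mathfrak r) = \int_{[0,1]\times\R^d} f\!\left(\tfrac{\rd\mu}{\rd\lambda}, \tfrac{\rd\mathfrak m}{\rd\lambda}, \tfrac{\rd\mathfrak r}{\rd\lambda}\right) \rd\lambda,
\end{equation*}
where $f(\rho, m, r) = (|m|^2 + |r|^2)/\rho$ for $\rho>0$, $f(0,0,0)=0$, and $f = +\infty$ otherwise. This density is convex and positively $1$-homogeneous, so $F$ is lower semicontinuous for weak-$*$ convergence of Radon measures (a classical fact, cf. the arguments in \cite{AGS06,dolbeault_nazaret_new_2009}). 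Since $F(\mu^k, \mathfrak m^k, \mathfrak r^k) = E_k$, one obtains $F(\mu, \mathfrak m, \mathfrak r) \leq \ell^2$; finiteness forces $\mathfrak m, \mathfrak r \ll \mu$. Disintegrating $\mu = \rd t \otimes \rd\rho_t$ and setting $\nabla u_t := \rd\mathfrak m/\rd\mu|_t$, $u_t := \rd\mathfrak r/\rd\mu|_t$, one recovers $\mathfrak u \in L^2(0,1; H^1(\rd\rho_t))$ with squared norm at most $\ell^2$.

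The main obstacle is to identify the endpoints of the limit curve and upgrade it to a narrowly continuous path. Here I would use the estimate \eqref{eq:fundamental_dBL_estimate}: for every fixed $\phi \in \mathcal C^1_b(\R^d)$ the map $t \mapsto \int \phi\, \rd\rho^k_t$ is uniformly (in $k$) $1/2$-H\"older and uniformly bounded. A diagonal Arzel\`a--Ascoli argument along a countable family dense in $\mathcal C_0(\R^d)$ produces a subsequence and a weak-$*$ continuous curve $\tilde\rho_t$ such that $\rho^k_t \overset{*}{\rightharpoonup} \tilde\rho_t$ \emph{uniformly} in $t$. The curve $\tilde\rho_t$ must coincide for a.e. $t$ with the disintegration $\rho_t$ of $\mu$, so we may take the path narrowly continuous. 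Applying the H\"older estimate with $\phi \equiv 1$ also gives uniform convergence of the masses $m^k_t \to \tilde m_t$, which combined with weak-$*$ continuity upgrades $\tilde\rho$ to a narrowly continuous curve. Finally, the endpoints $\tilde\rho_0, \tilde\rho_1$ are the weak-$*$ limits of $\rho^k_0, \rho^k_1$, hence $\rho_0$ and $\rho_1$ by hypothesis, so $(\tilde\rho_t, \mathfrak u_t)$ is admissible and $d^2(\rho_0, \rho_1) \leq F(\mu, \mathfrak m, \mathfrak r) \leq \ell^2$, as required.
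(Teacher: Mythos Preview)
Your strategy---linearising to momentum variables $(\mu,\mathfrak m,\mathfrak r)$ and invoking the joint weak-$*$ lower semicontinuity of the positively $1$-homogeneous action $F$---is the standard Dolbeault--Nazaret--Savar\'e route and is genuinely different from the paper's proof. The paper instead keeps the potentials $\mathfrak u^k$ as linear functionals $\varphi_k(\phi)=\int(\nabla u^k\cdot\nabla\phi+u^k\phi)\,\rd\mu^k$ on $\mathcal C^1_c((0,1)\times\R^d)$ equipped with the \emph{varying} seminorms $\|\phi\|_k=\|\phi\|_{L^2(0,1;H^1(\rd\rho^k_t))}$, applies a tailored Banach--Alaoglu variant (Proposition~\ref{Ban}) to extract a limit functional $\varphi_0$ with $\|\varphi_0\|_{(X,\|\cdot\|_0)^*}\leq\liminf c_k$, and then invokes Riesz representation on $L^2(0,1;H^1(\rd\rho_t))$ to produce $\mathfrak u$ \emph{directly} in the required space. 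The paper also uses the refined Arzel\`a--Ascoli theorem \cite[Prop.~3.3.1]{AGS06} (with $d_{BL}$ sequentially lsc under weak-$*$, Lemma~\ref{lem:d_BL_lsc_w*}) to get a $d_{BL}$-continuous limit curve in one stroke. Your approach is more portable; the paper's buys the correct function space for free.

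There is one real gap in your argument. After disintegrating you set $u_t:=\rd\mathfrak r/\rd\mu|_t$ and ``$\nabla u_t$''$:=\rd\mathfrak m/\rd\mu|_t$ and assert $\mathfrak u\in L^2(0,1;H^1(\rd\rho_t))$. But by the paper's definition this space is the \emph{closure} in $L^2(\rd\mu)\times L^2(\rd\mu;\R^d)$ of pairs $(\phi,\nabla\phi)$ with $\phi\in\mathcal C^1_b$, and in general this is a \emph{proper} closed subspace (e.g.\ when $\rho_t$ has a smooth density, not every $L^2$ vector field is a gradient). Your Radon--Nikodym pair $(u_t,V_t)$ lands only in the ambient $L^2\times L^2$. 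The fix is simple but must be stated: orthogonally project $(u_t,V_t)$ onto $H^1(\rd\rho_t)$ for a.e.\ $t$. Since every test pair $(\phi,\nabla_x\phi)$ already lies in $H^1(\rd\rho_t)$, the projection leaves the weak formulation of the continuity equation unchanged, and the norm can only decrease---so the energy bound $\leq\ell^2$ survives. This projection step is exactly what the paper's Riesz-representation manoeuvre delivers automatically.

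A smaller imprecision: your upgrade from weak-$*$ to narrow continuity of $\tilde\rho_t$ via ``mass convergence $m^k_t\to\tilde m_t$'' is incomplete, because under pointwise weak-$*$ convergence one only has $\tilde\rho_t(\R^d)\leq\liminf_k m^k_t$, with possible mass escape to infinity. It is cleaner to pass to the limit in~\eqref{eq:fundamental_dBL_estimate} for $\phi\in\mathcal C^1_c$ (using pointwise weak-$*$ convergence at the two times $s,t$) and recall that the supremum defining $d_{BL}$ may be restricted to such $\phi$: this gives $d_{BL}(\tilde\rho_s,\tilde\rho_t)\leq C|t-s|^{1/2}$ directly, hence narrow continuity.
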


\begin{proof}
Consider any two converging sequences
$$
\rho_0^k\underset{k\to\infty}{\rightarrow}\rho_0,\qquad  \rho_1^k\underset{k\to\infty}{\rightarrow}\rho_1\qquad \mbox{weakly-}*
$$
of finite Radon measures from $\Mm(\R^d)$. For each $k$, the endpoints $\rho_0^k$ and $ \rho_1^k$ can be joined by an admissible narrowly continuous path $(\rho^k_t,\u^k_t)_{t\in [0,1]}$ with energy
$$
E[\rho^k;\u^k]\leq d^2(\rho_0^k,\rho_1^k)+k^{-1}.
$$
Due to weak-$*$ compactness, the masses $m_0^k=d\rho^k_0(\R^d)$ and $m_1^k=d\rho^k_1(\R^d)$ are bounded uniformly in $k\in \N$. By Corollary \ref{corm1} the set $\cup_{k\in\N}\{\rho_0^k,\rho_1^k\}$ is bounded in $(\Mm,d)$, thus the energies $E[\rho^k;\u^k]$ and the masses $m_t^k=\rd\rho^k_t(\R^d)$ are bounded uniformly in $k\in \N$ and $t\in [0,1]$
$$
m_t^k\leq M
\quad\mbox{and}\quad
E[\rho^k;\u^k]\leq \o E.
$$
By the (classical) Banach-Alaoglu theorem with $\Mm\subset(\mathcal{C}_0)^*$, all the curves $(\rho_t^k)_{t\in [0,1]}$ lie in a fixed weak-$*$ sequentially relatively compact set $\mathcal{K}_M=\{\rho\in \Mm:\,\rd\rho(\R^d)\leq M\}$ uniformly in $k,t$. 
By the fundamental estimate \eqref{eq:fundamental_dBL_estimate} we get
$$
\left|\int_{\R^d}\phi(\rd\rho^k_t-\rd\rho^k_s)\right|\leq \sqrt{M \o E}|t-s|^{1/2}(\|\nabla\phi\|_{\infty}+\|\phi\|_{\infty})
$$
for all $\phi\in \mathcal{C}^1_b$, which implies
$$
\forall\,t,s\in [0,1],\,\forall k\in \N:\qquad d_{BL}(\rho^k_s,\rho^k_t)\leq C|t-s|^{1/2}.
$$
Invoking the completeness of $(\Mm(\R^d),d_{BL})$, the above uniform $1/2$-H\"{o}lder continuity w.r.t. $d_{BL}$, the sequential lower semicontinuity of $d_{BL}$ with respect to the weak-$*$ convergence (Lemma~\ref{lem:d_BL_lsc_w*} in the Appendix), and the fact that $\rho^k_t\in \mathcal{K}_M$, we conclude by a refined version of Arzel\`a-Ascoli theorem \cite[Prop. 3.3.1]{AGS06} that there exists a $d_{BL}$ (thus narrowly) continuous curve $(\rho_t)_{t\in [0,1]}$ connecting $\rho_0$ and $\rho_1$ such that
\begin{equation}
\label{eq:pointwise_CV_w*}
\forall t\in [0,1]:\qquad \rho^k_t\to \rho_t\quad\mbox{ weakly-}*
\end{equation}
along some subsequence $k\to\infty$ (not relabeled here).
Let $Q:=(0,1)\times\R^d$ and $\mu^k$ be the measure on $Q$ defined by duality as
$$
\forall \,\phi\in \mathcal{C}_c(Q):\qquad \int_Q\phi(t,x)\rd\mu^k(t,x)=\int_0^1\left(\int_{\R^d}\phi(t,.)\rd\rho^k_t\right)\rd t.
$$
Exploiting the pointwise convergence \eqref{eq:pointwise_CV_w*} and the uniform bound on the masses $m^k_t\leq M$, a simple application of Lebesgue's dominated convergence guarantees that
$$
\mu^k\to \mu^0\qquad \mbox{ weakly-}*\mbox{ in }{\Mm}(Q),
$$
where the finite measure $\mu^0\in \Mm(Q)$ is defined by duality in terms of the weak-$*$ limit $\rho_t=\lim \rho^k_t$ (as was $\mu^k$ in terms of $\rho^k_t$).
Then the energy bound reads
$$
\| \u^k\|^2_{L^2(0,1;H^1(\rd\rho^k_t)}=E[\rho^k;\u^k]\leq d^2(\rho_0^k,\rho_1^k)+k^{-1}\leq C.
$$

We are going to apply a variant of the Banach-Alaoglu theorem, Proposition \ref{Ban} in the appendix, in the space
$$
X=\mathcal{C}^1_c(Q).
$$
Namely, we set
$$
\|\phi\|=\|\nabla\phi\|_{L^\infty(Q)}+\|\phi\|_{L^\infty(Q)},
$$
$$
\|\phi\|_k=\left(\int_{Q}\left(|\nabla\phi|^2+ |\phi|^2 \right)\,\rd\mu^k\right)^{1/2},\qquad k=0,1,\dots,
$$
and define the linear forms
$$
\varphi_k(\phi)=\int_{Q}(\nabla u^k\cdot \nabla\phi+u^k \phi) \,\rd\mu^k,\qquad k=1,2,\dots.
 $$ 
The separability of $\mathcal{C}^1_c(Q)$, the weak-$*$ convergence of $\mu^k$, uniform boundedness of the masses of $\mu^k(Q)\leq M$, and the Cauchy-Schwarz inequality imply that the hypotheses of our Proposition \ref{Ban} are met with
$$
c_k:=\|\varphi_k\|_{(X,\|.\|_k)^*}\leq \|\u^k\|_{L^2(0,1;H^1(\rd\rho^k))}= \sqrt{E[\rho^k;\u^k]}\leq \sqrt{ d^2(\rho_0^k,\rho_1^k)+k^{-1}}.
$$ 
Consequently, there exists a continuous functional $\varphi_0$ on the space $(X,\|\cdot\|_0)$
 such that up to a subsequence 
$$
\forall \phi\in\mathcal{C}^1_c(Q):\qquad
\int_0^1\left(\int_{\R^d}\left\{\nabla u_t^k\cdot \nabla\phi(t,.)+ u_t^k\phi(t,.)\right\}\,\rd\rho^k_t\right) \rd t \underset{k\to\infty}{\rightarrow} \varphi_0(\phi)
$$
with moreover
\begin{equation}\label{e:phin}
\|\varphi_0\|_{(X,\|\cdot\|_0)^*}\leq \liminf_{k \to \infty} d(\rho_0^k,\rho_1^k).
\end{equation}

Let $N_0\subset X$ be the kernel of the seminorm $\|\cdot\|_0$.
By the Riesz representation theorem, the dual $(X,\|\cdot\|_0)^*=(X/N_0,\|\cdot\|_0)^*$ can be isometrically identified with the completion $\overline {X/N_0}$ of ${X/N_0}$ with respect to  $\|\cdot\|_0$, which is exactly $L^2(0,1;H^1(\rd\rho_t))$.
As a consequence there exists $\u=(u,\nabla u)\in L^2(0,T;H^1(\rd\rho_t))$ such that
$$
\varphi_0(\phi)=\int_{Q}\left(\nabla u\cdot \nabla\phi+ u \phi\right) \,\rd\mu^0=\int_0^1\left(\int_{\R^d}\left(\nabla u\cdot \nabla\phi+ u \phi\right)\rd\rho_t\right)\rd t
 $$
 and
 $$
 \|\u\|_{L^2(0,1;H^1(\rd\rho_t))}=\|\varphi_0\|_{(X,\|\cdot\|_0)^*},
 $$
 and it is straightforward to check that $(\rho,\u)$ is an admissible curve joining $\rho^0,\rho^1$ (the above convergence is enough to take the limit in the weak formulation of the non-conservative continuity equation).
Recalling \eqref{e:phin}, it remains to take into account that by the definition of our distance
\begin{align*}
d^2(\rho_0,\rho_1)\leq E[\rho;\u]=\|\u\|^2_{L^2(0,1;H^1(\rd\rho_t))}=\|\varphi_0\|_{(X,\|\cdot\|_0)^*}^2\leq \liminf\limits_{k\to\infty}d^2(\rho^k_0,\rho^k_1).
\end{align*}
\end{proof}
%
%%%%%%%%%%%%%%%%%%%%%%%%%%%%%%%%%%%%%%%%%%%%%%%%%%%%%%%%%%%%%%%%%%%%%%%%%%%%%
\subsection{Existence of geodesics}
We are now in position of proving an important result, namely the existence and characterization of geodesics for our metric structure. We give two proofs: one is more constructive and inspired by the optimal transport theory, and another one is shorter and more abstract. 

\begin{theo}
\label{theo:exist_geodesics}
$(\Mm,d)$ is a geodesic space, and for all $\rho_0,\rho_1\in \Mm$ the infimum in \eqref{e:mini} is always a minimum. Moreover this minimum is attained for a (narrowly continuous) curve $\rho$ such that $d(\rho_t,\rho_s)=|t-s|d(\rho_0,\rho_1)$ and a potential $\u\in L^2(0,1;H^1(\rd\rho_t))$ such that $\|\u_t\|_{H^1(\rd\rho_t)}=cst=d(\rho_0,\rho_1)$ for a.e. $t\in [0,1]$.
\end{theo}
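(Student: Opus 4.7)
The plan is to combine the compactness and lower semicontinuity analysis already carried out in the proof of Theorem~\ref{theo:d_LSC_weak*} with a standard constant-speed reparametrization argument based on equality cases in the Cauchy-Schwarz inequality.

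For existence, I fix $\rho_0,\rho_1\in \Mm$ and choose any minimizing sequence $(\rho^k,\u^k)_{k\in\N}\subset\mathcal{A}(\rho_0,\rho_1)$, i.e.\ with $E[\rho^k;\u^k]\to d^2(\rho_0,\rho_1)$. This is exactly the setting of Theorem~\ref{theo:d_LSC_weak*}, specialized to constant endpoints $\rho^k_0\equiv\rho_0$, $\rho^k_1\equiv\rho_1$: the uniform mass bound from Corollary~\ref{corm1}, the fundamental estimate \eqref{eq:fundamental_dBL_estimate}, and the refined Arzel\`a-Ascoli theorem produce a (subsequential) pointwise weak-$*$/narrow limit $\rho\in\mathcal{C}_w([0,1];\Mm)$ connecting $\rho_0$ to $\rho_1$. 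The Banach-Alaoglu variant in Proposition~\ref{Ban} and the Riesz representation identification of the dual space with $L^2(0,1;H^1(\rd\rho_t))$ then extract a limiting potential $\u\in L^2(0,1;H^1(\rd\rho_t))$ such that $(\rho,\u)\in\mathcal{A}(\rho_0,\rho_1)$ and
\[
E[\rho;\u]\leq \liminf_{k\to\infty}E[\rho^k;\u^k]=d^2(\rho_0,\rho_1).
\]
The reverse inequality is immediate from the definition of $d^2$, so equality holds and the infimum is attained.

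To upgrade this minimizer to a constant-speed geodesic, set $\ell(t):=\|\u_t\|_{H^1(\rd\rho_t)}$. For $0\leq s\leq t\leq 1$, restricting $(\rho,\u)$ to $[s,t]$ and rescaling time as in Remark~\ref{rmk:time_scaling} gives $d^2(\rho_s,\rho_t)\leq (t-s)\int_s^t \ell^2(\tau)\,\rd\tau$. For any partition $0=t_0<t_1<\dots<t_n=1$, applying first the triangle inequality, then this sub-interval bound, and finally Cauchy-Schwarz on the sum (with $a_i=t_i-t_{i-1}$, $b_i=\int_{t_{i-1}}^{t_i}\ell^2$) yields
\[
d(\rho_0,\rho_1)\leq \sum_{i=1}^{n}d(\rho_{t_{i-1}},\rho_{t_i})\leq \sum_{i=1}^{n}\sqrt{a_i b_i}\leq \sqrt{\textstyle\sum_i a_i}\,\sqrt{\textstyle\sum_i b_i}=\sqrt{E[\rho;\u]}=d(\rho_0,\rho_1).
\]
All inequalities must therefore be equalities for every partition. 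The Cauchy-Schwarz equality case forces $b_i/a_i$ to be constant, i.e.\ $\int_{t_{i-1}}^{t_i}\ell^2=E(t_i-t_{i-1})$, and Lebesgue differentiation then gives $\ell^2(t)\equiv E=d^2(\rho_0,\rho_1)$ a.e.\ in $[0,1]$. Equality in the sub-interval bound finally yields $d(\rho_s,\rho_t)=|t-s|\,d(\rho_0,\rho_1)$, as required.

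The shorter, more abstract alternative is to invoke general metric-space theory (e.g.\ \cite[Thm.~2.1.7]{AGS06}) once we know that the Benamou-Brenier-type functional $E[\rho;\u]$ is weak-$*$ lower semicontinuous and coercive on admissible paths: this directly produces an action-minimizing, hence up-to-reparametrization constant-speed, geodesic. Either way, the only genuine technical hurdle is the one already handled in the proof of Theorem~\ref{theo:d_LSC_weak*}, namely pushing the weak-$*$ limit of the potentials $\u^k$ into the correct Hilbert space $L^2(0,1;H^1(\rd\rho_t))$ associated with the limiting measure $\rho$; given that identification, the remainder of the argument is purely calculus-of-variations and elementary metric geometry.
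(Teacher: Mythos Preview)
Your argument is correct, and it reaches the same conclusion as the paper's Proof~1 but with a genuinely different organization. The paper first applies the arc-length reparametrization Lemma~\ref{lem:Lipschitz_time_arclength} to the minimizing sequence \emph{before} passing to the limit; this produces uniform $d$-Lipschitz curves, so that after Arzel\`a--Ascoli and the lower semicontinuity of $d$ one directly obtains the limiting equality $d(\rho_{t_0},\rho_{t_1})=|t_1-t_0|\,d(\rho_0,\rho_1)$, and the potential $\u$ is then recovered \emph{a posteriori} from the Lipschitz-curve characterization Theorem~\ref{theo:lipschitz_curves}. You instead extract both $\rho$ and $\u$ simultaneously by rerunning the full proof of Theorem~\ref{theo:d_LSC_weak*} (including Proposition~\ref{Ban}), and only \emph{afterwards} derive the constant-speed property via your partition/Cauchy--Schwarz equality-case argument. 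Your route is a bit more self-contained in that it bypasses both Lemma~\ref{lem:Lipschitz_time_arclength} and Theorem~\ref{theo:lipschitz_curves}; the paper's route, on the other hand, makes the Lipschitz structure of the minimizing sequence explicit earlier and cleanly separates the analytic compactness step from the construction of the limiting potential. Your brief ``abstract alternative'' is in the same spirit as the paper's Proof~2 via midpoints and the Hopf--Rinow variant Lemma~\ref{hrt}.
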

%s
In Section \ref{section:Riemannian_structure} we will show that $(\Mm,d)$ can be viewed as a formal Riemannian manifold, with tangent plane $T_{\rho}\Mm=\{\frac{d \rho}{dt}=-\dive(\rho\nabla u)+\rho u:\,u\in H^1(\rd \rho)\}$ and $\left\|\frac{d\rho}{dt}\right\|_{T_{\rho}\Mm}=\|\u\|_{H^1(\rd\rho)}$.
In this perspective Theorem~\ref{theo:exist_geodesics} can be interpreted as $\left\|\frac{d\rho_t}{dt}\right\|_{T_{\rho_t}\Mm}=cst=d(\rho_0,\rho_1)$, which should be expected along constant-speed geodesics $(\rho_t)_{t\in [0,1]}$ connecting $\rho_0,\rho_1$.

\begin{proof}[Proof 1 via time reparametrization]
Fix any $\rho_0,\rho_1$ and let $\left(\rho^k_t,\u^k_t\right)_{t\in [0,1]}$ be a minimizing sequence in $E[\rho^k;\u^k]=\int_0^1\|\u^k_t\|^2_{H^1(\rd\rho^k_t)}\rd t\underset{k\to\infty}{\rightarrow}d^2(\rho_0,\rho_1)$.
We first claim that we can assume without loss of generality
\begin{equation}
\label{eq:d(rho^k_t0,rho^k_t1)}
\forall \,0\leq t_0\leq t_1\leq 1:\qquad
d^2(\rho^k_{t_0},\rho^k_{t_1})\leq |t_0-t_1|^2 E[\rho^k;\u^k].
\end{equation}
Indeed using a simple arclength reparametrization (Lemma~\ref{lem:Lipschitz_time_arclength} in the Appendix) we can assume that $\|\u^k_t\|^2_{H^1(\rd\rho^k_t)}=cst=E[\rho^k;\u^k]$ is constant in time for all $k\in \N$.
Scaling in time $t=t_0+(t_1-t_0)s$ as before, we get by definition of our distance and Remark~\ref{rmk:time_scaling} that $d^2(\rho^k_{t_0},\rho^k_{t_1})\leq|t_1-t_0|\int_{t_0}^{t_1}\|\u^k_t\|^2_{H^1(\rd \rho^k_t)} \rd t=(t_1-t_0)^2E[\rho^k;\u^k]$.\\
Now because the energies $E[\rho^k;\u^k]$ are bounded \eqref{eq:d(rho^k_t0,rho^k_t1)} shows that the sequence of curves $\{t\mapsto \rho^k_t\}$ are uniformly $1/2$-H\"older continuous with respect to our metric $d$, and arguing as before the masses $m^k_t\leq M$ are bounded uniformly in $t\in [0,1]$ and $k\in \N$, thus $\rho_t^k$ lie in a fixed weakly-$*$ relatively sequentially compact set $\mathcal{K}_M=\{\rho:\,\rd \rho(\R^d)\leq M\}$.
By Theorem~\ref{theo:d_LSC_weak*} we know that $d$ is lower semi-continuous with respect to weak-$*$ convergence, and applying again the refined Arzel\`a-Ascoli theorem \cite[Prop. 3.3.1]{AGS06} we conclude that there exists a $d$-continuous (thus narrowly continuous) curve $\rho$ such that
$$
\forall \,t\in [0,1]:\qquad \rho^k_t\to\rho_t
\quad\mbox{weakly-}*.
$$
From \eqref{eq:d(rho^k_t0,rho^k_t1)} this also implies
$$
d^2(\rho_{t_0},\rho_{t_1})\leq \liminf\limits_{k\to\infty}|t_0-t_1|^2 E[\rho^k;\u^k]=|t_1-t_0|^2d^2(\rho_0,\rho_1)
$$
for all $t_0,t_1\in [0,1]$.
By the triangular inequality we conclude that in fact
$$
\forall\, t_0,t_1\in [0,1]:\qquad \qquad d(\rho_{t_0},\rho_{t_1})=|t_1-t_0|d(\rho_0,\rho_1),
$$
and in particular the curve $t\mapsto\rho_t$ is $L$-Lipschitz with $L=d(\rho_0,\rho_1)$. 

Applying Theorem~\ref{theo:lipschitz_curves} we see that there exists a potential $\u_t$ solving the non-conservative continuity equation such that $\|\u_t\|_{H^1(\rd\rho_t)}\leq L$ a.e. $t\in (0,1)$, and because
$$
L^2 =d^2(\rho_0,\rho_1)\leq E[\rho;\u]=\int_0^1\|\u_t\|_{H^1(\rd\rho_t)}^2\mathrm{d}t\leq L^2
$$
we conclude that in fact $\|\u_t\|_{H^1(\rd\rho_t)}=L=d(\rho_0,\rho_1)$ a.e. $t\in (0,1)$.
\end{proof}

\begin{proof}[Proof 2 via midpoints] 
We first observe from the definition of our distance that $(\Mm,d)$ is a length space (e.g. by an application of the almost midpoint criterion, \cite[Thm. 2.4.16(2)]{Bur}.). By Corollary \ref{corm} and the (classical) Banach-Alaoglu theorem,  the weak-$*$ topology is $d$-boundedly compact. Now Lemma \ref{hrt} (variant of the Hopf-Rinow theorem in the Appendix) together with Theorem \ref{theo:equiv} and Theorem~\ref{theo:d_LSC_weak*} imply that $(\Mm,d)$ is a geodesic space. The existence and claimed properties of the minimizer follow from Theorem~\ref{theo:lipschitz_curves} exactly as in Proof 1.
\end{proof}
\begin{rmk}
The geodesics can be non-unique, see Section \ref{ss3}. 
\end{rmk}
%
%
%
%%%%%%%%%%%%%%%%%%%%%%%%%%%%%%%%%%%%%%%%%%%%%%%%%%%%%%%%%%%%%%%%%%%%%%%%%%%%%%%%%%%%%%%%%%%%%%%%%%%%%%%5
%%%%%%%%%%%%%%%%%%%%%%%%%%%%%%%%%%%%%%%%%%%%%%%%%%%%%%%%%%%%%%%%%%%%%%%%%%%%%%%%%%%%%%%%%%%%%%%%%%%%%%%5
\section{Underlying geometry}
\label{section:Riemannian_structure}
We show here that our distance $d$ endows $\Mm(\R^d)$ with a formal Riemannian structure. We closely follow Otto's approach \cite{otto01}, which was originally developed for the optimal transport of probability measures in the Wasserstein metric space $(\mathcal{P}_2,\mathcal{W}_2)$. Refer the reader to the particularly clear exposition in \cite{villani03topics,villani08oldnew}.
%
%%%%%%%%%%%%%%%%%%%%%%%%%%%%%%%%%%%
\subsection{Otto's Riemannian formalism}
Let us recall that if $\mathcal{M}$ is a smooth differential manifold then the tangent plane $T_x\mathcal{M}$ at a point $x\in M$ can be viewed as the vector space of tangent vectors $\left.\frac{dx_t}{dt}\right|_{t=0}$ of all $\mathcal{C}^1$ curves $t\mapsto x_t\in \mathcal{M}$ passing through $x(0)=x$.
By Theorem~\ref{theo:lipschitz_curves} and Theorem~\ref{theo:exist_geodesics} we know that, given any fixed endpoint $\rho\in \Mm$, there always exists a constant-speed geodesic $\rho_t$ connecting $\rho|_{t=0}=\rho$ to arbitrary $\nu\in \Mm$ and parametrized as
$$
\left\{
\begin{array}{ll}
\partial_t\rho_t=-\dive(\rho_t \nabla u_t)+\rho_t u_t & \mbox{in }\mathcal{D'}((0,1)\times\R^d)\\
\|\u_t\|_{H^1(\rd\rho_t)}=cst=d(\rho,\nu) & \mbox{a.e. }t\in (0,1).
\end{array}
\right.
$$
Since $t\mapsto\rho_t$ is a constant-speed geodesic in $(\Mm,d)$ it should be a $\mathcal{C}^1$ curve, and, with a slight abuse of notation, we naturally identify the $\partial_t\rho_t$ distributional term above with a tangent vector $\zeta_t=\frac{d\rho_t}{dt}\in T_{\rho_t}\Mm$.
According to $\zeta_t=-\dive(\rho_t\nabla u_t)+\rho u_t$ we see that the tangent vector $\zeta_t=\frac{d\rho_t}{dt}$ corresponds to a potential $\u_t=(u_t,\nabla u_t)\in H^1(\rd\rho_t)$.
Assuming for simplicity that $\frac{d\rho_t}{dt},\u_t$ can be somehow evaluated at $t=0^+$, we see that any geodesic passing through the fixed endpoint $\rho|_{t=0}=\rho$ gives a tangent vector $\left.\frac{d\rho_t}{dt}\right|_{t=0}$, which thus corresponds to some $\u|_{t=0}\in H^1(\rd\rho)$. This strongly suggests to define the tangent space in terms of potentials as
$$
T_\rho\Mm:=\{\zeta=-\dive(\rho\nabla u)+\rho u:\quad \u=(u,\nabla u)\in H^1(\rd\rho)\}
$$
and
$$
\|\zeta\|_{T_\rho\Mm}:=\|\u\|_{H^1(\rd\rho)}=\left(\int_{\R^d}(|\nabla u|^2+|u|^2)\rd\rho\right)^{1/2}.
$$
Observe that, given $\zeta$ and ignoring all smoothness issues, the elliptic PDE
\begin{equation} \label{e:epde}
-\dive(\rho\nabla u)+\rho u=\zeta
\end{equation}
is coercive in $H^1(\rd\rho)$ so the correspondence between tangent vectors $\zeta$ and potentials $\u$ is at least formally uniquely defined.
By polarization this automatically defines the Riemannian metric on $T_\rho\Mm$
$$
\left<\frac{d \rho}{d t_1},\frac{d \rho}{d t_2}\right>_{\rho}:=\left<\u_1,\u_2\right>_{H^1(\rd\rho)}=\int_{\R^d}(\nabla u_1\cdot \nabla u_2+u_1u_2)\rd\rho,
$$
where the subscript $\rho$ in the left-hand side highlights the dependence on the base point $\rho\in \Mm$ in the weighted $H^1(\rd\rho)$ scalar product. A formal but useful remark is that 
\begin{multline} \label{usef}
\left<\frac{d \rho}{d t_1},\frac{d \rho}{d t_2}\right>_{\rho}=\int_{\R^d}(\nabla u_1\cdot \nabla u_2+u_1u_2)\rd\rho\\
=\int_{\R^d}u_2\zeta_1\rd x =\int_{\R^d} u_2\partial _{t_1}\rho \,\rd x
=\int_{\R^d}u_1\zeta_2\rd x=\int_{\R^d} u_1\partial _{t_2}\rho\,\rd x .
\end{multline}

Observe that with this formalism our definition \eqref{e:mini} simply reads
$$
d^2(\rho_0,\rho_1)=\inf\left\{\int_0^1\left\|\frac{d\rho_t}{dt}\right\|_{T_{\rho_t}\Mm}^2\,\mathrm{d}t\right\},
$$
where the infimum is taken over all admissible curves connecting $\rho_0,\rho_1$.
This is nothing but the classical Lagrangian formulation of (squared) distances in a reasonably smooth Riemannian manifold.

\begin{rmk}
It is worth pointing out that $\rho=0$ is a degenerate point in our pseudo-manifold $\Mm$ since the tangent space $T_0\Mm$ is zero-dimensional.
\end{rmk}

%
%%%%%%%%%%%%%%%%%%%%%%%%%%%%%%%%%%%%%%%%%%%%%%%%%%%%%%%%%%%%%
\subsection{Differential calculus in $(\Mm,d)$ and induced dynamics}
Now that we defined a Riemannian structure on $\Mm$ in terms of the distance $d$, one would like to differentiate functionals $\mathcal F:\Mm\to \R$, or in other words compute gradients ''$\grad_d\mathcal{F}(\rho)$`` with respect to this Riemannian structure.
This has already been done in the setting of optimal transport in the Wasserstein space $(\mathcal{P}_2,\mathcal{W}_2)$, see \cite{otto01,villani03topics} and references therein.
The approach is once again very similar, and still formal.

Let us recall that if $(\mathcal{M},g)$ is a smooth Riemannian manifold and $\mathcal{F}:\mathcal{M}\to\R$, the Riemannian metric tensor $g$ and the Riesz representation theorem allow to pull back the differential $D\mathcal{F}(x)\in T^*_x\mathcal{M}$, which is a cotangent vector, to a unique tangent vector $\grad \mathcal{F}(x)\in T_x\mathcal{M}$:
For all $\mathcal{C}^1$ curves $t\mapsto x_t$ passing through $x|_{t=0}=x$ with tangent vector $\left.\frac{dx_t}{dt}\right|_{t=0}=\zeta$ the gradient $\grad\mathcal{F}$ is defined by duality via the chain rule
$$
\left.\frac{d}{dt}\mathcal{F}(x_t)\right|_{t=0}=\left<D\mathcal{F}(x),\zeta \right>_{T^*_x\mathcal{M},T_x\mathcal{M}}=g_x(\grad \mathcal{F}(x),\zeta).
$$

Mimicking this computation and washing out all smoothness issues, it is easy to deduce here that the gradients of functionals $\mathcal{F}:\rho\in \Mm\to\R$ are at least formally given by
\begin{equation}
\label{eq:formula_gradient_d}
\begin{array}{c}
 \grad_d\mathcal{F}(\rho)=-\dive\left(\rho\nabla\frac{\delta \mathcal{F}}{\delta\rho}\right)+\rho\frac{\delta \mathcal{F}}{\delta\rho}\\
 \left\|\grad_d\mathcal{F}(\rho)\right\|_{T_\rho\Mm}=\left\|\frac{\delta\mathcal{F}}{\delta\rho}\right\|_{H^1(\rd\rho)}
 \end{array},
\end{equation}
 where $\frac{\delta \mathcal{F}}{\delta\rho}$ denotes the first variation with respect to the usual $L^2$ Euclidean structure (e.g., if $\mathcal{F}(\rho)=\int U(x)d\rho(x)$, then $\frac{\delta \mathcal{F}}{\delta\rho}=U$).\\

With gradients available one can obtain induced dynamics on $(\Mm,d)$, the main examples being gradient flows and Hamiltonian flows. More precisely, given some functional $\mathcal{F}(\rho)$ on $\Mm$ the $\mathcal{F}$-gradient flow is defined as
$$
\frac{d\rho}{dt}=-\grad_d\mathcal{F}(\rho).
$$
Similarly, Hamiltonian systems read
$$
\frac{d\rho}{dt}=-J\cdot\grad_d\mathcal{F}(\rho),
$$
where $J$ is a given Hamiltonian antisymmetric operator (i-e a closed $2$-form on $T_\rho\Mm$). As an application of this Riemannian formalism we will study in Section~\ref{section:animals} a particular gradient flow, and exploit the formal structure to derive new long-time convergence results by means of entropy-entropy dissipation inequalities.\\

Second order differential calculus can also be developed: the Hessian can be formally defined as
\begin{equation}
\label{eq:Hessians_geodesics}
\left<\operatorname{Hess}_d\mathcal{F}(\rho)\cdot\zeta,\zeta\right>_\rho=\left.\frac{d^2}{dt^2}\right|_{\mbox{geod}}\mathcal{F}(\rho_t),
\end{equation}
where differentiation should be performed along a geodesic path $\rho_t$ passing through $\rho$ with tangent vector $\zeta$ at time $t=0$. In order to exploit this formula one needs to compute the geodesics with prescribed initial position and velocity, and the first issue is thus the very existence of these objects. As for the practical computation itself, consider for example $\mathcal{F}(\rho)=\int_{\R^d} \rd\rho$: the second time derivative is
\begin{multline*}
\int_{\R^d} \partial^2_{tt} \rho_t=\int_{\R^d} {\partial}_t(-\nabla\cdot(\rho_t \nabla u_t)+\rho_t u_t)=\int_{\R^d} {\partial}_t(\rho_t u_t)\\
=\int_{\R^d} u_t\partial_t \rho_t + \rho_t \partial_t u_t= \int _{\R^d}\rho_t (|\nabla u_t|^2+u_t^2 +\partial_t u_t).
\end{multline*} 
Thus the second and key issue is to find an equation yielding an expression for $\partial_t u_t$ or $\p_t(\rho_t u_t)$ in terms of $\rho$, $u$ and their spatial derivatives.
The next two subsections are devoted to these questions. We will see that in the definition of the Hessian it is natural to replace the metric geodesics by some objects which we call \emph{trajectory geodesics}.
The latter possess fine minimization properties, exist for given initial value $\rho$ and velocity $\zeta$, and are described by nice PDEs, which, in particular, provide the required expressions of $\partial_t u_t$ and ${\partial}_t(\rho_t u_t)$ in terms of $\rho$ and $u$.
%
%
%%%%%%%%%%%%%%%%%%%%%%%%%%%%%%%%%%%%%
\subsection{Trajectory geodesics}

\label{tdg}
In order to introduce the concept of trajectory geodesic, we use a heuristic particle interpretation of the considered dynamics, which better illustrates the underlying ideas. 

Think of movement of charged particles in $\R^d$, whose mass is fixed but whose charge can possibly evolve according to some a priori given law described later on. 
Assume first that we have a finite number $i=1\ldots N$ of moving particles with position $x_t(i)$ and charge $k_t(i)$. In terms of densities and curves in $\Mm$ this dynamics can be formalized as
\begin{equation*}
\label{eq:ansatz}
\rho_t=\sum\limits_{i=1}^N k_t(i)r_t(i)\qquad \mbox{with}\qquad r_t(i):=\delta_{x_t(i)}.
\end{equation*}
Given a reasonably smooth potential $u_t(x)$, the function $\rho_t$ is the solution of the non-conservative continuity equation $\p_t\rho_t+\dive(\rho_t\nabla u_t)=\rho_t u_t$ if and only if
$$
\frac{d}{dt}x_t(i)=\nabla u_t(x_t(i))
$$
and 
$$
\frac d {dt }(\log k_t(i))=u_t(x_t(i))
$$
along the trajectory of each particle (this claim can be checked employing e.g. \cite[Prop. 3.6]{maniglia2007probabilistic}, see however Remark \ref{r:npartt} below).
Define the energy of each individual particle by
 \begin{equation}
 \label{e:eip}
 E_i=  \int_0^1 k_t(i) \left[ \left|\frac d {dt} (\log k_t(i))\right|^2 +\left|\frac d {dt}\,x_t(i)\right|^2\right] \,dt.
 \end{equation}
 Then the total energy sums as
\begin{equation}
\label{e:esum}
E=\sum\limits_{i=1}^N E_i=\int_0^1\left(\int_{\R^d}(|\nabla u_t|^2+|u_t|^2)\rd\rho_t\right)\rd t .
\end{equation}

If the pair $(\rho_t,\u_t)_{t\in [0,1]}$ were to be a geodesic (in the sense of Theorem \ref{theo:exist_geodesics}), then it should minimize the total energy $E$ for fixed $\rho_0$ and $\rho_1$, and thus also minimize each of the $E_i$'s: more precisely, there should hold
\begin{multline}
\label{e:min01}
E_i=\min\Bigg\{ \int_0^1 \tilde k_t \left[ \left|\frac d {dt} (\log \tilde k_t)\right|^2 +\left|\frac d {dt}\,\tilde x_t\right|^2\right] \,dt:\\ \tilde x_0(i)=x_0(i),\ \tilde x_1(i)=x_1(i), \quad \tilde k_0(i)=k_0(i), \tilde k_1(i)=k_1(i)\Bigg\}.
\end{multline}
It turns out to be relevant to replace the condition of being a geodesic by that of being a minimizer of each $E_i$ as in \eqref{e:min01}.
\begin{rmk} \label{r:npartt}
The above discussion is legitimate, e.g., if we have strictly $N$ different particles for all times, i-e when the trajectories of the particles do not overlap and no mass splitting or coalescence occur.
Much more general scenarios allowing for trajectory crossing are certainly possible, provided that the potential $u$ loses spatial smoothness and the non-conservative continuity equation is understood in the weak sense.
This is of course a delicate issue, which we shall ignore below by staying at the formal level.
\end{rmk}

We carry out now a similar formal analysis in a more general framework, considering general measures $\rho_t$ as the superposition of possibly infinitely many indivisible infinitesimal particles.

Let $(\rho_t,\u_t)_{t\in [0,1]}$ be a generic admissible path in the sense of Definition \ref{d:metr}, and assume that the velocity field $\nabla u_t$ is smooth enough.
Let $T_t$ be $t$-flow generated by the characteristic ODE $\frac{dy}{dt}=\nabla u_t(y)$, and define $r_t:\R^d\to \R$ as the pushforward 
\begin{equation}
\label{rpart}
r_t=T_t\#\rho_0.
\end{equation}
Then $r_t$ is a curve in $\Mm$ with constant mass $m_t=\rd\rho_0(\R^d)$, and
\begin{equation}
\label{geod2}
\p_t r_t+\dive (r_t\nabla u_t)=0.
\end{equation}  

As in the previous discrete setting, we think of $\rho_t$ as the charge density of moving charged particles and decompose it as a product
$$
\rho_t(x)=r_t(x)k_t(x).
$$
Here $r_t(x)$ defined by \eqref{rpart} is the particle density at time $t$ (density of particles without thinking of their charge), while $k_t(x)$ is the charge initially normalized as $k_0(x)\equiv 1$. When the particles move along their trajectories, they continuously change their charge according to
\begin{equation}
\label{geod1} 
\p_t\rho_t+\dive (\rho_t \nabla u_t)=\rho_t u_t.
\end{equation} 

Denote the Lagrangian time derivative by
$$
\frac{D}{Dt}=\p_t{} + \nabla u \cdot\nabla.
$$
Multiplying \eqref{geod2} by $k$ and subtracting the result from \eqref{geod1} it is easy to obtain $r_t\frac {Dk_t} {Dt} =r_tk_t u_t$, thus formally 
\begin{equation}
\label{geod3}
\frac D {Dt}(\log k_t)=u_t
\end{equation} 
as in the previous discrete setting of finitely many particles.

Now switch to the Lagrangian framework and let $x_t(X):=T_t(X)$ be the trajectory of a particle starting at $x_{0}(X)=X$. From \eqref{geod3} we see that
\begin{equation}
\label{geod5}
\frac D {Dt}\, x_t=\nabla u_t(x_t)
\quad \mbox{and}\quad
\frac D {Dt} (\log k_t(x_t))=u_t(x_t).
\end{equation} 
Thus, the infinitesimal energy along the trajectory of a single particle $x_t(X)$ is proportional to   

\begin{equation*}
E_X=\int_0^1 k_t \left[ \left|\frac D {Dt} (\log k_t)\right|^2 +\left|\frac D {Dt}\,x_t\right|^2\right] \,\rd t,
\end{equation*}
which is of course the continuous Lagrangian counterpart of \eqref{e:eip}.
For fixed $X$ the functions $x(t)=x_t(X)$ and $k(t)=k_t(x_t(X))$ depend only on time, $\frac D {Dt}$ becomes the ordinary time derivative, and the energy reads
\begin{equation}
\label{gg1}
E_X(x,k)=  \int_0^1\left( \frac {|k'|^2} {k}+k|x'|^2 \right)\,dt.
\end{equation}
The boundary conditions for $x,k$ are  naturally 
\begin{equation}
\label{bk1}
x_0=X,\ x_1=T_1(X),\qquad \ k_0=1,\ k_1=\frac{\rho_1(T_1(X))}{r_1(T_1(X))}.
\end{equation}

We temporarily forget about the origin of the functions $x,k$ and consider the minimization problem
\begin{equation}
\label{e:min1}
E_X(x,k)=\min E_X(\tilde x,\tilde k):\ \tilde x,\tilde k \mathrm{\, satisfy\,  \eqref{bk1}},\ \tilde k_t>0\ \mathrm{for}\ 0<t<1.
\end{equation}

\begin{defi} 
An admissible curve $(\rho_t,\u_t)_{t\in [0,1]}$ is called a \emph{trajectory geodesic} if the corresponding $x,k$ solve the minimization problem \eqref{e:min1} for every $X\in \R^d$.
\end{defi}

Given a trajectory geodesic and $X$, we can write the set of Euler-Lagrange equations for \eqref{e:min1}:
\begin{equation}
\label{el1}
\frac{2k''}{k}-\frac {|k'|^2}{k^2}=|x'|^2,
\end{equation} 
\begin{equation}
\label{el2}
(kx')'=0.
\end{equation}
From \eqref{el2} we see that  $kx'$ is conserved along the trajectories, that is, the trajectories are straight lines in the base space $\R^d$
\begin{equation}
\label{el4}
\left(\frac{x'}{|x'|}\right)'=0
\end{equation}
but the speed $|x'|$ is non-constant and is equal to $cst/k$ along the trajectories. Furthermore, \eqref{el1} can be rewritten in the form 
\begin{equation*}
\label{el3}
2(\log k)''+|(\log k)'|^2=|x'|^2.
\end{equation*}
Recalling \eqref{geod5} we find that  
$2\frac D {Dt}\, u_t+|u_t|^2=|\nabla u_t|^2$,
% \end{equation*}
or equivalently
\begin{equation}
\label{bge1}
\p_t u_t+\frac 1 2 (|u_t|^2+|\nabla u_t|^2)=0.
\end{equation}
Moreover, \eqref{geod5} and \eqref{el4} yield
$\frac D {Dt}\left(\frac{\nabla u_t}{|\nabla u_t|}\right)=0$,
i-e
\begin{equation}
\label{bge2}
\p_t\left(\frac{\nabla u_t}{|\nabla u_t|}\right)+ \nabla u_t \cdot\nabla \left(\frac{\nabla u_t}{|\nabla u_t|}\right)=0.
\end{equation}

In order to calculate Hessians as in \eqref{eq:Hessians_geodesics}, we need a procedure to construct a (trajectory) geodesic starting from any measure $\rho$ with prescribed initial velocity $\zeta\in T_\rho\Mm$.
Let $\u=(u,\nabla u)$ be the corresponding initial potential (formally) determined by $\zeta=-\dive(\rho\nabla u)+\rho u$, and consider the solution $(\rho_t,\u_t)_{t\in [0,\delta]}$ to the following problem: 

\begin{equation} \label{e:geodeq}
\left\{ 
\begin{array}{l} 
\p_t\rho_t=-\dive(\rho_t\nabla u_t)+\rho_tu_t, \\ \p_t u_t=-\frac 1 2 (|u_t|^2+|\nabla u_t|^2),\\
\rho_0=\rho,\ u_0=u.
\end{array}
\right.
\end{equation}

A crucial and somewhat surprising observation is that $u_t$ satisfies \eqref{bge2}: the reason is that \eqref{bge1} always implies \eqref{bge2}. Indeed, we first compute 
 \begin{equation*}
 \frac D {Dt}(\nabla u_t)=-\frac 1 2 \nabla (|u_t|^2+|\nabla u_t|^2)+\nabla^2 u_t\cdot \nabla u_t=-u_t\nabla u_t,\end{equation*}
thus
\begin{multline*}
\frac D {Dt}\left(\frac{\nabla u_t}{|\nabla u_t|}\right)=\frac {\frac D {Dt}(\nabla u_t)}{|\nabla u_t|}-\nabla u_t \frac {(\frac D {Dt} \nabla u_t)\cdot \nabla u_t}{|\nabla u_t|^3}\\
=-\frac {u_t\nabla u_t}{|\nabla u_t|}+\nabla u_t \frac {u_t\nabla u_t\cdot \nabla u_t}{|\nabla u_t|^3} =0.
\end{multline*}

 Repeating and rearranging the argument above, we deduce that for each $X$ the corresponding pair $x,k$ solves the Euler-Lagrange equations \eqref{el1},\eqref{el2}. Observe from \eqref{gg1} that the infinitesimal energy $E_X(x,k)$ is convex in $x,k,k'$: thus at least formally any solution of the Euler-Lagrange equations must be the unique minimizer, which by definition means that $(\rho_t,\u_t)$ is a trajectory geodesic. 

We have just observed that an admissible curve $(\rho_t,\u_t)$ is a trajectory geodesic if and only if it satisfies both the Hamilton-Jacobi equation \eqref{bge1} and the non-conservative continuity equation.
It sounds plausible that a trajectory geodesic should (at least formally) always be a geodesic (since it simultaneously minimizes the energy along every trajectory), but this claim is not necessarily true, see Section \ref{ss3}. The underlying obstacles are subtle and fundamental, as similar factors undermine the Riemannian structure of the Wasserstein space $(\mathcal{P}_2,\mathcal{W}_2)$, see Remark \ref{tgd1}. However, a weaker statement holds true: the trajectory geodesics have constant metric speed in the sense that
\begin{equation*}
\left\|\frac {d \rho_t}{dt}\right\|_{T_{\rho_t}\Mm}=\|\u_t\|_{H^1(\rd\rho_t)}=cst.
\end{equation*}
Indeed, using \eqref{geod1}, \eqref{bge1} and \eqref{usef} we find at least formally for any trajectory geodesic
\begin{multline*}
\frac {d}{dt}\|\u_t\|^2_{H^1(\rd\rho_t)}=2\left\langle \u_t, \frac {d \u_t}{dt} \right\rangle_{H^1(\rd\rho_t)}+\int_{\R^d}(|\nabla u_t|^2+|u_t|^2)\partial_t \rho_t\, \rd x\\
=2\int_{\R^d}\partial_t u_t\partial_t \rho_t\, \rd x+\int_{\R^d}(|\nabla u_t|^2+|u_t|^2)\partial_t \rho_t\, \rd x=0.
\end{multline*}
 
\begin{rmk} \label{tgd1}
A straightforward computation shows that the geodesic equations \eqref{e:geodeq} imply
\begin{equation*} \label{e:geodeqv}
\left\{ 
\begin{array}{l} 
\p_t\rho_t=-\dive(\rho_t\nabla u_t)+\rho_tu_t, \\ \p_t (\rho_t \nabla u_t)=-\dive (\rho_t\nabla u_t \otimes \nabla u_t),\\
\rho_0=\rho,\ u_0=u.
\end{array}
\right.
\end{equation*}
 This problem is very similar to the geodesics equations for optimal time-dependent transport \cite{villani03topics} of probability measures in the Wasserstein framework
\begin{equation}
\label{e:geodeqvw}
\left\{ 
\begin{array}{l} 
\p_t\rho_t=-\dive(\rho_t\nabla u_t), \\ \p_t (\rho_t \nabla u_t)=-\dive (\rho_t\nabla u_t \otimes \nabla u_t),\\
\rho_0=\rho,\ u_0=u,
\end{array}
\right.
\end{equation}
which is a particular case of the sticky particles system \cite{BG98}. 
A trajectory geodesic formalism similar to the above one can of course be developed in the Wasserstein space $(\mathcal{P}_2,\mathcal{W}_2)$ (with $k\equiv 1$, no charge is considered).
The Wasserstein geodesics are trajectory geodesics in that setting. The reason is that the trajectories of the dynamical optimal transport problem have constant velocity \cite{villani03topics}, thus the respective Euler-Lagrange equality $x''=0$ is satisfied. The converse statement is not always true. Note that since the exponential map $\rho_t=\operatorname{exp}_{\rho_0}(t\zeta)$ for the Wasserstein geodesics cannot be properly defined for all velocities, the classical Otto calculus \cite{villani03topics,villani08oldnew,otto01} uses variants of the system \eqref{e:geodeqvw} for the calculation of the Hessian. Hence, from the perspective of our formalism, the Otto calculus implicitly employs trajectory geodesics (without defining them) in place of metric geodesics.  \end{rmk}

%%%%%%%%%%%%%%%%%%%%%%%%%%%%%%%%%%%%%%%%%%%%%%%%%%%%%%%%%%
\subsection{Hessians and $\lambda$-convexity}
The above discussion suggests the following 
\begin{defi}
The Hessian of a functional $\mathcal{F}:\Mm\to\R$ with respect to our structure is formally defined by
\begin{equation}
\left<\operatorname{Hess}_d\mathcal{F}(\rho)\cdot\zeta,\zeta\right>_\rho=\frac{d^2}{dt^2}\mathcal{F}(\rho_t).
\end{equation}
Here the path $\rho_t$ is determined by \eqref{e:geodeq} where the initial potential $\u=(u,\nabla u)$ is related to the tangent vector $\zeta$ via the elliptic equation $-\dive(\rho\nabla u)+\rho u=\zeta$.
\end{defi}
Then we can give
\begin{defi}
\label{definition:lambda_convexity}
A functional $\mathcal{F}$ on $\Mm$ is convex (resp. $\lambda$-convex, $\lambda\in \R$) with respect to our structure provided
$
\left<\operatorname{Hess}_d\mathcal{F}(\rho)\cdot\zeta,\zeta\right>_\rho\geq 0
$ (resp.,
$
\left<\operatorname{Hess}_d\mathcal{F}(\rho)\cdot\zeta,\zeta\right>_\rho\geq \lambda \left<\zeta,\zeta\right>_\rho=\lambda \|u\|^2_{H^1(\rd\rho)}\text{)}
$ for all $\rho,\zeta$.
\end{defi}

Calculations of the Hessians are rather tedious, and as an example we only present here the final result for the \emph{internal energy} \cite{villani03topics} determined for absolutely continuous measures by
$$
\mathcal{E}(\rho)=\int_{\R^d} E(\rho(x))\mathrm{d}x,
$$
where $E:\R^+\to \R$ is a given measurable energy density.
For this functional one can compute explicitly
\begin{multline*} \left<\operatorname{Hess}_d\mathcal{E}(\rho)\cdot\zeta,\zeta\right>_\rho=\int_{\R^d}\Big\{P(\rho)\Gamma_2(u)+P_2(\rho)|\Delta u|^2-(2P_2(\rho)+P(\rho))u\Delta u\\ +\left(Q_2(\rho)-\frac 12 {Q(\rho)}  -P_2(\rho)\right)|\nabla u|^2+\left(Q_2(\rho)-\frac 12 {Q(\rho)}\right)|u|^2\Big\}.\end{multline*}
Here we have used the notation 
\begin{gather*}
P(\rho)= E'(\rho)\rho-E(\rho),  \qquad   P_2(\rho)= P'(\rho)\rho-P(\rho),\\
Q(\rho)=E'(\rho)\rho, \qquad  Q_2(\rho)=Q'(\rho)\rho,\\ 
\Gamma_2(u)=\sum\limits_{i,j=1}^d|\p^2_{x_ix_j}u|^2.
\end{gather*}
%
%
%%%%%%%%%%%%%%%%%%%%%%%%%%%%%%%%%%%%%%%%%%%%%%%%%%%%%%%%%%%%%%%%%
\subsection{Distance and geodesics between two one-point measures}
\label{ss3}

We conclude this section with a highly illustrative example, which in particular will show that geodesics may be non-unique and that \emph{trajectory} geodesics are not necessarily \emph{metric} geodesics.
Namely, we are going to describe the geodesics and calculate the distance between two one-point measures $\rho_0=k_0\delta_{x_0}$ and $\rho_1=k_1\delta_{x_1}$ with $x_0,x_1\in\R^d$, $k_0,k_1> 0$, and $\xi:=|x_0-x_1|>0$. 
Observe that the case when one of the masses vanishes fits into Proposition \ref{prop:geodesic_rho_to_0}, and the case when $\xi=0$ fits into Proposition \ref{prop:scaling_lambda}. 

In order to send $\rho_0$ to $\rho_1$, a first natural strategy is to treat the dynamics as the movement of a sole indivisible particle $\rho_t=k(t)\delta_{x(t)}$ moving from $x_0$ to $x_1$, as described in Section \ref{tdg}. The corresponding energy consumption for this \emph{transport} strategy is 
\begin{multline}
\label{e:min02}
E_{tr}(k_0,k_1,\xi)=\min\Bigg\{ \int_0^1 \tilde k_t \left[ \left|\frac d {dt} (\log \tilde k_t)\right|^2 +\left|\frac d {dt}\,\tilde x_t\right|^2\right] \,\rd t:\\
\tilde x_0=x_0,\ \tilde x_1=x_1, \quad \tilde k_0=k_0, \tilde k_1=k_1,\quad \tilde k_t>0\ \mathrm{for}\ 0<t<1\Bigg\},
\end{multline}
cf. \eqref{e:min01}. A laborious but rather elementary calculus of the variations shows that this minimization problem has a solution for $$\xi<2\pi,$$ which has the form
\begin{gather}
\label{kt}k_t=a(t-b)^2+c,\\
\label{e:xt}x_t=x_0+2\frac {x_1-x_0}{\xi}\left[\arctan((t-b)\sqrt{a/c})+\arctan(b\sqrt{a/c})\right],\\
\label{e:a0}E_{tr}(k_0,k_1,\xi)=4a,
\end{gather}
where
\begin{gather}
\label{e:a1}
a=k_0+k_1-2\cos(\xi/2)\sqrt{k_0k_1},\\
b=\frac{k_0-\cos(\xi/2)\sqrt{k_0k_1}}{k_0+k_1-2\cos(\xi/2)\sqrt{k_0k_1}},\\
c=\frac{k_0k_1\sin^2(\xi/2)}{k_0+k_1-2\cos(\xi/2)\sqrt{k_0k_1}}\label{e:ccc}.
\end{gather}
Observe that the solution \eqref{e:xt} ceases to exist whenever $\xi=|x_1-x_0|\geq 2\pi$.

Another natural strategy, made possible by the reaction term $\rho_tu_t$ in the non-conservative continuity equation, is to kill all the mass at $x_0$ while simultaneously creating mass at $x_1$. This means that we treat the dynamics as the evolution of two distinct particles $\rho_t=k_t(1)\delta_{x_1}+k_t(2)\delta_{x_2}$ with fixed positions but non-constant charges, with
\begin{gather*}
\label{twop}
k_0(1)=k_0, k_1(1)=0
\quad\mbox{and}\quad
k_0(2)=0, k_1(2)=k_1.
\end{gather*}

The corresponding energy intake for this \emph{stationary} strategy can be calculated using  Proposition \ref{prop:geodesic_rho_to_0}, viz.
\begin{equation}
\label{e:a2}
E_{st}=4(k_0+k_1).
\end{equation}
Note that this strategy also works for $\xi\geq 2\pi$, contrarily to the previous transport one. 

We now make the ansatz that a geodesic joining two one-point measures $\rho_0=k_0\delta_{x_0},\rho_1=k_1\delta_{x_1}$ should use a mixture of these transport and/or stationary strategies, corresponding to an evolution of (at most) three particles: the first one travels between $x_0$ and $x_1$; the second one stays in $x_0$ and its charge goes to zero as time evolves; the third is always at location $x_1$, and builds-up its charge starting form zero at $t=0$ (this geometry may require a loss of smoothness of the driving potential $u$, cf. Remark \ref{r:npartt}). That is,
\begin{gather*}
\rho_t=\sum\limits_{i=1}^3 k_t(i)r_t(i),\qquad
r_t(i)=\delta_{x_t(i)}, \\
x_0(1)=x_0,x_1(1)=x_1,\quad x_0(2)=x_1(2)=x_0,\quad x_0(3)=x_1(3)=x_1,\\
k_0(1)+k_0(2)=k_0,\quad k_1(1)+k_1(3)=k_1,\quad k_1(2)=0, k_0(3)=0.
\end{gather*}
The heuristics behind this ansatz is that indivisible infinitesimal particles which have non-zero charge at the initial and final times are considered to be in the first bulk, the ones which have zero charge at time $t=1$ constitute the second bulk, and the remaining ones go to the third bulk. 
Denoting the independent parameters $k_0(1)$ and $k_1(1)$ by $\gamma_0$ and $\gamma_1$, resp.,  we discover that the total energy intake is
\begin{multline*}
E(\gamma_0,\gamma_1)=E_1+E_2+E_3\\
=4[\gamma_0+\gamma_1-2\cos(\xi/2)\sqrt{\gamma_0\gamma_1}]+4(k_0-\gamma_0)+4(k_1-\gamma_1)\\
=4[k_0+k_1-2\cos(\xi/2)\sqrt{\gamma_0\gamma_1}]
\end{multline*}
(here we implicitly restrict to $\xi<2\pi$ so that both strategies are feasible).
For $\xi<\pi$ one can check that the minimum of $E(\gamma_0,\gamma_1)$ is achieved for $\gamma_0=k_0,\gamma_1=k_1$, i-e the pure transport strategy is optimal. For $\xi>\pi$, the energy minimum is achieved at $\gamma_0=\gamma_1=0$, i-e the geodesic only uses the pure stationary strategy.
At the critical value $\xi=\pi$, any combination of the two strategies (i-e any choice of the free parameters $\gamma_0\in[0,k_0]$ and $\gamma_1\in [0,k_1]$) does an optimal job, and in particular we see that the geodesics are definitely not unique.
Furthermore, for $\pi<\xi<2\pi$, the pure transport curve determined by \eqref{kt}--\eqref{e:ccc} is a trajectory geodesic by construction, but not a metric geodesic (since the stationary strategy is less expensive in terms of energy).

Let us now consider the particular case of probability measures $k_0=k_1=1$ and compare $d^2(\delta_{x_0},\delta_{x_1})$ with the Wasserstein distance $\mathcal{W}_2^2(\delta_{x_0},\delta_{x_1})$, which is clearly equal to $\xi^2=|x_1-x_0|^2$ in this example.
Remember that by Proposition \ref{prop:d<W2} our distance does not  a priori  exceed the Wasserstein one.
For $\xi\geq \pi$ this is trivial since $\mathcal{W}_2^2=\xi^2\geq \pi^2>8=d^2$, where $d$ is accordingly computed using the \emph{stationary} cost \eqref{e:a2} because $\zeta\geq \pi$.
On the other hand for small $\xi< \pi$ the optimal strategy is the \emph{transport} one, and from \eqref{e:a0}-\eqref{e:a1} we get $\mathcal{W}_2^2-d^2=\xi^2-(8-8\cos(\xi/2))=o(\xi^3)$.
This means that our distance and the Wasserstein one agree at least at order two for short ranges, i-e
$$
\mathcal{W}_2(\delta_{x_0},\delta_{x_1})-d(\delta_{x_0},\delta_{x_1})=o(|x_1-x_0|^2)
$$
(both $\mathcal{W}_2$ and $d$ being of order $\xi=|x_1-x_0|$). 

\begin{rmk}
This threshold effect (at $\xi=\pi$) is natural since the cost of the stationary strategy \eqref{e:a2} is independent of the distance between the supports, while the pure transport strategy\eqref{kt}--\eqref{e:ccc} really does depend on how far mass is transported. 
This might be relevant in the context of image processing: our distance non-linearly interpolates between ``Wasserstein-like/transport'' at close range, and basically total variation for large distances.
A similar effect was discovered in \cite{PR14generalized} for the generalized Wasserstein distance considered therein. The bounded-Lipschitz distance also has such property.
\end{rmk}

%%%%%%%%%%%%%%%%%%%%%%%%%%%%%%%%%%%%%%%%%%%%%%%%%%%%%%%%%%%%%%%%%%%%%%%%%%%%%%%%%%%%%%%%%%%%%%%%%%%%%%%5
\section{Application to a model of population dynamics}
\label{section:animals}

As an application of the abstract ideas from the previous sections we consider a problem in spatial population dynamics.  As originally proposed in \cite{mc90,cosner05}, the model describes a population of organisms inhabiting a domain $\Omega\subset \R^d$, whose macroscopic distribution $\rho(t,x)$ evolves according to reproduction and migration.  At each point $x\in\Omega$ lies a prescribed quantity of resources $m(x)$, and the local per capita reproduction rate is assumed to be proportional to the \emph{fitness} $m(x)-\rho$. In the absence of displacement of individuals the local population dynamics is thus given by the heterogeneous logistic equation $\frac{d\rho}{dt}=(m(x)-\rho)\rho$. The spatial migration is then dictated by the desire of individuals to maximize their local fitness in order to reproduce as best as possible, corresponding to an advection with local velocity $v=\nabla (m-\rho)$. Assuming that the animals cannot exit through walls and thus imposing a no-flux boundary condition, one 
obtains the following \emph{fitness-driven} model:
\begin{equation}
\label{eq:z}
\left\{
\begin{array}{ll}
\displaystyle
\p_t\rho = \operatorname{div} (\rho \nabla(\rho - m)) + \rho (m - \rho), & x \in \Omega, t 
> 0,
\\
\rho
\frac{
\partial \rho
}{
\partial \nu
}
=
\rho
\frac{
\partial m
}{
\partial \nu
}
,
&
x \in \partial \Omega, \ t > 0,
\\
\rho(0, x) = \rho_0(x), & x \in \Omega
.
\end{array}
\right.
\end{equation}
Here $\Omega \subset \mathbb R^d$ is a bounded, connected, open domain with smooth boundary,
$\nu$ is the outer normal to $\p \Omega$, $m(x)\geq 0$ is a given function (assumed to be at least $\mathcal{C}^{2,\alpha}(\o\Omega)$), and $\rho(t,x)$ is the unknown density of the population.  We consider nonnegative solutions of \eqref{eq:z}, so the initial datum $\rho_0(x)\in \mathcal{C}(\o \Omega)$ is 
supposed to be nonnegative as well.\\

In the previous sections we constructed our distance $d$ between measures on the whole space $\R^d$ by means of non-conservative continuity equations $\p _t\rho_t+\dive(\rho_t\nabla u_t)=\rho_t u_t$, which allowed to consider mass changes driven by the reaction term $\rho_t u_t$. Working in bounded domains $\Omega$ and imposing a no-flux condition on the velocity field $\nabla u_t$, so that the mass can \emph{only} change through the reaction term, the exact same construction formally gives a distance $d$ between measures in $\Omega$ (here we ignore all delicate issues related to boundary conditions and remain formal).
As a result all the Riemannian formalism in Section~\ref{section:Riemannian_structure} remains valid in the case of bounded domains, in particular formula \eqref{eq:formula_gradient_d} allows to compute gradients of functionals $\mathcal{F}:\mathcal{M}^+_b(\Omega)\to \R$. Anticipating that \eqref{eq:z} enjoys a comparison principle and that solutions  to the Cauchy problem remain nonnegative, 
we rather 
consider $\rho(t,x)$ as curves $t\mapsto \rho_t\in \mathcal{M}^+_b(\Omega)$. Using \eqref{eq:formula_gradient_d} it is easy to check, again formally, that \eqref{eq:z} can be written as the gradient flow
\begin{equation}
\label{eq:grad_flow_animals} 
 \frac{d\rho}{dt}=-\grad_d\mathcal{E}(\rho),\qquad \mathcal{E}(\rho)=\frac 12\int_{\Omega}|\rho(x)-m(x)|^2\rd x
\end{equation}
with respect to the distance $d$ (just write $\frac{\delta\mathcal{E}}{\delta\rho}=\rho-m$). In the sequel we will often refer to $\mathcal{E}$ as the \emph{entropy} functional, as is common for gradient flows in metric spaces.
In the view of \eqref{eq:grad_flow_animals} it is clear that the system is energetically driven by the fitness $m-\rho$, and it is natural to expect long-time convergence to the (unique) least-entropy stationary solution $\rho(t)\underset{t\to\infty}{\to} m$ (in some sense to be precised shortly). In fact this was proved in \cite{CW13} by means of comparison principles, but without convergence rate. Our main result in this section states that convergence is exponential:
\begin{theo}
\label{theo:animal_exponential_CV}
Assume that $m(x)>0$ in $\o\Omega$. Then for any nonnegative initial datum $\rho_0\in \mathcal{C}(\o\Omega)$, $\rho_0\not\equiv 0$, there is $\gamma\equiv \gamma(\rho_0,\Omega,m)>0$ such that the weak solution of \eqref{eq:z} satisfies
$$
\forall t\geq 0:\qquad \|\rho(t)-m\|_{L^2(\Omega)}\leq e^{-\gamma t}\|\rho_0-m\|_{L^2(\Omega)}.
$$
\end{theo}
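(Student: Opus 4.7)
My plan is to exploit the gradient-flow identification \eqref{eq:grad_flow_animals} and push it into a quantitative entropy--entropy production inequality \emph{\`a la} Bakry--\'Emery. Formally testing \eqref{eq:z} against $\frac{\delta\mathcal E}{\delta\rho}=\rho-m$ and integrating by parts using the no-flux condition $\rho\,\p_\nu(\rho-m)=0$ gives the dissipation identity
\begin{equation*}
\frac{d}{dt}\mathcal E(\rho_t)\;=\;-D(\rho_t),\qquad D(\rho):=\int_\Omega \rho\bigl(|\nabla(\rho-m)|^2+|\rho-m|^2\bigr)\,dx.
\end{equation*}
The first step would be to make this identity rigorous on weak solutions of \eqref{eq:z} by a standard mollification/density argument, i.e. as the usual energy identity for the Cauchy problem.

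The second and key step is the functional inequality $D(\rho)\ge 2\rho_*\mathcal E(\rho)$, which is immediate once a uniform positive lower bound $\rho(t,x)\ge\rho_*>0$ is available: one just throws away $|\nabla(\rho-m)|^2$ and keeps $D(\rho)\ge\rho_*\int_\Omega|\rho-m|^2\,dx=2\rho_*\mathcal E(\rho)$. Granting the lower bound, Gr\"onwall applied to $\frac{d}{dt}\mathcal E(\rho_t)\le -2\rho_*\mathcal E(\rho_t)$ yields $\mathcal E(\rho_t)\le e^{-2\rho_* t}\mathcal E(\rho_0)$, which squaring out gives $\|\rho_t-m\|_{L^2}\le e^{-\rho_* t}\|\rho_0-m\|_{L^2}$, i.e. the claim with $\gamma=\rho_*$.

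The main obstacle is therefore obtaining the uniform lower bound $\rho_*>0$, which I would get by combining three ingredients. First, since $\rho_0\not\equiv 0$ is continuous and nonnegative, the strong parabolic minimum principle applied to \eqref{eq:z} (non-degenerate wherever $\rho>0$) produces some $\tau>0$ with $\rho(\tau,\cdot)\ge c_1>0$ on $\overline\Omega$. Second, the long-time convergence $\rho(t)\to m$ in $C(\overline\Omega)$ established in \cite{CW13}, together with $m\ge m_*:=\min_{\overline\Omega}m>0$, gives a time $T_0\ge\tau$ such that $\rho(t,x)\ge m_*/2$ for all $t\ge T_0$. Third, on the compact interval $[\tau,T_0]$ continuity and the strong minimum principle yield a further lower bound $c_2>0$. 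Taking $\rho_*:=\min\{c_1,c_2,m_*/2\}$ gives the uniform bound on $[\tau,\infty)\times\overline\Omega$, while on $[0,\tau]$ one only invokes the monotonicity $\mathcal E(\rho_t)\le\mathcal E(\rho_0)$; the exponential estimate on all of $[0,\infty)$ is recovered by absorbing the short initial interval into the constant $\gamma$. Since $\tau,T_0,c_1,c_2$ all depend on $\rho_0,\Omega,m$, so does $\gamma$, matching the statement.

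In short, the formal Otto-type calculus reduces the theorem to a one-line Gr\"onwall estimate, and the only genuine analytic work lies in the positive lower bound, where one must exploit the degeneracy of the equation at $\rho=0$ together with the qualitative convergence of \cite{CW13}. I expect this to be the delicate point; the dissipation identity and the entropy--entropy production inequality itself are essentially routine once $\rho_*$ is in hand.
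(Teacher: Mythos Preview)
Your overall strategy---rigorous dissipation identity, an entropy--entropy production inequality, then Gr\"onwall---is exactly what the paper does. The divergence is in how you obtain the functional inequality $D(\rho)\ge 2\gamma\,\mathcal E(\rho)$, and this is where your argument has a real gap.

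You want a \emph{pointwise} lower bound $\rho(t,x)\ge\rho_*>0$, which would trivialize the inequality. But step~(a) is not justified: the principal part of \eqref{eq:z} is $\tfrac12\Delta\rho^2$, a porous-medium operator, and the lower-order advection and reaction terms both vanish where $\rho=0$. Such equations have finite speed of propagation of the support, so if $\rho_0$ vanishes on an open set there is in general \emph{no} small $\tau>0$ with $\rho(\tau,\cdot)\ge c_1>0$ on all of $\overline\Omega$. The strong minimum principle you invoke applies only on the positivity set and says nothing about the complement. You could instead take $\tau$ to be the (possibly large) first time at which the support has filled $\Omega$, relying on the qualitative convergence of \cite{CW13} for its existence, but then your handling of $[0,\tau]$ fails: monotonicity $\mathcal E(\rho_t)\le\mathcal E(\rho_0)$ alone gives $\|\rho(t)-m\|_{L^2}\le\|\rho_0-m\|_{L^2}$, which is \emph{not} $\le e^{-\gamma t}\|\rho_0-m\|_{L^2}$ for any $\gamma>0$ and $t>0$. ``Absorbing into $\gamma$'' would produce a prefactor $C=e^{\gamma\tau}>1$, whereas the theorem as stated demands the sharp constant~$1$.

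The paper avoids both issues by asking only for an $L^1$ lower bound $\int_\Omega\rho(t)\ge c_0>0$, which it gets \emph{from $t=0$} via comparison with the solution starting from $\underline\rho_0=\min\{\rho_0,m\}$ (this subsolution stays between $0$ and $m$, so its mass is nondecreasing). The entropy--entropy production inequality is then a genuinely new functional inequality, a generalized Beckner inequality (Theorem~\ref{th:eepi} in the Appendix), asserting
\[
\Phi\!\left(\int_\Omega\rho\right)\int_\Omega|\rho-m|^2\;\le\;\int_\Omega\rho\,|\rho-m|^2+\int_\Omega\rho\,|\nabla(\rho-m)|^2
\]
for \emph{all} nonnegative $\rho\in H^1\cap L^\infty$, with no pointwise positivity needed. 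This gives $\mathcal E'(t)\le-2\Phi(c_0)\mathcal E(t)$ for every $t\ge0$, hence the estimate with constant~$1$ directly. Your route, even if patched, would also be somewhat circular in flavor: it bootstraps the quantitative rate from the qualitative convergence of \cite{CW13}, whereas the paper's argument is self-contained modulo the comparison principle.
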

\noindent The precise dependence of the rate $\gamma$ on the initial datum $\rho_0$ will be derived along the proof, see Remark~\ref{rmk:rate_depends_initial_datum} below.

Denoting the \emph{entropy dissipation} along trajectories by
$$
\mathcal{D}(\rho):=-\frac{d}{dt}\mathcal{E}(\rho)=\|\grad_d\mathcal{E}(\rho)\|^2,
$$
a classical way to retrieve exponential convergence $\rho(t)\to \rho_*$ for gradient flows is to obtain an \emph{entropy-entropy production} inequality
\begin{equation}
\label{eq:entropy_entropy_production}
\mathcal{D}(\rho)\geq \lambda(\mathcal{E}(\rho)-\mathcal{E}(\rho_*))
\end{equation}
for some $\lambda>0$.
This implies trend to equilibrium in the entropy sense $\mathcal{E}(\rho(t))\searrow \mathcal{E}(\rho_*)$ with exponential rate $\lambda$, and then using a Csicz\'ar-Kullback inequality one usually deduces that $\rho(t)\to\rho_*$ in some sense (typically $\|\rho(t)-\rho_*\|_{L^1(\Omega)}\to 0$).
We refer to \cite[Chapter 9]{villani03topics} for introductory material on this topic.

Note that the least entropy stationary solution is here $\rho_*=m(x)$ and that $\mathcal{E}(\rho_*)=0$. Formally multiplying \eqref{eq:z} by $\rho-m$ and integrating, the dissipation can be formally computed here as
$$
\mathcal{D}(\rho)=\|\rho-m\|^2_{H^1(\rd\rho)}=\int_\Omega (|\nabla(\rho-m)|^2+|\rho-m|^2)\rho\,\rd x,
$$
which is of course consistent with $\mathcal D=\|\grad_d \mathcal E\|^2$ and the Riemannian formalism $|\zeta|_{T_\rho\Mm}=\|\u\|_{H^1(\rd\rho)}$ from Section~\ref{section:Riemannian_structure}.
Observe that, due to the very definition of $\mathcal{E}(\rho)=\frac 12 \|\rho-m\|^2_{L^2(\Omega)}$, convergence in the entropy sense $\mathcal{E}(\rho(t))\to \mathcal E(m)=0$ already implies convergence $\rho(t)\to m$ in the stronger $L^2(\Omega)$ sense. As a consequence we can dispense from Csicz\'ar-Kullback inequalities and we only need an entropy-entropy production as in \eqref{eq:entropy_entropy_production}.
The latter classically holds (uniformly in the initial data) as soon as the entropy is $\lambda$-convex for some $\lambda>0$ (in the sense of Definition~\ref{definition:lambda_convexity}, see Section~\ref{section:Riemannian_structure} for second-order calculus). This can be tested here at least formally using the geodesic equations \eqref{e:geodeq}.
Unfortunately, for generic $m$ the specific entropy $\mathcal{E}(\rho)=\int_\Omega|\rho-m|^2$ does not appear to be $\lambda$-convex for any $\lambda>0$.
We use instead a generalized Beckner inequality (Theorem~\ref{th:eepi}), and as a consequence we only obtain exponential convergence for rates depending on the initial data as in Theorem~\ref{theo:animal_exponential_CV}.\\

From now on we claim mathematical rigor.
Following \cite{CW13} weak solutions are defined as
\begin{defi}
A nonnegative function
\begin{equation}
\label{eq:z-wsclass}
\rho \in
C([0,+\infty); L^1(\Omega))
\cap
L^2_\text{\rm loc} ([0, + \infty); H^1(\Omega))
\cap
L^\infty_\text{\rm loc}([0,+\infty)\times \overline \Omega)
\end{equation}
is called a \emph{global weak solution} of the Cauchy problem~\eqref{eq:z} if it satisfies the identity
\begin{multline}
\label{eq:z-ws}
-
\int_0^T
\int_\Omega
\rho \p_t\varphi
\, \rd x \, \rd t
+
\int_\Omega
\rho(T,x) \varphi(T,x)
\, \rd x
-\int_\Omega \rho_0(x) \varphi(0,x)
\, \rd x
\\
=
-
\int_0^T
\int_\Omega
\rho \nabla (\rho - m) \cdot \nabla \varphi
\, \rd x \, \rd t
+
\int_0^T
\int_\Omega
\rho (m - \rho) \varphi
\, \rd x \, \rd t
\end{multline}
for any $T>0$ and $\varphi \in C^1( [0, T] \times \overline\Omega)$.
\end{defi}

The existence and uniqueness for weak solutions of problem~\eqref{eq:z} were established in \cite{CW13}, as well as a comparison principle. We start by rigorously deriving the dissipation (entropy production) \emph{equality}:
\begin{lem}
\label{lem:H1*_dissipation}
Any weak solution actually satisfies $\rho\in \mathcal{C}([0,\infty);L^2(\Omega))$. The dissipation
$$
\mathcal{D}(t):=\int_\Omega \left(|\nabla(\rho(t)-m)|^2+|\rho(t)-m|^2\right)\rho(t)\,\rd x
$$
belongs to $L^1_{\textrm{loc}}([0,\infty))$, and
$$
\frac{d}{dt}\mathcal{E}(t)=-\mathcal{D}(t)
$$
in the sense of distributions $\mathcal{D}'(0,\infty)$.
\end{lem}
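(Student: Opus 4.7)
The statement contains three assertions, and I would prove them in the following order.

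\emph{Step 1: Continuity in $L^2$.} From the regularity class \eqref{eq:z-wsclass}, the flux $\rho\nabla(\rho-m)$ belongs to $L^2_{\mathrm{loc}}([0,\infty); L^2(\Omega))$, since $\rho$ is locally bounded and $\nabla(\rho-m)$ is in $L^2_{\mathrm{loc}}(L^2)$ (recall $m\in\mathcal{C}^{2,\alpha}$). The reaction term $\rho(m-\rho)$ is locally bounded. Reading the weak formulation \eqref{eq:z-ws} as a distributional identity $\partial_t\rho=\operatorname{div}(\rho\nabla(\rho-m))+\rho(m-\rho)$, I would then conclude $\partial_t\rho\in L^2_{\mathrm{loc}}([0,\infty); H^1(\Omega)^*)$ (using the no-flux structure to test against $H^1$ rather than $H^1_0$). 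Combined with $\rho\in L^2_{\mathrm{loc}}([0,\infty); H^1(\Omega))$, the classical Lions--Magenes embedding for the Gelfand triple $H^1(\Omega)\hookrightarrow L^2(\Omega)\hookrightarrow H^1(\Omega)^*$ yields $\rho\in\mathcal{C}([0,\infty); L^2(\Omega))$ after redefinition on a null set (the existing $L^1$-continuity identifies the representative).

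\emph{Step 2: $L^1_{\mathrm{loc}}$ bound on the dissipation.} Since $\rho\in L^\infty_{\mathrm{loc}}([0,\infty)\times\overline\Omega)$, I would bound
\begin{equation*}
\mathcal{D}(t)\leq \|\rho(t)\|_{L^\infty(\Omega)}\int_\Omega\bigl(|\nabla(\rho-m)(t)|^2+|(\rho-m)(t)|^2\bigr)\,\rd x,
\end{equation*}
and observe that the right-hand side lies in $L^1_{\mathrm{loc}}([0,\infty))$ directly from \eqref{eq:z-wsclass} together with $m\in H^1(\Omega)\cap L^\infty(\Omega)$.

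\emph{Step 3: Chain rule.} The plan is to use $\rho-m$ as a test function in \eqref{eq:z-ws}, which formally yields
\begin{equation*}
\tfrac{\rd}{\rd t}\mathcal{E}(t)=-\int_\Omega\rho|\nabla(\rho-m)|^2\,\rd x-\int_\Omega\rho|\rho-m|^2\,\rd x=-\mathcal{D}(t).
\end{equation*}
Since \eqref{eq:z-ws} only admits $\mathcal{C}^1$ test functions, I would regularize $\rho$ by a Steklov average (or Friedrichs mollification) in time, producing $\rho^\eps\in\mathcal{C}^1_t(L^2_x)\cap L^2_t(H^1_x)$, use the allowed test function $(\rho^\eps-m)\psi(t)$ with $\psi\in\mathcal{C}^\infty_c(0,\infty)$, and let $\eps\to 0$. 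Equivalently, one can invoke the abstract chain rule for $\rho\in L^2_{\mathrm{loc}}(H^1)$ with $\partial_t\rho\in L^2_{\mathrm{loc}}(H^{1,*})$ established in Step~1, which gives $\tfrac{\rd}{\rd t}\tfrac{1}{2}\|\rho-m\|^2_{L^2}=\langle\partial_t\rho,\rho-m\rangle_{H^{1,*},H^1}$ in $\mathcal{D}'(0,\infty)$; substituting the PDE on the right-hand side delivers $-\mathcal{D}(t)$.

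\emph{Main obstacle.} The only genuinely delicate point is that the duality pairing in Step~3 must truly produce $\int_\Omega\rho\nabla(\rho-m)\cdot\nabla(\rho-m)\,\rd x$ and $\int_\Omega\rho(m-\rho)(\rho-m)\,\rd x$ without a boundary contribution—this is where the no-flux structure enters. I would handle it by verifying that the weak formulation \eqref{eq:z-ws} remains valid for all $\varphi\in L^2_{\mathrm{loc}}([0,\infty); H^1(\Omega))\cap L^\infty_{\mathrm{loc}}$ with $\partial_t\varphi\in L^2_{\mathrm{loc}}([0,\infty); L^2(\Omega))$ (via density of $\mathcal{C}^1$ functions in this larger class and Step~2's integrability), after which the mollified $\rho^\eps-m$ is admissible and the limit $\eps\to 0$ is routine by dominated convergence using $\rho\in L^\infty_{\mathrm{loc}}$ and the strong convergence in $L^2_{\mathrm{loc}}(H^1)$.
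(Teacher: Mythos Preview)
Your proposal is correct and follows essentially the same approach as the paper: establish $\partial_t\rho\in L^2_{\mathrm{loc}}((H^1)^*)$ from the weak formulation, invoke the Lions--Magenes/Gelfand-triple embedding for $L^2$-continuity, bound $\mathcal{D}$ in $L^1_{\mathrm{loc}}$ via the $L^\infty$ bound on $\rho$, and then use the abstract chain rule with $\rho-m$ as test function. The paper's proof is terser (it simply asserts that $\rho-m$ is a legitimate test function once $\partial_t\rho\in L^2_{\mathrm{loc}}((H^1)^*)$ is known), whereas you spell out the density/mollification justification more carefully, but the argument is the same.
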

\begin{proof}
Since $\rho$ is uniformly bounded and $\rho,m\in L^2_{\textrm{loc}}([0,T);H^1(\Omega))$, it is straightforward to check that $\mathcal{D}\in L^1_{\textrm{loc}}([0,\infty))$. By density of $\mathcal{C}^1(\o\Omega)$ in $H^1(\Omega)$ and testing $\phi(t,x)=\eta(t)\psi(x)$ with $\eta\in \mathcal{C}^\infty_c(0,T)$ in the definition of weak solutions it is easy to see that $\p_t \rho\in L^2_{\textrm{loc}}([0,\infty);(H^1(\Omega))^*)$. By embedding $H^1(\Omega)\subset\subset L^2(\Omega)\subset (H^1(\Omega))^*$ one classically obtains $\rho\in\mathcal{C}([0,T];L^2(\Omega))$ for all $T>0$.

Since we know now that $\partial_t(\rho-m)=\p_t\rho\in L^2_{\textrm{loc}}([0,\infty);(H^1(\Omega))^*)$ we can legitimately take $\rho-m\in L^2_{\textrm{loc}}([0,\infty);H^1(\Omega))$ as a test function in \eqref{eq:z}, from which it classically follows that
\begin{align*}
\frac{d}{dt}\mathcal{E}(\rho(t))
& =\frac{d}{dt}\left(\frac 12 \|\rho(t)-m\|^2_{L^2(\Omega)}\right) =\left<\p_t\rho,\rho-m\right>_{(H^1)^*,H^1}\\
 & = -\int_{\Omega}\left(|\nabla(\rho(t)-m)|^2+|\rho(t)-m|^2\right)\rho(t)  = -\mathcal{D}(t)
\end{align*}
for a.e. $t\in (0,\infty)$ and in $L^1_{\textrm{loc}}([0,\infty))$ as desired.
\end{proof}
\begin{proof}[Proof of Theorem~\ref{theo:animal_exponential_CV}]
In order to apply our generalized Beckner inequality and retrieve an entropy-entropy production inequality, we first need to bound the $L^1$ mass of $\rho(t)$ from below.
To this end we let $\underline{\rho}_0(x)=\min\{m(x),\rho_0(x)\}$, and define $\underline{\rho}(t,x)$ to be the unique solution of the Cauchy problem with initial datum $\underline{\rho}_0$. Applying the comparison principle \cite[Lemma 4.2]{CW13} we have that
$$ 
\underline{\rho}(t,x)\leq \rho(t,x)\qquad \mbox{for a.e. } t,x
$$
and it suffices to show that $\int_\Omega \underline\rho(t)\geq c_0>0$.
Because $0$ and $m(x)$ are stationary solutions of \eqref{eq:z} (thus respectively sub and supersolutions) the comparison principle ensures that $0\leq \underline\rho(t,x)\leq m(x)$. Testing $\phi\equiv 1$ in the weak formulation we get $\frac{d}{dt}\int_\Omega \underline\rho(t)=\int_\Omega \underline\rho(t)(m-\underline\rho(t))\geq 0$, whence
$$
\int_\Omega \rho(t)\geq \int_\Omega \underline\rho(t)\geq \int_\Omega \underline\rho_0=:c_0>0
$$
as desired.

Since we are considering $m(x)>0$ we can apply the generalized Beckner inequality, Theorem~\ref{th:eepi} in the appendix, to get
\begin{align*}
% \label{e:eepi}
\Phi\left(c_0\right)\int_\Omega |\rho(t)-m|^2
& \leq \Phi\left(\|\rho(t)\|_{L^1(\Omega)}\right)\cdot\int_\Omega |\rho(t)-m|^2\\
& \leq \int_{\Omega}\left(|\nabla(\rho(t)-m)|^2+|\rho(t)-m|^2\right)\rho(t).
\end{align*}
Wence by Lemma~\ref{lem:H1*_dissipation}
$$
\frac{d}{dt}\mathcal{E}(t)=-\mathcal{D}(t)\leq -2\gamma \mathcal{E}(t)\qquad \mbox{a.e. }t>0
$$
with $\gamma=\Phi(c_0)$. By standard Gr\"onwall arguments we conclude that
$$
\mathcal{E}(t)\leq e^{-2\gamma t}\mathcal{E}(0),
$$
which in turn yields $\|\rho(t)-m\|_{L^2(\Omega)}\leq e^{-\gamma t} \|\rho_0-m\|_{L^2(\Omega)}$
and achieves the proof.
\end{proof}
\begin{rmk}
\label{rmk:rate_depends_initial_datum}
From the proof above it is clear that the rate $\gamma>0$ in Theorem~\ref{theo:animal_exponential_CV} only depends on the initial datum through $c_0=\int_{\Omega}\min\{\rho_0,m\}\,\rd x$. 
\end{rmk}

%%%%%%%%%%%%%%%%%%%%%%%%%%%%%%%%%%%%%%%%%%%%%%%%%%%%%%%%%%%%%%%%%%%%%%%%%%%%%%%%%%%%%%
\section{Appendix}
\label{section:appendix}
%%%%%%%%%%%%%%%%%%%%%%%%%%%%%%%%%%%%%%%%%%%%%%%%
\subsection*{Lower-semicontinuity of the bounded-Lipschitz distance}
\begin{lem}\label{lem:d_BL_lsc_w*}
The bounded-Lipschitz distance $d_{BL}$ is sequentially lower semicontinuous with respect to the weak-$*$ topology. 
\end{lem}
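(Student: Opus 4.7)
The plan is to test $d_{BL}(\rho_0,\rho_1)$ against a single compactly supported smooth function at a time, pass to the limit along $\{\rho_0^k,\rho_1^k\}$ by weak-$*$ convergence, and finally take the supremum over the admissible test functions.

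More concretely, I would first invoke the observation made in the Notation section that
$$
d_{BL}(\mu,\nu)=\sup\left\{\left|\int_{\R^d}\phi\,(\rd\nu-\rd\mu)\right|:\ \phi\in\mathcal{C}^\infty_c(\R^d),\ \|\phi\|_{\operatorname{Lip}}\le 1\right\},
$$
which follows from the tightness of the pair $\{\mu,\nu\}\subset\Mm$ and a standard truncation/mollification argument. This reduction is the only nontrivial ingredient, and it is the main reason one can bridge the mismatch between weak-$*$ convergence (which only pairs against $\mathcal{C}_0$) and the bounded-Lipschitz duality (which a priori uses non-decaying Lipschitz test functions).

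With that reduction in hand, fix an arbitrary $\phi\in\mathcal{C}^\infty_c(\R^d)$ with $\|\phi\|_{\operatorname{Lip}}\le 1$. Since $\phi\in\mathcal{C}_0(\R^d)$, the weak-$*$ convergence $\rho_0^k\rightharpoonup\rho_0$ and $\rho_1^k\rightharpoonup\rho_1$ yields
$$
\int_{\R^d}\phi\,(\rd\rho_1-\rd\rho_0)=\lim_{k\to\infty}\int_{\R^d}\phi\,(\rd\rho_1^k-\rd\rho_0^k),
$$
and the right-hand side is majorized in modulus by $d_{BL}(\rho_0^k,\rho_1^k)$ for every $k$. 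Hence
$$
\left|\int_{\R^d}\phi\,(\rd\rho_1-\rd\rho_0)\right|\le\liminf_{k\to\infty}d_{BL}(\rho_0^k,\rho_1^k).
$$

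Finally, taking the supremum over all admissible $\phi\in\mathcal{C}^\infty_c(\R^d)$ with $\|\phi\|_{\operatorname{Lip}}\le 1$ and applying the reduction above to the limit pair $(\rho_0,\rho_1)$ gives
$$
d_{BL}(\rho_0,\rho_1)\le\liminf_{k\to\infty}d_{BL}(\rho_0^k,\rho_1^k),
$$
which is the desired sequential lower semicontinuity. There is no serious obstacle in this argument: essentially everything rests on the compactly supported reformulation of $d_{BL}$, after which the pointwise (in $\phi$) passage to the limit is immediate.
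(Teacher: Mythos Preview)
Your proof is correct and is exactly the argument the paper has in mind: the paper's own proof is the one-line remark that the supremum in $d_{BL}$ can be restricted to $\mathcal{C}^\infty_c$ test functions, after which lower semicontinuity under weak-$*$ convergence is immediate. You have simply spelled out the details of that one line.
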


The proof is obvious since the supremum in the definition of $d_{BL}$ can be restricted to smooth compactly supported functions, which are dense in $\mathcal C_0$.
\subsection*{A variant of the Banach-Alaoglu theorem}
\begin{prop}\label{Ban}
Let $(X,\|\cdot\|)$ be a separable normed vector space. Assume that there exists a sequence of seminorms $\{\|\cdot\|_k\}$ ($k=0,1,2,\dots$) on $X$ such that for every $x\in X$ one has 
$$
\|x\|_k\leq C \|x\|
$$
with a constant $C$ independent of $k,x$, and
$$
\|x\|_k\underset{k\to\infty}{\rightarrow} \|x\|_0.
$$
Let $\varphi_k$ ($k=1,2,\dots$) be a uniformly bounded sequence of linear continuous functionals on $(X,\|\cdot\|_k)$, resp., in the sense that\footnote{We recall that the continuous dual of a seminormed space is a Banach space, see \cite{bit}} 
$$
c_k:=\|\varphi_k\|_{(X,\|\cdot\|_k)^*}\leq C.
$$
Then the sequence $\{\varphi_k\}$ admits a converging subsequence $\varphi_{k_n}\to \varphi_0$ in the weak-$*$ topology of $X^*$, and
\begin{equation} \label{e:liminfp}
\|\varphi_0\|_{(X,\|\cdot\|_0)^*}\leq c_0:=\liminf\limits_k c_{k}.
\end{equation}
\end{prop}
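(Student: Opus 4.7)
The plan is to reduce everything to the classical sequential Banach--Alaoglu theorem in the strong norm $\|\cdot\|$, then pass to the limit in the natural pointwise bound $|\varphi_k(x)|\le c_k\|x\|_k$.

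First I would exploit the uniform compatibility between the norms. Since $\|x\|_k\le C\|x\|$ and $c_k\le C$, one has
\[
|\varphi_k(x)|\le c_k\|x\|_k\le C^2\|x\|\qquad\text{for all }x\in X,
\]
so $\{\varphi_k\}$ is a bounded sequence in the genuine topological dual $(X,\|\cdot\|)^*$. Using separability of $(X,\|\cdot\|)$, I would invoke the standard diagonal/Cantor argument: pick a countable dense subset $\{x_j\}\subset X$, extract successive subsequences along which $\varphi_{k}(x_j)$ converges for each $j$, diagonalize, and use the uniform bound $|\varphi_k(x)-\varphi_k(y)|\le C^2\|x-y\|$ to upgrade the convergence on $\{x_j\}$ to pointwise convergence on all of $X$. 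This furnishes a subsequence (not relabeled) and a linear functional $\varphi_0:X\to\R$ with $\varphi_k(x)\to\varphi_0(x)$ for every $x\in X$. Before extracting this subsequence I would first pass to a sub-subsequence realizing $\liminf$, i.e.\ along which $c_k\to c_0=\liminf_k c_k$, so that both passages to the limit can be performed simultaneously.

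Next I would pass to the limit in the inequality
\[
|\varphi_k(x)|\le c_k\|x\|_k.
\]
Since $c_k\to c_0$ by the first extraction, and $\|x\|_k\to\|x\|_0$ by hypothesis, the right-hand side converges to $c_0\|x\|_0$ for every fixed $x\in X$, and the left-hand side converges to $|\varphi_0(x)|$. Hence
\[
|\varphi_0(x)|\le c_0\|x\|_0\qquad\text{for all }x\in X,
\]
which is exactly the desired estimate \eqref{e:liminfp}; in particular $\varphi_0$ vanishes on the kernel $N_0$ of $\|\cdot\|_0$, so it descends to a continuous linear form of norm $\le c_0$ on $(X/N_0,\|\cdot\|_0)$ (and uniquely extends to its completion).

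I do not expect any serious obstacle. The only mildly delicate point is bookkeeping the two successive extractions (one to realize the $\liminf$, one for weak-$*$ compactness), so that when one finally writes $|\varphi_0(x)|=\lim|\varphi_{k_n}(x)|$ one genuinely has $c_{k_n}\to c_0$ rather than just $\liminf c_{k_n}\ge c_0$. Everything else is a routine application of separability plus the uniform $\|\cdot\|$-boundedness inherited from the hypotheses.
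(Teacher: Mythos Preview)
Your proposal is correct and follows essentially the same approach as the paper: bound $\{\varphi_k\}$ in $(X,\|\cdot\|)^*$ via $|\varphi_k(x)|\le c_k\|x\|_k\le C^2\|x\|$, apply sequential Banach--Alaoglu (the paper invokes it directly, you spell out the diagonal extraction), pre-extract so that $c_k\to c_0$, and pass to the limit in $|\varphi_k(x)|\le c_k\|x\|_k$. The only differences are cosmetic: you are more explicit about the order of the two extractions and about the quotient by $N_0$, which the paper leaves implicit.
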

\begin{proof} 
Since $$|\varphi_k(x)|\leq c_k \|x\|_k\leq C \|x\|$$ for every $x\in X$, $\{\varphi_k\}$ is a bounded sequence in $X^*$. Hence, by the Banach-Alaoglu theorem, it weakly-$*$ converges to some $\varphi_0\in X^*$ (up to a subsequence). Without loss of generality, $c_k\to c_0$, and passing to the limit in the inequality $|\varphi_k(x)|\leq c_k \|x\|_k$ we deduce \eqref{e:liminfp}.
\end{proof}
%%%%%%%%%%%%%%%%%%%%%%%%%%%%%%%%%%%
\subsection*{Arc-length reparametrization}
\begin{lem}
\label{lem:Lipschitz_time_arclength}
Let $\rho_0,\rho_1\in \Mm$ and $(\rho_t,\u_t)_{t\in [0,1]}$ be an admissible path connecting $\rho_0,\rho_1$ with finite energy $E[\rho;\u]$.
Then there exists an admissible time reparametrization $(\o\rho_t,\o \u_t)_{t\in[0,1]}$ with energy $E[\o\rho;\o \u]\leq E[\rho;\u]$ such that $\|\o \u\|^2_{H^1(\rd\o\rho_t)}=cst=E[\o \rho;\o \u]$ for a.e. $t\in [0,1]$.
\end{lem}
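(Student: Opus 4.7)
The plan is to carry out the standard arclength reparametrization from metric geometry, adapted to this Benamou-Brenier-type PDE framework. Set $v(t) := \|\mathfrak u_t\|_{H^1(\rd\rho_t)}$, which lies in $L^2(0,1)$ by the finite energy assumption, and let $L := \int_0^1 v(t)\,\rd t$. By Cauchy-Schwarz,
$$
L^2 = \left(\int_0^1 v(t)\,\rd t\right)^2 \leq \int_0^1 v(t)^2\,\rd t = E[\rho;\mathfrak u],
$$
with equality iff $v$ is constant, so the target energy $L^2$ is already at most the original and the whole game is to reparametrize time so that the new speed equals $L$ a.e. Dispose first of the trivial case $L=0$: then $v\equiv 0$ a.e., hence $\mathfrak u_t=0$ in $H^1(\rd\rho_t)$ a.e., so the non-conservative continuity equation forces $\p_t\rho_t=0$ in $\mathcal D'((0,1)\times\R^d)$ and $\rho_t\equiv\rho_0$; taking $\overline\rho\equiv\rho_0$, $\overline{\mathfrak u}\equiv 0$ does the job.

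Assume $L>0$. Define the nondecreasing absolutely continuous map
$$
s(t) := \frac{1}{L}\int_0^t v(\tau)\,\rd\tau, \qquad s(0)=0,\ s(1)=1,
$$
and its (left-continuous) generalized inverse $\sigma(\tau):=\inf\{t\in[0,1]:s(t)\geq \tau\}$. The curve and potential are then reparametrized by $\overline\rho_\tau := \rho_{\sigma(\tau)}$ together with $\overline u_\tau:=\sigma'(\tau)\,u_{\sigma(\tau)}$ and $\nabla\overline u_\tau:=\sigma'(\tau)\,\nabla u_{\sigma(\tau)}$ at the points where $\sigma$ is differentiable (set to zero on the exceptional null set). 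The crucial observation that makes all of this well-defined is: on any maximal interval $[a,b]$ where $s$ is constant we have $v=0$ a.e.\ on $[a,b]$, hence $\mathfrak u=0$ in $H^1(\rd\rho_t)$ and therefore the continuity equation gives $\p_t\rho=0$ in $\mathcal D'((a,b)\times\R^d)$, so $\rho_t\equiv\rho_a$ on $[a,b]$. Thus the jumps of the generalized inverse fall inside intervals where $\rho$ is already constant, and $\overline\rho_\tau$ is unambiguously defined and narrowly continuous on $[0,1]$.

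Once these definitions are in place, the verification is routine. Away from the Lebesgue-null set where $v(\sigma(\tau))=0$, one has $\sigma'(\tau)=1/s'(\sigma(\tau))=L/v(\sigma(\tau))$, so
$$
\|\overline{\mathfrak u}_\tau\|^2_{H^1(\rd\overline\rho_\tau)}=|\sigma'(\tau)|^2\,\|\mathfrak u_{\sigma(\tau)}\|^2_{H^1(\rd\rho_{\sigma(\tau)})}=\frac{L^2}{v(\sigma(\tau))^2}\,v(\sigma(\tau))^2=L^2\quad\text{a.e.},
$$
giving simultaneously $\|\overline{\mathfrak u}\|^2_{H^1(\rd\overline\rho_\tau)}=\mathrm{cst}$ and $E[\overline\rho;\overline{\mathfrak u}]=L^2\leq E[\rho;\mathfrak u]$. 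That $(\overline\rho,\overline{\mathfrak u})$ still solves the non-conservative continuity equation is obtained by a change of variable $t=\sigma(\tau)$ in the integrated weak formulation \eqref{eq:formulation_PDE_t0_t1} applied to the original pair: on the portions where $s$ is strictly increasing the substitution is the ordinary AC change of variables, while on the flat portions of $s$ both sides of the equation vanish since $\rho$ is stationary.

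The main obstacle is the handling of the flat parts of $s$, where $\sigma$ has jumps and is not itself absolutely continuous across $[0,1]$. If one wishes to avoid the direct argument above, a clean fallback is to regularize by $v_\varepsilon:=v+\varepsilon$, set $s_\varepsilon(t):=(L+\varepsilon)^{-1}\int_0^t(v+\varepsilon)$ which is strictly increasing with AC inverse $\sigma_\varepsilon$, reparametrize to obtain admissible $(\overline\rho^\varepsilon,\overline{\mathfrak u}^\varepsilon)$ with $E[\overline\rho^\varepsilon;\overline{\mathfrak u}^\varepsilon]=(L+\varepsilon)\int_0^1 v^2/(v+\varepsilon)\to L^2$ by dominated convergence, and extract a limit using the narrow compactness of masses (Corollary~\ref{corm1}) together with the lower semicontinuity of $d$ (Theorem~\ref{theo:d_LSC_weak*}); the limiting path has energy exactly $L^2$, and by the equality case of the Cauchy-Schwarz argument this forces the speed to be constant a.e.
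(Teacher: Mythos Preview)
Your main argument is correct and is essentially the same arclength reparametrization as the paper's: define the cumulative speed, take its generalized inverse, and pull back the curve and potential---the paper does this first on $[0,L]$ and then rescales to $[0,1]$, while you normalize by $L$ from the outset, and your explicit observation that $\rho$ is constant on the flat stretches of $s$ (so that $\overline\rho$ is unambiguously narrowly continuous across the jumps of $\sigma$) makes transparent a point the paper handles more tersely via countability of the jump set and a reference to \cite{AGS06}.

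One caveat on the optional fallback you sketch at the end: it is not complete as written. Lower semicontinuity of the energy under weak-$*$ limits only yields that the limiting path has energy $\leq L^2$, not $=L^2$, and without the equality you cannot invoke the equality case of Cauchy--Schwarz to force constant speed; nor do Corollary~\ref{corm1} and Theorem~\ref{theo:d_LSC_weak*} by themselves produce a limiting \emph{potential} with the right pointwise-in-time norm. Since your direct argument already does the job this is harmless, but the regularization route would need an additional ingredient (for instance, showing that $\sigma_\varepsilon\to\sigma$ and that the limiting curve is exactly $\rho_{\sigma(\cdot)}$, then arguing its metric derivative is $L$ a.e.) to stand on its own.
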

\begin{proof}
The argument is adapted from \cite[Lemma 1.1.4]{AGS06}.
Observing that $t\mapsto \|\u_t\|_{H^1(\rd\rho_t)}\in L^2(0,1)\subset L^1(0,1)$ we see that
$$
\mathtt{s}(t)=\int_0^t\|\u_\tau\|_{H^1(\rd\rho_\tau)}\,\rd\tau
$$
is absolutely continuous, nondecreasing, and
$$
\mathtt{s}(0)=0,\qquad \mathtt{s}(1)=\int_0^1\|\u_\tau\|_{H^1(\rd\rho_\tau)}\,\rd\tau=:L.
$$
The left-continuous inverse
$$
[0,L]\ni s\mapsto \mathtt{t}(s):=\min\{t\in [0,1]:\,\mathtt{s}(t)=s\}
$$
is a monotone increasing function, and has therefore countably many jumps.
Denoting $\lambda_t:=\|\u_t\|_{H^1(\rd\rho_t)}$ and observing that $\frac{d}{dt}\mathtt{s}(t)=\lambda_t$ we see that the countable set of discontinuities of $\mathtt{t}(s)$ is precisely the image by $\mathtt{s}$ of the critical points $\{t\in [0,1]:\,\lambda_t=0\}$, which by countability has zero $\mathrm{d}s$ measure in $[0,L]$. As a consequence $\lambda_{\mathtt{t}(s)}$ is positive $\mathrm{d}s$ a.e. in $[0,L]$, and
$$
s\in [0,L]:\qquad \tilde{\rho}_s:=\rho_{\mathtt{t}(s)}
\quad\mbox{and}\quad
\tilde{\u}_s:=\frac{\u_{\mathtt{t}(s)}}{\|\u_{\mathtt{t}(s)}\|_{H^1(\rd\rho_{\mathtt{t}(s)})}}
$$
are well-defined and measurable in $s$ with of course
$$
\|\tilde \u_s\|_{H^1(\rd\tilde\rho_s)}=1\quad \mbox{a.e. }s\in [0,L].
$$
Exploiting the narrow continuity of $t\mapsto \rho_t$ and $\mathtt{s}(\mathtt{t}(s))=s$ it is easy to see that $s\mapsto \tilde{\rho}_s$ is narrowly continuous and connects $\rho_0,\rho_t$ in time $s\in [0,L]$. Furthermore one can check that $\partial_s\tilde{\rho}_s+\dive(\tilde\rho_s\nabla\tilde u_s)=\tilde\rho_s \tilde u_s$ in the sense of distributions $\mathcal{D}'((0,L)\times\R^d)$, which formally follows by the chain rule
\begin{multline*}
 \partial_s\tilde\rho_s  =\partial_s(\rho_{\mathtt{t}(s)})=\frac{d}{ds}\mathtt{t}(s)\partial_t \rho_{\mathtt{t}(s)}=\frac{1}{\frac{d}{dt}\mathtt{s}(t)|_{\mathtt{t}(s)}}\partial_t \rho_{\mathtt{t}(s)}\\
  =\frac{1}{\|\u_{\mathtt{t}(s)}\|_{H^1(\rd\rho_{\mathtt{t}(s)})}}\left(-\dive\left(\rho_{\mathtt{t}(s)}\nabla u_{\mathtt{t}(s)}\right)+\rho_{\mathtt{t}(s)} u_{\mathtt{t}(s)}\right) \\
 = -\dive\left(\tilde\rho_s\nabla \tilde u_s\right)+\tilde\rho_s\tilde u_s
\end{multline*}
(this can be made rigorous using change of variables and $\|u_{\mathtt{t}(s)}\|_{H^1(d\rho_{\mathtt{t}(s)})}>0$ a.e. $s\in [0,L]$, see e.g. \cite[Lemma 8.1.3]{AGS06}).\\
As a consequence $(\tilde{\rho}_s,\tilde{\u}_s)_{s\in [0,L]}$ is an admissible curve connecting $\rho_0,\rho_1$ with energy
$$
E[\tilde\rho;\tilde \u]=\int_0^L\|\tilde \u_s\|^2_{H^1(\rd\tilde\rho_s)}\mathrm{d}s=\int_0^L1\,\mathrm{d}s=L.
$$
Setting $(\o \rho_t,\o \u_t)_{t\in [0,1]}:=(\tilde\rho_{tL},L\tilde \u_{tL})_{t\in [0,1]}$ in order to connect in time $t\in [0,1]$, the energy finally scales according to Remark~\ref{rmk:time_scaling} as
\begin{align*}
E[\o\rho;\o \u]& =L.E[\tilde{\rho};\tilde \u]=L^2\\
&=\left(\int_0^1\|\u_t\|_{H^1(\rd\rho_t)}\mathrm{d}t\right)^2\leq \int_0^1\|\u_t\|^2_{H^1(\rd\rho_t)}\mathrm{d}t=E[\rho;\u]
\end{align*}
as desired.
By construction we have that $\|\\o u_t\|_{H^1(\rd\o\rho_t)}$ is constant in time, and because $\int_0^1 \|\o \u_t\|_{H^1(\rd\o\rho_t)}^2\mathrm{d}t=E[\o\rho;\o \u]$ we conclude that $\|\o \u_t\|_{H^1(\rd\o\rho_t)}^2=E[\o\rho;\o \u]$ a.e. $t\in (0,1)$ and the proof is complete.
\end{proof}

%%%%%%%%%%%%%%%%%%%%%%%%%%%%%%%%%%%%%
\subsection*{Lower-semicontinuous translation of the Hopf-Rinow theorem} 
\begin{lem}
\label{hrt}
Let a metric space $(X,\varrho)$ be a complete length space. Assume that there exists a $\varrho$-boundedly compact Hausdorff topology $\sigma$ on $X$ (i-e $\varrho$-bounded sequences contain $\sigma$-converging subsequences)  such that $\varrho$ is sequentially lower semicontinuous with respect to $\sigma$. Then $(X,\varrho)$ is a geodesic space. 
\end{lem}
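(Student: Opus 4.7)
The plan is to use the classical dyadic midpoint construction, where the $\sigma$-lower-semicontinuity of $\varrho$ will substitute for the local compactness that the original Hopf-Rinow theorem requires in the metric topology itself.

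Fix $x,y \in X$. The core step is to produce an exact $\varrho$-midpoint of $x$ and $y$. Since $(X,\varrho)$ is a length space, the almost midpoint criterion supplies for every $n\in \N$ a point $m_n \in X$ with
$$
\max\{\varrho(x,m_n),\varrho(m_n,y)\} \leq \tfrac{1}{2}\varrho(x,y) + \tfrac{1}{n}.
$$
In particular $\{m_n\}$ is $\varrho$-bounded, so the $\sigma$-boundedly-compact hypothesis yields a $\sigma$-convergent subsequence $m_{n_k} \overset{\sigma}{\to} m \in X$. The sequential lower semicontinuity of $\varrho$ with respect to $\sigma$ (applied to the constant sequence $x$ and to $m_{n_k}$, and likewise for $y$) then gives
$$
\varrho(x,m) \leq \liminf_{k\to\infty}\varrho(x,m_{n_k}) \leq \tfrac{1}{2}\varrho(x,y),\qquad \varrho(m,y)\leq \tfrac{1}{2}\varrho(x,y).
$$
Summing and invoking the triangle inequality $\varrho(x,y)\leq \varrho(x,m)+\varrho(m,y)$ forces equality in both estimates, so $m$ is an \emph{exact} midpoint of $x$ and $y$.

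Next, I would bootstrap this midpoint lemma dyadically. Setting $\gamma(0)=x$, $\gamma(1)=y$, $\gamma(1/2)=m$, and then applying the lemma to each of the pairs $(\gamma(0),\gamma(1/2))$ and $(\gamma(1/2),\gamma(1))$, and iterating, produces a map $\gamma : D \to X$ where $D \subset [0,1]$ is the set of dyadic rationals. A straightforward induction based on the midpoint property and the triangle inequality shows
$$
\varrho(\gamma(s),\gamma(t)) = |s-t|\,\varrho(x,y)\qquad \forall\, s,t \in D.
$$
In particular $\gamma$ is Lipschitz on $D$ with constant $\varrho(x,y)$, so by density of $D$ in $[0,1]$ and completeness of $(X,\varrho)$ it extends uniquely by continuity to a Lipschitz map $\gamma:[0,1]\to X$ satisfying the same identity for all $s,t\in [0,1]$. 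Such $\gamma$ is the desired constant-speed minimizing geodesic joining $x$ and $y$.

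The only non-routine point is the midpoint step: the classical Hopf-Rinow proof uses local compactness of the metric topology itself, which typically fails for the large infinite-dimensional spaces of interest here (such as $(\Mm,d)$). The sequential lower semicontinuity of $\varrho$ with respect to a coarser boundedly-compact topology $\sigma$ is exactly the ingredient that lets one promote a $\sigma$-limit of approximate midpoints to an actual $\varrho$-midpoint; once this passes, the dyadic bisection and completion arguments are entirely standard metric-space facts.
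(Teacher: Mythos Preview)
Your proof is correct and follows essentially the same approach as the paper: both argue by taking a sequence of almost midpoints (available in any length space), extracting a $\sigma$-convergent subsequence via the bounded-compactness hypothesis, and using the sequential lower semicontinuity of $\varrho$ together with the triangle inequality to show the limit is an exact midpoint. The only difference is that the paper then cites \cite[Theorem 2.4.16(1)]{Bur} for the standard fact that a complete metric space with midpoints is geodesic, whereas you write out the dyadic bisection and completion argument explicitly.
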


\begin{proof} Fix any two points $x,y\in X$. By \cite[Theorem 2.4.16(1)]{Bur}, it suffices to show that they admit a midpoint, i-e a point $z$ such that
\begin{equation*}
\label{mdp}
\varrho(x,y)= 2 \varrho(x,z)=2\varrho(z,y).
\end{equation*}
By \cite[Lemma 2.4.10]{Bur}, there exists a sequence $z_k$ of almost midpoints, i-e
\begin{equation*}
\label{amdp}
|\varrho(x,y)-2 \varrho(x,z_k)|\leq k^{-1},\quad |\varrho(x,y)-2 \varrho(y,z_k)|\leq k^{-1}.
\end{equation*} 
The sequence $\{z_k\}$ is $\varrho$-bounded, thus without loss of generality it $\sigma$-converges to some $z\in X$. Then $$2\varrho(x,z)\leq \lim_{k\to \infty} 2\varrho(x,z_k)= \varrho(x,y),$$
$$2\varrho(y,z)\leq \lim_{k\to \infty} 2\varrho(y,z_k)= \varrho(x,y).$$ But its is clear from the triangle inequality that the latter inequalities must be equalities. \end{proof}
%%%%%%%%%%%%%%%%%%%%%%%%%%%%%%%
%%%%%%%%%%%%%%%%%%%%%%%%%%%
\subsection*{A generalized Beckner inequality}
All the integrals below are implicitly computed with respect to the Lebesgue measure $\rd x$.
\begin{theo}
\label{th:eepi}
Let $\Omega$ in $\R^d$ be a bounded, connected, open domain, satisfying the 
cone property.
Let $m:\Omega\to \R$ be a Lipschitz function such that 
$\inf\limits_{x\in\Omega} m(x)>0$.
There exists a strictly increasing continuous 
scalar function $\Phi$ (depending merely on $\Omega$ and $m$) such that 
$\Phi(0)=0$  and
\begin{equation}
\label{e:eepi}
\Phi\left(\int_\Omega \rho\right)\int_\Omega |\rho-m|^2\leq 
\int_\Omega \rho|\rho-m|^2	+	\int_\Omega \rho|\nabla(\rho-m)|^2
\end{equation}
for every non-negative $\rho\in H^1\cap L^\infty(\Omega).$
\end{theo}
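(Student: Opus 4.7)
\medskip

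My plan is a compactness-and-rigidity argument. For each $M>0$ I would define the sharp constant
$$
\Psi(M):=\inf\left\{\frac{\int_\Omega\rho(|\rho-m|^2+|\nabla(\rho-m)|^2)}{\int_\Omega|\rho-m|^2}:\rho\in H^1\cap L^\infty,\,\rho\geq 0,\,\rho\not\equiv m,\,\int_\Omega\rho=M\right\},
$$
and reduce the statement to proving that $\Psi(M)>0$ for every $M>0$. Once positivity is in hand (and, better, a uniform positive lower bound on each compact subinterval $[a,b]\subset(0,\infty)$, which the same argument will deliver), the existence of a strictly increasing continuous minorant $\Phi\leq\Psi$ with $\Phi(0)=0$ is a routine truncation-and-mollification step.

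The heart of the matter is the positivity of $\Psi(M)$, which I would establish by contradiction. Suppose $\Psi(M)=0$; then there exist $\rho_n\geq 0$ with $\int_\Omega\rho_n=M$, setting $f_n:=\rho_n-m$ normalized by $\int_\Omega f_n^2=1$, and such that $\int_\Omega\rho_n(f_n^2+|\nabla f_n|^2)\to 0$. Since $\rho_n=f_n+m$ the sequence is bounded in $L^2$, so (up to subsequence) $\rho_n\rightharpoonup\rho^*$ in $L^2$ and $f_n\rightharpoonup f^*=\rho^*-m$. The quantitative gain will come from two identities: expanding $\int_\Omega\rho_nf_n^2=\int_\Omega\rho_n^3-2\int_\Omega\rho_n^2\,m+\int_\Omega\rho_n m^2$ together with $\|\rho_n\|_{L^2}\leq C$ produces a uniform bound on $\rho_n$ in $L^3$; meanwhile the reformulation $\rho_n\nabla f_n=\tfrac12\nabla(\rho_n^2)-\rho_n\nabla m$ together with $\|\rho_n\nabla f_n\|_{L^1}\leq\sqrt{M}\|\sqrt{\rho_n}\nabla f_n\|_{L^2}\to 0$ (and $\nabla m\in L^\infty$ from Lipschitz regularity of $m$) furnishes a uniform $W^{1,1}(\Omega)$ bound on $\rho_n^2$. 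Rellich-Kondrachov (using the cone property) then gives strong $L^1$ convergence of $\rho_n^2$ and pointwise a.e.\ convergence of $\rho_n$, and the $L^3$ bound promotes this to $\rho_n\to\rho^*$ strongly in $L^2$ via uniform integrability; in particular $\int_\Omega(f^*)^2=1$.

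It then remains to extract two rigidity properties of the limit. First, the pointwise a.e.\ limit of the nonnegative sequence $\rho_nf_n^2$ must vanish, giving $\rho^*(f^*)^2=0$ a.e., hence $\rho^*=m\,\mathbf{1}_A$ for some measurable $A\subset\Omega$. Second, passing to the limit in $\nabla(\rho_n^2)-2\rho_n\nabla m\to 0$ in $\mathcal{D}'(\Omega)$ produces the distributional identity $\nabla((\rho^*)^2)=2\rho^*\nabla m$; substituting $\rho^*=m\mathbf{1}_A$ and using the product rule this collapses to $m^2\nabla\mathbf{1}_A=0$ in $\mathcal{D}'(\Omega)$. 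Since $m\geq\inf_\Omega m>0$ and $\Omega$ is connected, this forces $A=\emptyset$ or $A=\Omega$ modulo null sets; the first case contradicts $\int_\Omega\rho^*=M>0$ while the second gives $f^*\equiv 0$ and contradicts $\int_\Omega(f^*)^2=1$.

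The main obstacle will be upgrading the weak $L^2$ convergence of $\rho_n$ to strong convergence: the identification of $\rho^*$ as a sharp indicator-profile $m\mathbf{1}_A$ depends crucially on it, and obtaining it requires simultaneously squeezing a cubic moment out of the reaction term $\int\rho_nf_n^2$ and a $W^{1,1}$ bound on $\rho_n^2$ out of the weighted gradient $\sqrt{\rho_n}\nabla f_n$. It is precisely at this junction that the strict positivity and Lipschitz regularity of $m$ (both hypotheses of the theorem, together with the cone property of $\Omega$) enter in an essential way.
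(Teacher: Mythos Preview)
Your argument is sound and follows a genuinely different route from the paper. The paper proceeds constructively in three steps: first, for $m\equiv 1$ and $|\Omega|=1$ it invokes a generalized Beckner inequality from \cite{chainais13entropy} to control $\int|\rho-1|^2$ by $C_\Omega\int\rho|\nabla\rho|^2$; second, for general mass $\lambda=\int\rho$ it rescales $\rho_\lambda=\rho/\lambda$ and extracts the explicit function $\Phi(\lambda)=\min\bigl(\lambda,\lambda^2/(2C_\Omega)\bigr)$ by direct algebraic manipulation of the $L^2$ and $L^3$ norms; third, for non-constant $m$ it reduces to the constant case through the change of variable $\rho\mapsto\rho/m$ and a Young-type perturbation estimate on the cross term $\nabla\rho_m\cdot\nabla m$.

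Your compactness--contradiction approach is self-contained (no external Beckner inequality is imported) and handles variable $m$ directly rather than by perturbation, at the price of a non-constructive $\Phi$; the paper's proof is shorter once the Beckner input is granted and delivers an explicit rate. Both proofs use the strict positivity and Lipschitz regularity of $m$ in essential but different places. One small point worth tightening in your outline: in the final rigidity step the product rule $\nabla(m^2\mathbf 1_A)=2m\nabla m\,\mathbf 1_A+m^2\nabla\mathbf 1_A$ is not immediate, since $m$ is only Lipschitz and $\nabla\mathbf 1_A$ is a priori merely a distribution. A clean way around this is to observe that $\mathbf 1_A=(\rho^*)^2/m^2$ is the product of $(\rho^*)^2\in W^{1,1}$ (which you have already established, with $\nabla((\rho^*)^2)=2\rho^*\nabla m\in L^1$) and $1/m^2\in W^{1,\infty}$, hence lies in $W^{1,1}$; the Leibniz rule is then fully justified and gives $\nabla\mathbf 1_A=0$ as an $L^1$ function on the connected domain $\Omega$, forcing $\mathbf 1_A$ to be constant.
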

\begin{proof}
\emph{Step 1}. Without loss of generality, we may rescale the problem so that $\Omega$ has Lebesgue measure $1$.
Assume first that $m(x)\equiv 1$.
Under these assumptions, the generalized Beckner inequality \cite[Lemma 4]{chainais13entropy} with $p=3/2$, $q=4/3$ implies
\begin{equation} 
\label{e:eepi1}
\|\rho\|_{L^2(\Omega)}\left[ \int_\Omega 
\rho^2-\left(\int_\Omega \rho\right)^2\right] \leq C_\Omega\int_\Omega 
\rho|\nabla \rho|^2
\end{equation}
for every non-negative $\rho\in H^1\cap L^\infty(\Omega)$, where $C_\Omega$ depends only on $\Omega$.
Let $\lambda=\int_\Omega \rho \geq 0$.
If $\lambda=0$ then \eqref{e:eepi} trivially holds with $\Phi(0)=0$. 
If $\lambda=1$, then $\|\rho\|_{L^2(\Omega)}\geq 1$, 
and \eqref{e:eepi1} yields
\begin{equation*}
\label{e:eepi2}
\int_\Omega 
|\rho-1|^2	 \leq C_\Omega\int_\Omega \rho|\nabla \rho|^2,
\end{equation*}
which is even stronger than  \eqref{e:eepi}. 

\emph{Step 2}.
We now consider the case of arbitrary $\lambda>0$.
We set 
$\Phi(\lambda):=\min\left(\lambda, 
\frac{\lambda^2}{2C_\Omega}\right)$.
Let $\rho_\lambda=\rho/\lambda$.
Since we rescaled $|\Omega|=1$, by 
the H\"{o}lder inequality we have that
$$
\|\rho_\lambda\|_{L^3(\Omega)}\geq \|\rho_\lambda\|_{L^2(\Omega)}\geq 
\|\rho_\lambda\|_{L^1(\Omega)}=1,
$$
so
$$
\|\rho_\lambda\|_{L^3(\Omega)}^3\geq \|\rho_\lambda\|_{L^2(\Omega)}^2.
$$
Then we discover that 
\begin{multline*}
\int_\Omega |\rho-1|^2	=\int_\Omega |\lambda 
\rho_\lambda-1|^2	 =\lambda^2\left(\int_\Omega |\rho_\lambda|^2\right)-2\lambda +1\\
=
(\lambda^2-2\lambda)\left(\int_\Omega \rho_\lambda^2\right) +1 + 2\lambda  \int_\Omega 
|\rho_\lambda-1|^2\\
\leq \lambda^2\left(\int_\Omega \rho_\lambda^3\right) -2\lambda\left(\int_\Omega 
\rho_\lambda^2\right) +1 + 2\lambda  \int_\Omega |\rho_\lambda-1|^2\\
=\int_\Omega 
\rho_\lambda|\lambda \rho_\lambda-1|^2  + 2\lambda  \int_\Omega |\rho_\lambda-1|^2
\leq 
\int_\Omega  \rho_\lambda |\lambda \rho_\lambda-1|^2	  + 2\lambda  C_\Omega\int_\Omega 
\rho_\lambda|\nabla \rho_\lambda|^2\\
\leq\frac 1 {\Phi(\lambda)} \int_\Omega  \lambda 
\rho_\lambda |\lambda \rho_\lambda-1|^2	  + \frac {\lambda^3} {\Phi(\lambda)}\int_\Omega 
\rho_\lambda|\nabla \rho_\lambda|^2\\
= \frac 1 {\Phi\left(\int_\Omega 
\rho\right)}\left[ \int_\Omega \rho|\rho-1|^2	+	\int_\Omega \rho|\nabla 
\rho|^2\right].
\end{multline*}

\emph{Step 3}. We will now treat the case of generic $m(x)>0$ by a perturbation argument.
Throughout this step we denote by $C_m$ a generic constant depending only on $m$.
Let $\rho_m(x)=\rho(x)/m(x)$.
Then in the light of the previous step we see that 
\begin{multline*}
\Phi\left(\int_\Omega \rho\right)\int_\Omega |\rho-m|^2
=\Phi\left(\int_\Omega m\rho_m\right)\int_\Omega m^2|\rho_m-1|^2\\
\leq C_m\Phi\left(\int_\Omega \rho_m\right)	\int_\Omega |\rho_m-1|^2\\
\leq C_m\left[ \int_\Omega \rho_m|\rho_m-1|^2	+	\int_\Omega \rho_m|\nabla \rho_m|^2\right],
\end{multline*}
and it suffices to show that the latter sum does not exceed
$$
C_m\left[ \int_\Omega \rho|\rho-m|^2	+	\int_\Omega \rho|\nabla (\rho-m)|^2\right].
$$ 
Let $\alpha\in(0,1)$ be such that
$$
2\left(\frac 1 \alpha -1\right) \sup_{x\in\Omega}\left\{m(x)|\nabla m(x)|^2\right\} = \inf_{x\in\Omega}\left\{m^3(x)\right\}.
$$
Then we find that
\begin{multline*}
\int_\Omega \rho_m|\rho_m-1|^2	+\int_\Omega \rho_m|\nabla \rho_m|^2\\
\leq C_m\left[\frac 1 2\int_\Omega m^3\rho_m|\rho_m-1|^2	+	(1-\alpha)\int_\Omega m^3 \rho_m|\nabla \rho_m|^2\right]\\
\leq C_m\Big[\int_\Omega m^3 \rho_m |\rho_m-1|^2	 +(1-\alpha)\int_\Omega m^3 \rho_m |\nabla \rho_m|^2\\ 
+\left(1-\frac 1 \alpha\right)\int_\Omega m 	\rho_m|\rho_m-1|^2	|\nabla m|^2\Big]\\
\leq C_m\Bigg[\int_\Omega m^3 \rho_m |\rho_m-1|^2	 +\int_\Omega m^3 \rho_m |\nabla \rho_m|^2\\
+2\int_\Omega m^2 \rho_m |\rho_m-1|\nabla m\cdot\nabla \rho_m
+\int_\Omega m \rho_m |\rho_m-1|^2|\nabla m|^2\Bigg]\\
= C_m\left[\int_\Omega m^3 \rho_m |\rho_m-1|^2	+	\int_\Omega m \rho_m \left|(m \nabla \rho_m+\rho_m\nabla m)	-\nabla m\right|^2\right]\\
=C_m\left[ \int_\Omega \rho|\rho-m|^2	 +	\int_\Omega \rho|\nabla (\rho-m)|^2\right].
\end{multline*}
\end{proof}

%%%%%%%%%%%%%%%%%%%%%%%%%%%%%%%%%%%%%%%%%%%%%%%%%%%%%%%

%%%%%%%%%%%%%%%%%%%%%%%%%%%%%%%%%%%%%%%%%%%%%%%%%%%%%%%

%

\end{document}